\newcommand{\Real}{\mathbb R}
\newcommand{\norm}[1]{\|#1\|}
\newcommand{\abs}[1]{\left\vert#1\right\vert}
\newcommand{\set}[1]{\left\{#1\right\}}
\newcommand{\grad}{\nabla}
\newcommand{\mollify}[1]{ \mathcal{J}_\epsilon #1 }
\newcommand{\mollifyj}[1]{\mathcal{J}_{\epsilon_j} #1 }
\newcommand{\conv}[2]{#1 \ast #2}
\newcommand{\D}{D}
\newcommand{\K}{\mathcal{K}}
\newcommand{\ineqtext}[1]{ ^{\text{\tiny #1}}}
\newcommand{\wknorm}[2]{\norm{#1}_{L^{#2,\infty}}}
\newcommand{\wkspace}[1]{L^{#1,\infty}}
\newcommand{\F}{\mathcal{F}}
\newcommand{\reg}[1]{#1^\epsilon}
\newcommand{\energy}{\mathcal{F}}
\newcommand{\kernel}{\mathcal{K}}
\newcommand{\intt}{\int_{D_T}}
\newcommand{\ball}{B_n}
\newcommand{\pd}{D_T}
\newcommand{\pdRd}{\Real^d_T}
\newtheorem{theorem}{Theorem}
\theoremstyle{remark}
\newtheorem{remark}{Remark}
\theoremstyle{theorem}
\newtheorem{proposition}{Proposition}
\theoremstyle{definition}
\newtheorem{definition}{Definition}
\theoremstyle{lemma}
\newtheorem{lemma}{Lemma}
\begin{document}

\title{Local and Global Well-Posedness for Aggregation Equations and Patlak-Keller-Segel Models with Degenerate Diffusion}

\author{Jacob Bedrossian \footnote{\textit{jacob.bedrossian@math.ucla.edu}, University of California-Los Angeles, Department of Mathematics} and Nancy Rodr\'iguez\footnote{\textit{nrodriguez@math.ucla.edu}, University of California-Los Angeles, Department of Mathematics} and Andrea L. Bertozzi\footnote{\textit{bertozzi@math.ucla.edu}, University of California-Los Angeles, Department of Mathematics}}

\maketitle

\begin{abstract}
Recently, there has been a wide interest in the study of aggregation equations and Patlak-Keller-Segel (PKS) models for chemotaxis with degenerate diffusion.  The focus of this paper is the unification and generalization of the well-posedness theory of these models.  
We prove local well-posedness on bounded domains for dimensions $d\geq 2$ and in all of space for $d\geq 3$, the uniqueness being a result previously not known for PKS with degenerate diffusion. We generalize the notion of criticality for PKS and show that subcritical problems are globally well-posed.  For a fairly general class of problems, we prove the existence of a critical mass which sharply 
divides the possibility of finite time blow up and global existence. Moreover, we compute the critical mass for fully general problems and show that solutions with smaller mass exists globally. For a class of supercritical problems we prove finite time blow up is possible for initial data of arbitrary mass.  
\end{abstract}

\section{Introduction}
Nonlocal aggregation phenomena have been studied in a wide variety of biological applications such as migration patterns in ecological systems \cite{Bio,Topaz,Milewski,GurtinMcCamy77,Burger07} and
Patlak-Keller-Segel (PKS) models of chemotaxis \cite{Filbert,Patlak,Herrero2,KS,Lapidus}.  
Diffusion is generally included in these models to account for the dispersal of organisms.
Classically, linear diffusion is used, however recently, there has been a widening interest in models with degenerate diffusion to include over-crowding effects \cite{Topaz,Burger07}. 
The parabolic-elliptic PKS is the most widely studied model for aggregation, where the nonlocal effects are modeled by convolution with the Newtonian or Bessel potential. On the other hand, in population dynamics, the nonlocal effects are generally modeled with smooth, fast-decaying kernels. However, all of these models are describing 
the same mathematical phenomenon: the competition between nonlocal aggregation and diffusion. 
For this reason, we are interested in unifying and extending the local and global well-posedeness theory of general aggregation models with degenerate diffusion of the form
\begin{subequations}\label{AE}
\begin{align}
u_t +  \grad \cdot \left(u\vec{v} \right) = \Delta A(u) &\quad \text{in}\; [0,T)\times D, \label{PD}\\
\vec{v} = \conv{\grad \K}{u} \label{CG}. 
\end{align}
\end{subequations}
Mathematical works most relevant this paper are those with degenerate diffusion \cite{BertozziSlepcev10,SugiyamaDIE06,SugiyamaADE07,SugiyamaDIE07,Blanchet09,KowalczykSzymanska08,Kowalczyk05,CalvezCarrillo06} and those from the classical PKS literature \cite{JagerLuckhaus92,Dolbeault04,BlanchetEJDE06,Blanchet08}. See also \cite{KarchSuzuki10}. \\

\noindent
Existence theory is complicated by the presence of degenerate diffusion and singular kernels such as the Newtonian potential.
Bertozzi and Slep\v{c}ev in \cite{BertozziSlepcev10} prove existence and uniqueness of models with general diffusion but restrict to non-singular kernels.
Sugiyama \cite{SugiyamaDIE07} proved local existence for models with power-law diffusion and the Bessel potential for the kernel, but uniqueness of solutions was left open. 
We extend the work of \cite{BertozziSlepcev10} to prove the local existence of \eqref{AE} with degenerate diffusion and singular kernels including the Bessel and Newtonian potentials.
The existing work on uniqueness of these problems included a priori regularity assumptions \cite{KowalczykSzymanska08} or the use of entropy solutions \cite{Burger07} (see also \cite{Carrillo99}). 
The Lagrangian method introduced by Loeper in \cite{LoeperVP06} estimates the difference of weak solutions in the Wasserstein distance and is very useful for inviscid problems or problems with linear diffusion \cite{LoeperSG06,BertozziLaurentRosado10,CarrilloRosado10}. 
In the presence of nonlinear diffusion, it seems more natural to approach uniqueness in $H^{-1}$, where the diffusion is monotone (see \cite{VazquezPME}). This is the approach taken in \cite{BertozziBrandman10,BertozziSlepcev10}, which we extend to handle singular kernels such as the Newtonian potential, proving uniqueness of weak solutions with no additional assumptions, provided the domain is bounded 
or $d \geq 3$.
The main difference is the use of more refined estimates to handle the lower regularity of $\grad K \ast u$, similar to the traditional proof of uniqueness of $L^1 \cap L^\infty$-vorticity solutions to the 2D Euler equations \cite{Yudovich63,MajdaBertozzi} and a similar proof of the uniqueness of $L^1 \cap L^\infty$ solutions to the Vlasov-Poisson equation \cite{Robert97}. \\

\noindent
There is a natural notion of criticality associated with this problem, which roughly corresponds to the balance between the aggregation and diffusion. 
For problems with homogeneous kernels and power-law diffusion, $\K = c\abs{x}^{2-d}$ and $A(u) = u^m$, a simple scaling heuristic suggests that these forces are in balance if $m = 2-2/d$ \cite{Blanchet09}.
If $m > 2-2/d$ then the problem is subcritical and the diffusion is dominant. On the other hand, if $m < 2-2/d$ then the problem is supercritical and the aggregation is dominant. For the PKS with power-law diffusion, Sugiyama showed global existence for subcritical problems and that finite
time blow up is possible for supercritical problems \cite{SugiyamaDIE07,SugiyamaADE07,SugiyamaDIE06}. We extend this notion of criticality to general problems by observing that only the behavior of the solution at high concentrations will divide finite time blow up from global existence (see Definition \ref{def:criticality}). We show global well-posedness for subcritical problems and finite time blow up for certain supercritical problems. \\

\noindent
If the problem is critical, it is well-known that in PKS
there exists a critical mass, and solutions with larger mass can blow up in finite time \cite{BlanchetEJDE06,JagerLuckhaus92,Biler06,Dolbeault04,Blanchet08,Calvez,Blanchet09,SugiyamaDIE06,SugiyamaADE07,CalvezCarrillo06}. For linear diffusion, the same critical mass has been identified for the Bessel and Newtonian potentials \cite{BlanchetEJDE06,Calvez};
however for nonlinear diffusion, the critical mass has only been identified for the Newtonian potential \cite{Blanchet09}. 
In this paper we extend the free energy methods of \cite{Blanchet09,Dolbeault04,CalvezCarrillo06,Blanchet08} to estimate the critical mass for a wide range of kernels and nonlinear diffusion, which include these known results. For a smaller class of problems, 
including standard PKS models, we show this estimate is sharp. \\

\noindent
The problem \eqref{AE} is formally a gradient flow with respect to the 
Euclidean Wasserstein distance for the \emph{free energy} 
\begin{align}
\F(u(t)) & = S(u(t)) - \mathcal{W}(u(t)), \label{def:FreeEnergy}
\end{align}
where the \emph{entropy} $S(u(t))$ and the \emph{interaction energy} $\mathcal{W}(u(t))$ are given by
\begin{align*}
S(u(t)) & = \int \Phi(u(x,t)) dx, \\ 
\mathcal{W}(u(t)) & = \frac{1}{2}\int\int u(x,t)\K(x-y)u(y,t) dx dy.
\end{align*}
For the degenerate parabolic problems we consider, the \emph{entropy density} $\Phi(z)$ is a strictly convex function satisfying
\begin{equation}
\Phi^{\prime\prime}(z) = \frac{A^\prime(z)}{z}, \;\;\; \Phi^\prime(1) = 0, \;\;\; \Phi(0) = 0. \label{def:G}
\end{equation}
See \cite{CarrilloEntDiss01} for more information on these kinds of entropies. 
Although there is a rich theory for gradient flows of this general type when the 
kernel is regular and $\lambda$-convex \cite{McCann97,AmbrosioGigliSavare,CarrilloDiFrancesco09} 
the kernels we consider here are more singular and the notion of displacement convexity introduced in \cite{McCann97} no longer holds.
For this reason, the rigorous results of the gradient flow theory are not directly applicable, however, certain aspects may be recovered, such as the use of steepest descent schemes \cite{BlanchetCalvezCarrillo08,BlanchetCarlenCarrillo10}.  
Moreover, the free energy \eqref{def:FreeEnergy} is still the important dissipated quantity in the global existence and finite time blow up arguments.
The free energy has been used by many authors for the same purpose, see for instance \cite{SugiyamaDIE06,BlanchetEJDE06,CalvezCarrillo06,Blanchet09,BertozziLaurent07,Blanchet08}. For the remainder of the paper we only consider initial data with finite free energy, although the local existence arguments
may hold in more generality. \\ 

\noindent
There is a vast literature of related works on models similar to \eqref{AE}. For literature on PKS we refer the reader to the review articles \cite{Hortsmann,HandP}; see also \cite{HandP2,Rosado,Calvez} for parabolic-parabolic Keller-Segel systems.
 For the inviscid problem, see the recent works of \cite{Laurent07,BertozziLaurent07,BertozziBrandman10,BertozziLaurentRosado10,CarrilloDiFrancesco09}. For a study of these equations with fractional linear diffusion see \cite{LiRodrigo09,LiRodrigoAM09,Biler09}.
When the diffusion is sufficiently nonlinear and the kernel is in $L^1$, \eqref{AE} may be written as a regularized interface problem, a notion studied in \cite{Slepcev08}. Critical mass behavior is also a property of other related critical PDE, such as the marginal unstable thin film equation \cite{Witelski04,BertozziPugh98} and critical nonlinear Schr\"odinger equations
\cite{Weinstein83, KillipVisanClay}. \\

\noindent
{\it Outline of Paper}.  In Section \S\ref{sec:DefnAssump} we state the relevant definitions and notation.  Furthermore, we give a summary of the main results but reserve the proofs for subsequent sections.  In Section \S\ref{sec:Unique} we prove the uniqueness result.  Local existence is proved in Section \S\ref{sec:LocExist}.  The first result is proved on bounded domains in $d\geq 2$ and the second is proved on all space for $d\geq 3$.  In Section \S\ref{sec:ContThm} we prove a continuation theorem.  The global existence results are proved in Section \S\ref{sec:GE}.
Finally, in Section \S\ref{sec:FTBU} we prove the finite time blow up results. 

\subsection{Definitions and Assumptions} \label{sec:DefnAssump}
We consider either $\D = \Real^d$ with $d \geq 3$ or $\D$ smooth, bounded and convex with $d \geq 2$, in which case we impose no-flux conditions
\begin{equation}
(-\nabla A(u) + u \nabla \mathcal{K}\ast u) \cdot \nu = 0 \text{ on } \partial D \times [0,T), \label{cond:no_flux}
\end{equation}
where $\nu$ is the outward unit normal to $\D$. We neglect the case $D = \Real^2$ for technicalities introduced by 
the logarithmic potential. \\

\noindent
We denote $D_T := (0,T) \times D$. We also denote $\norm{u}_{p} := \norm{u}_{L^p(D)}$ where $L^p$ is the standard Lebesgue space. 
We denote the set $\set{u > k} := \set{x\in D: u(x) > k}$, if $S \subset \Real^d$ then $\abs{S}$ denotes the Lebesgue measure and $\mathbf{1}_{S}$ denotes the standard characteristic function. In addition, we use $\int f dx := \int_D f dx$, and only indicate the domain of integration 
where it differs from $D$. 
We also denote the weak $L^p$ space by $L^{p,\infty}$ and the associated quasi-norm 
\begin{equation*}
\wknorm{f}{p} = \left( \sup_{\alpha > 0} \alpha^p \lambda_f(\alpha) \right)^{1/p},
\end{equation*}
where $\lambda_f(\alpha) = \abs{\set{f > \alpha}}$ is the distribution function of $f$. 
Given an initial condition $u(x, 0)$ we denote its mass by $\int u(x, 0) dx= M$. In formulas we use the notation $C(p,k,M,..)$ to denote a generic constant, which may be different from line to line or even term to term in the same computation. 
In general, these constants will depend on more parameters than those listed, for instance those associated with the problem such as $\K$ and the dimension but these
dependencies are suppressed.
We use the notation $f \lesssim_{p,k,...} g$ to denote $f \leq C(p,k,..)g$ where again, dependencies that are not relevant are suppressed. 
 \\

\noindent
We now make reasonable assumptions on the kernel which include important cases of interest, such as when $\K$ is the fundamental solution of an elliptic PDE. 
To this end we state the following definition. 

\begin{definition}[Admissible Kernel] \label{def:admK}
We say a kernel $\K$ is \emph{admissible} if $\K \in W^{1,1}_{loc}$ and the following holds:
\begin{itemize}
\item[\textbf{(R)}] $\K \in C^3\setminus\set{0}$.
\item[\textbf{(KN)}] $\K$ is radially symmetric, $\K(x) = k(\abs{x})$ and $k(\abs{x})$ is non-increasing.
\item[\textbf{(MN)}] $k^{\prime\prime}(r)$ and $k^\prime(r)/r$ are monotone on $r \in (0,\delta)$ for some $\delta > 0$. 
\item[\textbf{(BD)}] $\abs{D^3\K(x)} \lesssim \abs{x}^{-d-1}$. 
\end{itemize}
\end{definition}
\noindent
This definition ensures that the kernels we consider are radially symmetric, non-repulsive, reasonably well-behaved at the origin, and have second derivatives which define bounded distributions on $L^p$ for $1 < p < \infty$ (see Section \S\ref{sec:PropAdmKern}). 
These conditions imply that if $\kernel$ is singular, the singularity is restricted to the origin.  Note also, that the Newtonian and Bessel potentials are both admissible for all dimensions $d \geq 2$; hence, the PKS and related models are included in our analysis. \\

\noindent
We now make precise what kind of nonlinear diffusion we are considering. 

\begin{definition}[Admissible Diffusion Functions] \label{def:admDiff}
We say that the function $A(u)$ is an admissible diffusion function if: 
\begin{itemize}
\item[\textbf{(D1)}] $A\in C^1([0,\infty))\;\text{with}\;A'(z)>0\;\text{for}\;z\in (0,\infty)$.
\item[\textbf{(D2)}] $A'(z)>c\;\text{for}\;z > z_c$ for some $c,z_c > 0$. 
\item[\textbf{(D3)}] $\int_0^1 A^\prime(z)z^{-1} dz < \infty$. 
\end{itemize}
\end{definition}

\noindent
This definition includes power-law diffusion $A(u) = u^m$ for $m > 1$. Note that \textbf{(D3)} requires the diffusion to be degenerate at $u = 0$, however it is permitted to behave linearly at infinity. 
Furthermore, on bounded domains condition {\bf (D3)} can be relaxed without any significant modification to the methods.  Following \cite{BertozziSlepcev10}, the notions of weak solution are defined separately for bounded and unbounded domains.

\begin{definition}[Weak Solutions on Bounded Domains] \label{def:WSBD}
Let $A(u)$ and $\kernel$ be admissible, and $u_0(x)\in L^\infty(D)$ be non-negative.  A non-negative function $u:[0,T] \times D \rightarrow [0,\infty)$ is a weak solution to \eqref{AE} if $u\in L^\infty(D_T)$, $A(u) \in L^2(0,T,H^1(D))$, $u_t \in L^2(0,T,H^{-1}(D))$ and 
\begin{align}\label{WF2}
\int_0^T\int u\phi_t\; dxdt = \int u_0(x)\phi(0,x)dx + \int_0^T\int (\nabla A(u) - u\nabla \kernel \ast u) \cdot \nabla \phi\; dxdt,
\end{align}
for all $\phi\in C^{\infty}(\overline{D_T})$ such that $\phi(T)=0$.   
\end{definition}
\noindent It follows that $u\grad \K \ast u \in L^2(D_T)$; therefore, definition \ref{def:WSBD} is equivalent to the following,
\begin{align}\label{WF1}
\left\langle u_t(t), \phi\right\rangle = \int \left(-\nabla A(u) + u\nabla \kernel \ast u\right)\cdot \nabla \phi\; dx,
\end{align}
\noindent  for all test functions $\phi \in H^1$ for almost all $t\in [0,T]$.  Above $\left\langle\cdot ,\cdot \right\rangle$ denotes the standard dual pairing between $H^1$ and $H^{-1}$.   Similarly for $\Real^d$ we define the following notion of weak solution as in \cite{BertozziSlepcev10}.

\begin{definition}[Weak Solution in $\Real^d$, $d\geq 3$] \label{def:WSRD}
Let $A$ and $\kernel$ be admissible, and $u_0\in L^\infty(\Real^d)\cap L^1(\Real^d)$ be non-negative.  A function $u:[0,T] \times \Real^d \rightarrow [0,\infty)$ is a weak solution of \eqref{AE} if $u\in L^\infty((0,T)\times\Real^d)\cap L^\infty(0,T,L^1(\Real^d))$, $A(u)\in L^2(0,T,\dot{H}^1(\Real^d))$, $u\conv{\grad K}{u} \in L^2(D_T)$, $u_t\in L^2(0,T,\dot{H}^{-1}(\Real^d))$, and for all test functions $\phi \in \dot{H}^{1}(\Real^d)$ for a.e $t\in [0,T]$ \eqref{WF1} holds.  
\end{definition}

\noindent
We show below (Theorem \ref{thm:Uniqueness}) that weak solutions satisfying Definition \ref{def:WSBD} or \ref{def:WSRD} are in fact unique. Moreover, we show the unique weak solution satisfies the energy dissipation inequality (Proposition \ref{prop:EnrDiss}),
\begin{align}\label{EnrDiss}
\energy(u(t))+\int_0^t\int \frac{1}{u}\left|A'(u)\nabla u -u\nabla\kernel\ast u\right|^2dxdt \leq \energy(u_0(x)).  
\end{align} 
As \eqref{EnrDiss} is important for the global theory, one could also refer to these solutions as free energy solutions, as is done in \cite{Blanchet09}. Uniqueness implies that there is no distinction between free energy solutions in \cite{Blanchet09} and weak solutions. \\

\noindent
Since \eqref{AE} conserves mass, the natural notion of criticality is with respect to the usual mass invariant scaling $u_\lambda(x) = \lambda^du(\lambda x)$. A simple heuristic for understanding how this scaling plays a role in the global existence is seen by examining the case of power-law diffusion and homogeneous kernel, $A(u) = u^m$ and $\K(x) = \abs{x}^{-d/p}$. Under this mass invariant scaling the free energy \eqref{def:FreeEnergy} becomes,
\begin{equation*}
\F(u_\lambda) = \lambda^{dm-d}S(u) - \lambda^{d/p}\mathcal{W}(u). 
\end{equation*}
As $\lambda \rightarrow \infty$, the entropy and the interaction energy are comparable if $m = (p+1)/p$. We should expect global existence if $m > (p+1)/p$, as the diffusion will dominate as $u$ grows, and possibly finite time blow up if $m < (p+1)/p$ as the aggregation will instead be increasingly dominant.
We consider inhomogeneous kernels and general diffusion, however for the problem of global existence, only the behavior as $u \rightarrow \infty$ will be important, in contrast to the problem of local existence. 
Noting that $\abs{x}^{-d/p}$ is, in some sense, the representative singular kernel in $L^{p,\infty}$ leads to the following definition. 
This critical exponent also appears indirectly in \cite{LionsCC84}. 
\begin{definition}[Critical Exponent]\label{def:mstar}
Let $d\geq 3$ and $\K$ be admissible such that $\K \in \wkspace{p}_{loc}$ for some $d/(d-2) \leq p < \infty$. Then
the \emph{critical exponent} associated to $\K$ is given by
\begin{equation*}
1 < m^\star = \frac{p+1}{p} \leq 2 - 2/d.
\end{equation*}
If $D^2\K(x) = \mathcal{O}(\abs{x}^{-2})$ as $x \rightarrow 0$, then we take $m^\star = 1$.  
\end{definition}

\begin{remark}
The case $m^\star = 1$ implies at worst a logarithmic singularity as $x \rightarrow 0$ and if $d = 2$ then all admissible kernels have $m^\star = 1$. 
\end{remark}

\noindent
Now we define the notion of criticality. It is easier to define this notion in terms of the quantity
$A^\prime(z)$, as opposed to using $\Phi(z)$ directly. 
\begin{definition}[Criticality] \label{def:criticality}
We say that the problem is \emph{subcritical} if
\begin{equation*}
\liminf_{z \rightarrow \infty} \frac{A^\prime(z)}{z^{m^{\star}-1}} = \infty,
\end{equation*}
\emph{critical} if
\begin{equation*}
0 < \liminf_{z \rightarrow \infty} \frac{A^\prime(z)}{z^{m^{\star}-1}} < \infty,
\end{equation*}
and \emph{supercritical} if
\begin{equation*}
\liminf_{z \rightarrow \infty} \frac{A^\prime(z)}{z^{m^{\star}-1}} = 0.
\end{equation*}
\end{definition}

\noindent
Notice that in the case of power-law diffusion, $A(u) = u^m$, subcritical, critical and supercritical respectively correspond to $m > m^\star, m = m^\star$ and $m < m^\star$. Moreover, in the case of the Newtonian or Bessel potential, $m^\star = 2-2/d$ and the critical diffusion exponent of the PKS models discussed in \cite{SugiyamaADE07,SugiyamaDIE06,Blanchet09} is
recovered.

\subsection{Summary of Results}
\noindent
The proof of local existence follows the work of Bertozzi and Slep\v{c}ev \cite{BertozziSlepcev10}, where \eqref{AE} 
is approximated by a family of uniformly parabolic problems. 
The primary new difficulty, due to the singularity of the kernel, is obtaining uniform a priori $L^\infty$ bounds, which is 
overcome here using the Alikakos iteration \cite{Alikakos}. Solutions are first constructed on bounded domains. 

\begin{theorem}[Local Existence on Bounded Domains, $d \geq 2$]\label{LEADD}
Let $A(u)$ and $\kernel(x)$ be admissible.  Let $u_0(x)\in L^\infty(D)$ be a non-negative initial condition, then \eqref{AE} has a weak solution $u$ on $[0,T]\times D$, for some $T>0$. Additionally, $u\in C([0,T];L^p(D))$ for $p\in [1,\infty)$.
\end{theorem}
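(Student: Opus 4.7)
The plan is to follow the Bertozzi--Slep\v{c}ev framework of approximating \eqref{AE} by a family of uniformly parabolic problems with smooth kernels, proving uniform a priori bounds, and extracting a weak solution by compactness. First I would regularize the problem by setting $A^\epsilon(z) = A(z) + \epsilon z$ (to make the diffusion uniformly parabolic) and $\kernel^\epsilon = \mathcal{J}_\epsilon \kernel$ (so that $\nabla \kernel^\epsilon \in C^\infty$ is bounded); with no-flux boundary conditions on the smooth bounded convex domain $D$ and the nonnegative bounded initial datum $u_0$, classical parabolic theory, in fact precisely the existence theory of \cite{BertozziSlepcev10} for smooth kernels, yields a unique nonnegative classical solution $u^\epsilon \in C^{2,1}(\overline{D_T})$ with $\int u^\epsilon\,dx = \int u_0\,dx$ conserved by the no-flux condition and the maximum principle.

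The main obstacle, as the authors indicate, is obtaining an $L^\infty$ bound on $u^\epsilon$ that is uniform in $\epsilon$, since the singular kernel prevents the direct $L^\infty$ estimate of \cite{BertozziSlepcev10}. I would use Alikakos iteration. Testing the equation against $(u^\epsilon)^{p-1}$ and integrating by parts gives
\begin{align*}
\frac{1}{p}\frac{d}{dt}\int (u^\epsilon)^p\,dx + (p-1)\int (u^\epsilon)^{p-2} A'(u^\epsilon)\abs{\nabla u^\epsilon}^2\,dx
= \frac{p-1}{p}\int (u^\epsilon)^p \Delta\kernel^\epsilon \ast u^\epsilon\,dx.
\end{align*}
By admissibility \textbf{(BD)}--\textbf{(R)} and the Calder\'on--Zygmund theory (proved in Section \ref{sec:PropAdmKern}), $\Delta \kernel$ defines a bounded operator on $L^q$ for every $1 < q < \infty$, so $\|\Delta \kernel^\epsilon \ast u^\epsilon\|_q \lesssim \|u^\epsilon\|_q$. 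Condition \textbf{(D2)} provides $A'(z) \geq c$ for $z$ larger than a fixed threshold, which together with \textbf{(D3)} gives a lower bound on the dissipation in terms of $\|\nabla (u^\epsilon)^{p/2}\|_2^2$ up to lower-order terms absorbable in $\|u^\epsilon\|_p$. Combining H\"older's inequality on the right-hand side with the Gagliardo--Nirenberg interpolation on $(u^\epsilon)^{p/2}$ closes a differential inequality at level $p$; iterating in $p \to \infty$ in the manner of Alikakos yields a bound $\|u^\epsilon(t)\|_\infty \leq C(\|u_0\|_\infty)$ on some interval $[0,T]$ with $T$ and $C$ depending only on the data and the admissibility constants, but not on $\epsilon$.

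Once the uniform $L^\infty$ bound is available, standard energy estimates, multiplying the equation by $A(u^\epsilon)$, yield uniform bounds on $A(u^\epsilon)$ in $L^2(0,T;H^1(D))$, and duality gives a uniform bound on $u^\epsilon_t$ in $L^2(0,T;H^{-1}(D))$. The Aubin--Lions lemma then produces a subsequence $u^{\epsilon_j} \to u$ strongly in $L^2(D_T)$ and a.e., and the uniform $L^\infty$ bound upgrades the convergence to strong in every $L^p(D_T)$, $p<\infty$, with $A(u^{\epsilon_j}) \rightharpoonup A(u)$ in $L^2(0,T;H^1(D))$.

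Finally I would pass to the limit in the weak formulation. For the drift term I need $u^{\epsilon_j}\nabla \kernel^{\epsilon_j}\ast u^{\epsilon_j} \to u \nabla \kernel \ast u$ in $L^1(D_T)$, which follows from strong $L^p$ convergence of $u^{\epsilon_j}$ combined with the weak-type singular integral bound $\|\nabla \kernel \ast f\|_{q^*} \lesssim \|f\|_q$ applied to $u^{\epsilon_j}-u$ (and controlling the $\mathcal{J}_{\epsilon_j}$-error using dominated convergence). The limit $u$ therefore satisfies Definition \ref{def:WSBD}. The continuity $u \in C([0,T];L^p(D))$ for $p \in [1,\infty)$ then follows by interpolation between $u \in L^\infty(D_T)$ and the continuity $u \in C([0,T];H^{-1}(D))$ obtained from $u_t \in L^2(0,T;H^{-1}(D))$.
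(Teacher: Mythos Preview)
Your overall architecture (regularize, uniform a priori bounds, compactness, pass to the limit) matches the paper, but two steps have real gaps.

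\textbf{The $L^\infty$ bound.} Testing against $(u^\epsilon)^{p-1}$ directly does not close because the $\epsilon$-independent dissipation coefficient is $A'(u)$, which by \textbf{(D2)} is bounded below only on $\{u>z_c\}$; the claim that \textbf{(D3)} makes the missing piece ``absorbable in $\|u^\epsilon\|_p$'' is not correct, since the uncontrolled part is a gradient term, not a zeroth-order one. The paper handles this by the J\"ager--Luckhaus truncation $u_k=(u-k)_+$: for $k$ large enough the support of $u_k$ lies in $\{A'(u)>c\}$, so the dissipation genuinely controls $\|\nabla u_k^{p/2}\|_2^2$. Moreover, the resulting differential inequality is superlinear in $\|u_k\|_p$ and only yields a bound on $[0,T_p]$ with $T_p$ depending on $p$; iterating $p\to\infty$ na\"ively could shrink the interval. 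The paper instead fixes a single $p>d$, bounds $\|u\|_p$ on $[0,T]$, uses this with Morrey's inequality (and Lemma~\ref{lem:CZ}) to bound $\|\nabla\mathcal K\ast u\|_\infty$ on $[0,T]$, and only then invokes the Alikakos-type iteration lemma (Lemma~\ref{dynamic}), which requires a bounded velocity field as input.

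\textbf{The continuity $u\in C([0,T];L^p)$.} The interpolation you propose is false: $C([0,T];H^{-1})\cap L^\infty(D_T)$ does not imply strong continuity in $L^p$. A counterexample along a time sequence is $u(t_n)=\sin(n\,\cdot)$ on a bounded interval, which converges to $0$ in $H^{-1}$, is uniformly bounded, but has constant $L^2$ norm. The paper's argument is more delicate: from $u_t\in L^2(0,T;H^{-1})$ one first gets weak-$L^2$ continuity, then shows $t\mapsto\int F(u(t))\,dx$ is continuous for the convex primitive $F(z)=\int_0^z A(s)\,ds$ via the pairing identity of Lemma~\ref{L6}, and finally uses the strict convexity of $F$ (Lemma~\ref{L9}) to upgrade weak to strong $L^2$ convergence; $L^p$ continuity for $p<\infty$ then follows by interpolation with the uniform $L^\infty$ bound.
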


\noindent
In dimensions $d \geq 3$ we also construct local solutions on $\Real^d$ by taking the limit of solutions on bounded domains. 
\begin{theorem}[Local Existence in $\Real^d$, $d \geq 3$]\label{LEADDRd}
Let $A(u)$ and $\kernel(x)$ be admissible.  Let $u_0(x)\in L^\infty(\Real^d)\cap L^1(\Real^d)$ be a non-negative initial condition, then \eqref{AE} has a weak solution $u$ on $\Real^d_T$, for some $T>0$.  Additionally, $u\in C([0,T];L^p(\Real^d))$ for all $1\leq p< \infty$ and the mass is conserved.
\end{theorem}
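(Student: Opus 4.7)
The plan is to construct the solution on $\Real^d$ as a limit of solutions on an exhausting sequence of bounded convex domains, relying on Theorem \ref{LEADD} together with uniform-in-domain estimates strong enough to pass to the limit. Fix a nested sequence of smooth, bounded, convex domains $B_n \nearrow \Real^d$ (e.g.,\ balls of radius $n$) and set $u_{0,n} := u_0 \mathbf{1}_{B_n}$. By Theorem \ref{LEADD}, for each $n$ there is a weak solution $u_n$ on $[0,T_n]\times B_n$ with this initial data. Since $\|u_{0,n}\|_\infty \leq \|u_0\|_\infty$ and $\|u_{0,n}\|_1 \leq M$ uniformly in $n$, the first task is to secure a common existence time $T>0$ and uniform-in-$n$ bounds.

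The central estimate is a uniform $L^\infty$ bound via Alikakos iteration: because $d\geq 3$ and $\K$ is admissible, $\grad \K$ lies in a weak $L^q$ space \emph{globally} on $\Real^d$ (for the Newtonian potential, $\grad \K \in \wkspace{d/(d-1)}$), so the constants appearing in the iteration depend only on $\|u_0\|_\infty$, $M$, and $\K$ itself; the no-flux condition on $\partial B_n$ eliminates all boundary contributions at each step. This yields $\|u_n(t)\|_\infty \leq C(\|u_0\|_\infty, M)$ for $t\in [0,T]$ with $T$ independent of $n$, while mass conservation on $B_n$ gives $\|u_n(t)\|_1 \leq M$. Testing \eqref{WF1} against $A(u_n)$ and combining admissibility of $A$ with the convolution bound for $\grad\K$ yields a uniform bound on $\grad A(u_n)$ in $L^2(B_n \times [0,T])$, and the equation itself then produces a uniform bound on $(u_n)_t$ in $L^2(0,T; H^{-1}(B_n))$.

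For each fixed $R > 0$ and all $n \geq R$, the restrictions $u_n|_{B_R}$ satisfy uniform $L^\infty \cap L^1$ bounds together with uniform bounds on $A(u_n|_{B_R})$ in $L^2(0,T;H^1(B_R))$ and on $(u_n|_{B_R})_t$ in $L^2(0,T; H^{-1}(B_R))$. The Aubin--Lions lemma combined with a diagonal argument in $R$ then extracts a subsequence (not relabelled) converging to some $u$ strongly in $L^p_{loc}(\pdRd)$ for every $1\leq p < \infty$, with $\grad A(u_n) \rightharpoonup \grad A(u)$ weakly in $L^2(\pdRd)$ after zero-extension of each $\grad A(u_n)$ outside $B_n$, and $\grad \K \ast u_n \to \grad \K \ast u$ in $L^p_{loc}$ via standard convolution estimates for $\grad\K$. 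These convergences suffice to pass to the limit in \eqref{WF1} tested against any compactly supported $\phi \in \dot{H}^1(\Real^d)$, and a density argument extends the identity to all $\phi\in \dot{H}^1(\Real^d)$.

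Mass conservation on $\Real^d$ follows from integrating \eqref{WF1} against smooth cutoffs $\chi_R$ increasing to $1$ and using the uniform integrability of $u$ afforded by the $L^\infty$ bound together with $u_0 \in L^1$, while the continuity $u\in C([0,T]; L^p(\Real^d))$ for $1 \leq p < \infty$ follows by interpolating the uniform $L^\infty$ bound against the $C([0,T]; \dot{H}^{-1})$ continuity implied by $u_t \in L^2(0,T;\dot{H}^{-1})$. The main obstacle I anticipate is obtaining the Alikakos $L^\infty$ estimate \emph{uniformly in $n$} in the presence of a singular kernel: this is precisely where $d\geq 3$ is essential, since only then does $\K$ belong to a \emph{global} weak $L^p$ space, allowing the interpolation and H\"older estimates that drive the iteration to close with constants independent of the domain; the case $d=2$ is excluded because the logarithmic potential fails this global weak $L^p$ property.
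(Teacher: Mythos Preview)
Your overall strategy---constructing the $\Real^d$ solution as a limit of solutions on balls $B_n$ with truncated data, then extracting via uniform $L^\infty$, $\nabla A(u_n)\in L^2$, $(u_n)_t\in H^{-1}$ bounds and a diagonal compactness argument---matches the paper's. Two points, however, need correction.

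First, your identification of where $d\geq 3$ enters is misplaced. The uniform-in-$n$ Alikakos $L^\infty$ estimate (Lemma~\ref{LinftyG}) already works for $d\geq 2$: the constants are uniform in $n$ because the Gagliardo--Nirenberg--Sobolev inequality (Lemma~\ref{lem:GNS}) depends only on the cone condition, which is the same for balls of any radius, not on the diameter. The paper's restriction to $d\geq 3$ on $\Real^d$ stems rather from the $\dot{H}^{-1}$ framework and the logarithmic behaviour of the Newtonian potential in $d=2$, not from the iteration.

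Second, and this is a genuine gap, your continuity argument fails. Interpolating a uniform $L^\infty$ bound against $C([0,T];\dot{H}^{-1})$ does \emph{not} yield strong $L^p(\Real^d)$ continuity: take $f_n(x)=\sin(nx_1)\phi(x)$ for a fixed bump $\phi$; this sequence is bounded in $L^\infty\cap L^1$ and tends to zero in $\dot{H}^{-1}$, yet $\norm{f_n}_2$ stays bounded away from zero. The paper proceeds differently: it first obtains $u\in C([0,T];L^1_{loc})$ (by the bounded-domain continuity argument applied locally), and then proves a tightness estimate by testing the equation against radial cutoffs $w(x/R)$. Using $A(u)\in L^1(\Real^d_T)$, which follows from \textbf{(D3)}, one obtains
\[
\int_{\abs{x}>R} u(t)\,dx \;\lesssim\; \int_{\abs{x}>R/2} u_0\,dx \;+\; \frac{M\norm{\vec{v}}_{L^1_t L^\infty_x}}{R} \;+\; \frac{\norm{A(u)}_{L^1(\Real^d_T)}}{R^2},
\]
uniformly in $t$, which upgrades local $L^1$ continuity to global. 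Your mass-conservation sketch has the same weakness: an $L^\infty$ bound together with $u_0\in L^1$ does not by itself give tightness of $u(t)$; one must use the equation to rule out mass escaping to infinity.
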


\noindent
As previously mentioned, the free energy is a dissipated quantity for weak solutions and is a key tool for the global theory.

\begin{proposition}[Energy Dissipation] \label{prop:EnrDiss}
Weak solutions to \eqref{AE} satisfy the energy dissipation inequality \eqref{EnrDiss} for almost all $t\geq 0$.   
\end{proposition}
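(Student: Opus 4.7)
The plan is to derive the energy identity as an equality for the smooth approximating solutions $u^\epsilon$ constructed in the proofs of Theorems \ref{LEADD} and \ref{LEADDRd}, and then pass $\epsilon \to 0$. The free energy terms will converge on the nose (using the $C([0,T]; L^p)$ convergence asserted in those theorems together with admissibility of $\K$), while the dissipation term will only survive as a one-sided inequality by weak lower semicontinuity, which yields the stated inequality. Uniqueness (Theorem \ref{thm:Uniqueness}) then guarantees the limit is the weak solution $u$.

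\textbf{Identity for approximants.} Writing $\energy^\epsilon$ for the free energy built from the regularized kernel $\K^\epsilon$, the regularized equation has the form $u^\epsilon_t = \nabla \cdot \bigl(u^\epsilon \nabla (\Phi'(u^\epsilon) - \K^\epsilon \ast u^\epsilon)\bigr)$, since $u^\epsilon \nabla \Phi'(u^\epsilon) = A'(u^\epsilon)\nabla u^\epsilon = \nabla A(u^\epsilon)$. Because $u^\epsilon$ is smooth and the no-flux condition \eqref{cond:no_flux} is built into the construction, $\Phi'(u^\epsilon) - \K^\epsilon \ast u^\epsilon$ is an admissible test function. A direct computation, using the chain rule for $\frac{d}{dt}S(u^\epsilon) = \int u^\epsilon_t \Phi'(u^\epsilon)\,dx$ and the symmetry $\K^\epsilon(x-y)=\K^\epsilon(y-x)$ for $\frac{d}{dt}\mathcal{W}^\epsilon(u^\epsilon) = \int u^\epsilon_t (\K^\epsilon \ast u^\epsilon)\,dx$, gives
\[
\energy^\epsilon(u^\epsilon(t)) + \int_0^t\!\!\int \frac{1}{u^\epsilon}\bigl|A'(u^\epsilon)\nabla u^\epsilon - u^\epsilon \nabla \K^\epsilon \ast u^\epsilon\bigr|^2 dx\,ds \;=\; \energy^\epsilon(u_0).
\]

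\textbf{Passage to the limit.} By local existence, $u^\epsilon \to u$ in $C([0,T];L^p(D))$ for all $p \in [1,\infty)$ and is uniformly bounded in $L^\infty$. Continuity of $\Phi$ and dominated convergence yield $S(u^\epsilon(t)) \to S(u(t))$ pointwise in $t$; admissibility of $\K$ (in particular $\K \in W^{1,1}_{\mathrm{loc}}$ with the growth from Definition \ref{def:admK}) together with Young's (or weak Young's) inequality gives $\K^\epsilon \ast u^\epsilon \to \K \ast u$ strongly in the Lebesgue spaces needed for $\mathcal{W}^\epsilon(u^\epsilon(t)) \to \mathcal{W}(u(t))$, and likewise $\energy^\epsilon(u_0) \to \energy(u_0)$. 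For the dissipation, note that the flux $j^\epsilon := -\nabla A(u^\epsilon) + u^\epsilon \nabla \K^\epsilon \ast u^\epsilon$ is bounded in $L^2(D_T)$ by the identity itself and the $L^\infty$ bound on $u^\epsilon$, so after extracting a subsequence $j^\epsilon \rightharpoonup j = -\nabla A(u) + u \nabla \K \ast u$ weakly in $L^2(D_T)$; the individual pieces converge because $A(u^\epsilon) \to A(u)$ in $L^2(0,T;H^1)$ from the local existence proof, and the product $u^\epsilon \nabla \K^\epsilon \ast u^\epsilon$ is controlled by the strong $L^p$ convergence of $u^\epsilon$. The integrand $(u,j) \mapsto |j|^2/u$ is the convex perspective of $j \mapsto |j|^2$ on $(0,\infty) \times \mathbb{R}^d$, hence the functional $(u,j) \mapsto \int_0^t \int |j|^2/u$ is jointly convex and lower semicontinuous under strong $L^1$ convergence of $u$ and weak $L^2$ convergence of $j$. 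Applying this to $(u^\epsilon, j^\epsilon)$ and combining with the convergence of the energy terms delivers the inequality.

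\textbf{Main obstacle.} The key difficulty is that the dissipation integrand contains the degenerate factor $1/u^\epsilon$, so one cannot conclude by naive weak convergence. The resolution is the perspective/convexity argument above, a classical integral-functional lower semicontinuity result (Ioffe-type). A secondary technical point, handled by admissibility \textbf{(BD)}, \textbf{(R)} and the uniform $L^\infty$ bound, is identifying the weak $L^2$ limit of $u^\epsilon \nabla \K^\epsilon \ast u^\epsilon$ with $u \nabla \K \ast u$ despite the singularity of $\K$ at the origin.
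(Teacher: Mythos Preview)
Your overall strategy---derive the identity for the regularized problem and pass to the limit, with the dissipation surviving only as an inequality by convex lower semicontinuity---is exactly the paper's approach, and your identification of the perspective-function structure $(u,j)\mapsto |j|^2/u$ is precisely what the paper invokes (via Otto's lemma, stated there as Lemma \ref{WLS}). However, there is a genuine gap in your ``identity for approximants'' step: the regularized equation \eqref{RAE} regularizes not only the kernel but also the diffusion, replacing $A$ by $A^\epsilon$ with $a_\epsilon'(z)\in[A'(z)+\epsilon,\,A'(z)+2\epsilon]$ so as to be uniformly parabolic. Consequently the exact identity at the $\epsilon$-level reads
\[
\energy_\epsilon(u^\epsilon(t)) + \int_0^t\!\!\int \frac{1}{u^\epsilon}\bigl|a_\epsilon'(u^\epsilon)\nabla u^\epsilon - u^\epsilon \nabla \mathcal{J}_\epsilon\K \ast u^\epsilon\bigr|^2 dx\,ds = \energy_\epsilon(u_0^\epsilon),
\]
where $\energy_\epsilon$ is built from the regularized entropy $\Phi_\epsilon$ (defined via $a_\epsilon'$), not from $\Phi$. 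Your displayed identity with $A'(u^\epsilon)$ and $\Phi$ is not what holds for the approximants. This forces an additional step you have omitted: one must show $\int\Phi_\epsilon(u^\epsilon)\,dx\to\int\Phi(u)\,dx$, which the paper does via the two-sided bound $\Phi(z)-2\epsilon\le\Phi_\epsilon(z)\le\Phi(z)+2\epsilon(z\ln z)_+$ together with the uniform $L^\infty$ bound and $L^1$ convergence.

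A second, smaller inaccuracy: the local existence proof does \emph{not} give $u^\epsilon\to u$ in $C([0,T];L^p)$; that regularity is established only for the limit $u$. What you actually have from precompactness is $u^{\epsilon_j}\to u$ in $L^p(D_T)$ (hence a.e.\ in $t$ along a further subsequence), which is why the statement is asserted only ``for almost all $t$''. Your dominated-convergence argument for $S(u^\epsilon(t))\to S(u(t))$ should therefore be phrased for a.e.\ $t$, not pointwise in $t$. With these two corrections---carrying $a_\epsilon',\Phi_\epsilon$ through the identity and downgrading the convergence mode---your argument coincides with the paper's.
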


\noindent
As in \cite{BertozziSlepcev10}, uniqueness holds on bounded, convex domains in $d \geq 2$ or on $\Real^d$ for $d \geq 3$. The proof also holds for more general diffusion (e.g. fast or strongly degenerate diffusion) or no diffusion at all.

\begin{theorem}[Uniqueness] \label{thm:Uniqueness}
Let $D \subset \Real^d$ for $d \geq 2$ be bounded and convex, then weak solutions to \eqref{AE} are unique. The conclusion also holds on $\Real^d$ for $d \geq 3$. 
\end{theorem}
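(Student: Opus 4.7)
The strategy is to perform an $H^{-1}$ energy estimate on the difference of two solutions, extending the approach of \cite{BertozziSlepcev10,BertozziBrandman10} to the more singular admissible kernels considered here. Let $u_1,u_2$ be two weak solutions with identical initial datum, set $w := u_1 - u_2$, and define $\psi$ by $-\Delta \psi = w$ with homogeneous Neumann condition on bounded convex $D$ (solvable since mass conservation yields $\int w\,dx=0$) or with decay at infinity on $\Real^d$ for $d\geq 3$, so that $\|w\|_{H^{-1}}^2 = \int|\nabla \psi|^2 dx = \langle w, \psi\rangle$. Using $\psi(t,\cdot)$ as a test function in \eqref{WF1} for each $u_i$ and subtracting gives
\begin{equation*}
\tfrac12 \frac{d}{dt}\|w\|_{H^{-1}}^2 = -\int (A(u_1) - A(u_2))(u_1 - u_2)\,dx + \int \bigl[u_1 \nabla \mathcal{K} \ast u_1 - u_2 \nabla \mathcal{K} \ast u_2\bigr]\cdot \nabla \psi\,dx.
\end{equation*}
The monotonicity of $A$ (assumption \textbf{(D1)}) renders the first term nonpositive, and it is discarded.

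For the drift, the plan is to use the symmetric splitting $u_1 \nabla \mathcal{K} \ast u_1 - u_2 \nabla \mathcal{K} \ast u_2 = \bar u\,\nabla \mathcal{K}\ast w + w\,\nabla \mathcal{K}\ast \bar u$ with $\bar u = (u_1+u_2)/2$. For the piece involving $w\,\nabla \mathcal{K}\ast \bar u$, substitute $w = -\Delta \psi$ and integrate by parts twice: the Neumann condition kills the first boundary term, producing
\begin{equation*}
\int \nabla \psi^T (D^2 \mathcal{K} \ast \bar u)\nabla \psi\,dx - \tfrac12 \int |\nabla \psi|^2\, \Delta \mathcal{K} \ast \bar u\,dx + \tfrac12 \int_{\partial D} |\nabla \psi|^2\, (\nabla \mathcal{K}\ast \bar u)\cdot\nu\,dS.
\end{equation*}
The surface integral has favorable sign precisely because of convexity of $D$ and monotonicity of $\mathcal{K}$ (\textbf{(KN)}): for $x\in\partial D$ and $y\in D$, $(x-y)\cdot \nu(x)\geq 0$, so $\nabla \mathcal{K}(x-y)\cdot \nu(x)\leq 0$, whence the surface integral is nonpositive and may be discarded. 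The piece involving $\bar u\,\nabla \mathcal{K}\ast w$ is handled analogously by integrating by parts in the convolution variable (again using the Neumann condition on $\psi$), obtaining $-\int \bar u\,(D^2 \mathcal{K}\ast \nabla \psi)\cdot \nabla \psi\,dx$ plus similar harmless boundary terms.

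Closing the inequality then relies on Calder\'on--Zygmund type bounds on $D^2\mathcal{K}$, which follow from admissibility conditions \textbf{(R)}, \textbf{(MN)}, \textbf{(BD)} (to be established in Section \ref{sec:PropAdmKern}): $\|D^2\mathcal{K}\ast f\|_p \leq C p\, \|f\|_p$ for $p\gg 1$, together with $\|\Delta \mathcal{K}\ast \bar u\|_\infty \lesssim 1$ (which holds since $\Delta \mathcal{K}$ splits as a Dirac plus a bounded/integrable remainder for admissible kernels of Newtonian/Bessel type). Applying H\"older, interpolating $\|\nabla \psi\|_{2p/(p-1)}$ between $\|\nabla \psi\|_2 = \|w\|_{H^{-1}}$ and $\|\nabla \psi\|_\infty \lesssim 1$ (the latter bound coming from $w \in L^1\cap L^\infty$ uniformly in $t$, via Theorems \ref{LEADD} and \ref{LEADDRd}), I arrive at the differential inequality
\begin{equation*}
\frac{d}{dt}\|w\|_{H^{-1}}^2 \leq C\|w\|_{H^{-1}}^2 + C p\, \|w\|_{H^{-1}}^{2(1 - 1/p)},\qquad p \gg 1,
\end{equation*}
which together with $\|w(0)\|_{H^{-1}}=0$ forces $\|w\|_{H^{-1}}\equiv 0$ on the interval of existence by a Yudovich/Osgood-type optimization over $p$, in the spirit of \cite{Yudovich63,Robert97}.

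The principal obstacle is that $D^2\mathcal{K}$ is only a Calder\'on--Zygmund operator, hence unbounded on $L^\infty$; in particular $\|D^2\mathcal{K} \ast u\|_\infty$ cannot be controlled by $\|u\|_\infty$ for singular kernels such as the Newtonian potential. This rules out the direct Gronwall closure available in \cite{BertozziSlepcev10} for smooth kernels and forces the $p$-dependent CZ estimate together with the Yudovich-type closure, analogous to the classical $L^\infty$-vorticity uniqueness proof for 2D Euler \cite{Yudovich63} and Vlasov--Poisson \cite{Robert97}. A secondary technical issue is the careful tracking of the boundary contributions on bounded domains, where convexity of $D$ is what produces the correct sign.
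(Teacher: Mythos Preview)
Your approach is essentially the paper's: an $H^{-1}$ energy estimate, monotonicity of $A$ for the diffusion, a splitting of the drift into two pieces, integration by parts using convexity of $D$ for the boundary sign, the Calder\'on--Zygmund bound $\|D^2\mathcal{K}\ast f\|_p \leq Cp\|f\|_p$ for one piece and a direct $\|\nabla\psi\|_2^2$ bound for the other, and a Yudovich/Osgood closure. The paper uses the asymmetric splitting $u\nabla\mathcal{K}\ast u - v\nabla\mathcal{K}\ast v = (u-v)\nabla\mathcal{K}\ast u + v\nabla\mathcal{K}\ast(u-v)$ rather than your symmetric one, but this is cosmetic.

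There is one genuine inconsistency. You claim $\|\Delta\mathcal{K}\ast\bar u\|_\infty \lesssim 1$ ``since $\Delta\mathcal{K}$ splits as a Dirac plus a bounded/integrable remainder for admissible kernels of Newtonian/Bessel type.'' This is \emph{not} true for general admissible kernels: Definition~\ref{def:admK} allows, for instance, $\mathcal{K}(x)=|x|^{-d/p}$ near the origin with $d/(d-2)<p<\infty$, for which $\Delta\mathcal{K}$ is a genuine Calder\'on--Zygmund kernel, not a Dirac plus something bounded. You yourself note two paragraphs later that $D^2\mathcal{K}$ is unbounded on $L^\infty$; $\Delta\mathcal{K}$ is the trace of $D^2\mathcal{K}$ and suffers the same defect. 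The fix is immediate and already in your hands: the term $\int|\nabla\psi|^2\,\Delta\mathcal{K}\ast\bar u\,dx$ is of the same form as $\int\nabla\psi^T(D^2\mathcal{K}\ast\bar u)\nabla\psi\,dx$, and both are bounded by $\int|D^2\mathcal{K}\ast\bar u||\nabla\psi|^2\,dx$, which you then estimate via H\"older, the CZ bound, and the interpolation $\|\nabla\psi\|_{2p/(p-1)}\leq\|\nabla\psi\|_\infty^{1/p}\|\nabla\psi\|_2^{(p-1)/p}$. This is exactly how the paper handles its term $I_2$. Drop the $L^\infty$ claim on $\Delta\mathcal{K}\ast\bar u$ and route that term through the same CZ estimate; the rest of your argument goes through unchanged.
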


\noindent
We also prove the following continuation theorem, which generalizes similar theorems used in for instance \cite{BlanchetEJDE06,Blanchet09}.  
The proof follows the well known approach of first bounding intermediate $L^p$ norms and using Alikakos iteration \cite{Alikakos} to conclude the solution is bounded in $L^\infty$ (for instance, see \cite{Kowalczyk05,SugiyamaDIE07,Blanchet09,Corrias04,JagerLuckhaus92,BlanchetEJDE06}). 

\begin{theorem}[Continuation] \label{thm:Continuation}
The weak solution to \eqref{AE} has a maximal time interval of existence $T_\star$ and either $T_\star = \infty$ or $T_\star < \infty$ and 
\begin{equation}\label{cond:equint}
\lim_{k \rightarrow \infty}\limsup_{t \nearrow T_\star} \norm{(u-k)_+}_{\frac{2-m}{2-m^\star}} > 0.  
\end{equation}

Here $m$ is such that $1 \leq m \leq m^\star$ and $\liminf_{z \rightarrow \infty}A^{\prime}(z)z^{1-m} > 0$. 
In particular, for all $p > (2-m)/(2-m^\star)$, 
\begin{equation*}
\lim_{t \nearrow T_\star} \norm{u}_{p} = \infty.
\end{equation*}
\end{theorem}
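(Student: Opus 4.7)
The plan is to define $T_\star$ as the supremum of times $T>0$ for which Theorems~\ref{LEADD}/\ref{LEADDRd} combined with the uniqueness Theorem~\ref{thm:Uniqueness} produce a weak solution $u\in L^\infty(D_T)$; the solution is obtained by iterating the local existence theorems, using the fact that a solution with $u(t_0,\cdot)\in L^\infty$ can be restarted. The main assertion is proved by contrapositive. Set $q_0 := (2-m)/(2-m^\star)$. Assuming that
\begin{equation*}
\lim_{k\to\infty}\limsup_{t\nearrow T_\star}\norm{(u-k)_+}_{q_0} = 0,
\end{equation*}
I will derive a uniform $L^\infty$ bound on $[0,T_\star)$, which lets me restart beyond $T_\star$ and contradicts maximality. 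The blow-up statement for $\norm{u}_p$ with $p>q_0$ then follows because a uniform $L^p$ bound on $[0,T_\star)$ would force the limsup above to vanish, via Chebyshev's inequality $|\{u>k\}|\lesssim k^{-p}$ and the H\"older interpolation $\norm{(u-k)_+}_{q_0}\leq \norm{(u-k)_+}_p\,|\{u>k\}|^{1/q_0-1/p}$.

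For the core $L^\infty$ estimate I would test the weak formulation against $(u-k)_+^{p-1}$, for $k\geq z_c$ and $p\geq q_0$, producing
\begin{equation*}
\frac{1}{p}\frac{d}{dt}\int (u-k)_+^{p}\,dx + (p-1)\int A'(u)(u-k)_+^{p-2}|\nabla (u-k)_+|^2\,dx = (p-1)\int u(u-k)_+^{p-2}\nabla(u-k)_+\cdot\nabla\kernel\ast u\,dx.
\end{equation*}
The hypothesis $A'(z)\gtrsim z^{m-1}$ for $z$ large allows me to bound the diffusion integrand from below by a constant multiple of $|\nabla((u-k)_+^{(m+p-1)/2})|^2$, which Sobolev embedding converts into control of $\norm{(u-k)_+}_{(m+p-1)d/(d-2)}^{m+p-1}$. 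For the aggregation term I would split $u=(u-k)_+ + u\wedge k$, estimate $\nabla\kernel\ast u$ by the generalized Young inequality using the weak-type bound $\kernel\in L^{p^\star,\infty}_{loc}$ with $p^\star=1/(m^\star-1)$, and apply H\"older. The choice of exponent $q_0=(2-m)/(2-m^\star)$ is dictated precisely by requiring that, after accounting for the Sobolev and weak-Young scalings, one factor of $\norm{(u-k)_+}_{q_0}$ is left over.

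Once this factor is made arbitrarily small by choosing $k$ large (uniformly in $t\in[0,T_\star)$ thanks to the standing assumption), the aggregation term is absorbed into the diffusion and I obtain a differential inequality of the form
\begin{equation*}
\frac{d}{dt}\norm{(u-k)_+}_p^p + c_p\norm{(u-k)_+}_{r(p)}^{m+p-1} \leq C(p,k,M),\qquad r(p)=(m+p-1)d/(d-2).
\end{equation*}
Interpolating the second term on the left against the conserved mass $\norm{u}_1=M$ gives a uniform $L^p$ bound on $[0,T_\star)$ for every finite $p$, and an Alikakos iteration on $p=2^j$ promotes this to a uniform $L^\infty$ bound, completing the argument. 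The principal obstacle I anticipate is keeping the absorption of the aggregation term uniform as $p\to\infty$: the inhomogeneous cross term involving $u\wedge k$ (absent in the purely homogeneous PKS setting) introduces $p$- and $k$-dependent constants that must be tracked carefully so that the Alikakos iteration constants grow at most geometrically and the limiting $L^\infty$ bound remains finite.
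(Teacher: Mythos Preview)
Your overall strategy --- assume \eqref{cond:equint} fails, derive a uniform bound up to $T_\star$, restart past $T_\star$ via uniqueness --- matches the paper's. The derivation of the $L^p$ blow-up from \eqref{cond:equint} via Chebyshev and H\"older is fine. Two points of departure are worth noting.

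\textbf{Working at the weak level.} You propose to test the weak formulation directly against $(u-k)_+^{p-1}$. Definitions~\ref{def:WSBD}/\ref{def:WSRD} only give $A(u)\in L^2(0,T;H^1)$ and $u_t\in L^2(0,T;H^{-1})$, so this test function is not obviously admissible and the time-derivative identity is formal. The paper avoids this entirely: the key $L^p$ estimate is packaged as Lemma~\ref{lem:UnifInt} and carried out on the \emph{regularized} classical solutions $u^\epsilon$ of~\eqref{RAE}, uniformly in $\epsilon$; the failure of \eqref{cond:equint} for $u$ is transferred to the $u^\epsilon$, and the resulting uniform $L^p$ bound (for one $p>\bar q$) then feeds into the restart argument through Lemma~\ref{LinftyG}, whose time step depends only on $\norm{u_0}_p$.

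\textbf{Route to $L^\infty$.} Your plan is a direct Alikakos ladder on $(u-k)_+$, with the concern you flag about tracking constants as $p\to\infty$. The paper's route is indirect and sidesteps that concern. After one more integration by parts the aggregation term becomes $\int u_k^p\,\Delta\K\ast u$ and $\int u_k^{p-1}\,\Delta\K\ast u$, estimated via Calder\'on--Zygmund (Lemma~\ref{lem:CZ}) or Hardy--Littlewood--Sobolev~\eqref{ineq:GGHLS} rather than weak Young on $\grad\K\ast u$. Coupled with Gagliardo--Nirenberg--Sobolev, the differential inequality closes at a \emph{single} value of $p$ in $(\bar q,\gamma/(\gamma-1)]$. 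That one $L^p$ bound suffices, by Morrey, to put $\grad\K\ast u$ in $L^\infty$ uniformly in time, after which the pre-packaged iteration Lemma~\ref{dynamic} yields the $L^\infty$ bound on $u$. So the paper never iterates the equi-integrability absorption across all $p$; it only needs it once.
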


\begin{remark}
Note that the order of the limits in Theorem \ref{thm:Continuation} is important.  In fact, if the ordered is reversed the limit is always zero.  
\end{remark}

\noindent
For the case $m^\star = 2-2/d$, Blanchet et al. \cite{Blanchet09} identified the critical mass for the problem with the Newtonian potential, $\K = c_d\abs{x}^{d-2}$,  and $A(u) = u^{m^\star}$.
The authors show that if $M < M_c$ then the solution exists globally and if $M > M_c$ then the solution may blow up in finite time. There $M_c$ is identified as 
\begin{equation*}
M_c = \left( \frac{2}{(m^\star-1)C_{m^\star} c_d} \right)^{1/(2-m^\star)},
\end{equation*}
where $C_{m^\star}$ is the best constant in the Hardy-Littlewood-Sobolev inequality given below in Lemma \ref{lem:GHLS}.
It is natural to ask the same question for more general cases. In this work we generalize these results to include inhomogeneous kernels and general nonlinear diffusion. 
First, we state the generalization of the finite time blow up results. \\

\begin{theorem}[Finite Time Blow Up for Critical Problems: $m^\star > 1$] \label{thm:SupercritMass}
Let $\D$ either be bounded and convex with a smooth boundary or $\D = \Real^d$. Let $\K$ and $A(u)$ be admissible and satisfy
\begin{itemize}
\item[\textbf{(B1)}] $\K(x) = c\abs{x}^{-d/p} + o(\abs{x}^{-d/p})$ as $x \rightarrow 0$ for some $c > 0$ and $d/(d-2) \leq p < \infty$. 
\item[\textbf{(B2)}] $x\cdot\grad \K(x) \leq -(d/p)\K(x) + C_1$ for all $x \in \Real^d$, for some $C_1 \geq 0$.
\item[\textbf{(B3)}] $A^\prime(z) = m\overline{A}z^{m-1} + o(z^{m-1})$ as $z \rightarrow \infty$ for some $m > 1,\overline{A} > 0$. 
\item[\textbf{(B4)}] $A(z) \leq (m-1)\Phi(z)$ for all $z > R$, for some $R > 0$ . 
\end{itemize}
Suppose the problem is critical, that is $m = m^\star$. 
Then the critical mass $M_c$ satisfies
\begin{equation*}
M_c = \left( \frac{2\overline{A}}{(m^\star-1)C_{m^\star}c} \right)^{1/(2-m^\star)},
\end{equation*}
and for all $M > M_c$ there exists a solution to \eqref{AE} which blows up in finite time with $\norm{u_0}_1 = M$.
\end{theorem}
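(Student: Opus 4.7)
The plan is a virial (second-moment) argument driven by the energy dissipation in Proposition \ref{prop:EnrDiss}: I construct initial data of mass $M$ whose free energy is so negative that $\int |x|^2 u\,dx$ is forced to become negative in finite time, contradicting $u\geq 0$. For a weak solution I would first derive the virial identity
\[
\frac{d}{dt}\int |x|^2 u\,dx = 2d\int A(u)\,dx + \int\!\!\int (x-y)\cdot\grad\K(x-y)\,u(x)u(y)\,dx\,dy - 2\int_{\partial D}(x\cdot\nu)A(u)\,dS,
\]
where the symmetrization in $(x,y)$ uses that $\grad\K$ is odd, and the boundary fluxes from $\grad A(u)\cdot\nu$ and $u\,\grad\K\ast u\cdot\nu$ cancel by the no-flux condition \eqref{cond:no_flux}. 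On $\Real^d$ the boundary integral is absent; on convex $D$ with the origin interior, $x\cdot\nu\geq 0$, so the remaining boundary term has the favorable sign and can be dropped from an upper bound. Rigorous justification proceeds via test functions $|x|^2\chi_R(x)$ with $\chi_R$ a radial cutoff, passing $R\to\infty$.

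Assumption \textbf{(B2)} yields $\int\!\!\int(x-y)\cdot\grad\K\,u(x)u(y)\,dx\,dy\leq -(2d/p)\mathcal{W}(u)+C_1 M^2$, and \textbf{(B4)} combined with $A(z)\leq C z$ for $z$ small gives $\int A(u)\,dx\leq (m^\star-1)S(u)+C_2(M)$. Since $m^\star-1=1/p$, the two coefficients match and
\[
\frac{d}{dt}\int |x|^2 u\,dx \leq 2d(m^\star-1)\energy(u(t))+C_3(M)\leq 2d(m^\star-1)\energy(u_0)+C_3(M),
\]
by \eqref{EnrDiss}. It therefore suffices to exhibit $u_0$ of mass $M$ with $\energy(u_0)$ so negative that the right-hand side becomes a strictly negative constant.

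Fix a smooth, nonnegative, compactly supported profile $v$ with $\int v=M$ and rescale $v_\lambda(x)=\lambda^d v(\lambda x)$, which preserves mass. From \textbf{(B1)} we have $\K(x)\sim c|x|^{-d/p}$ near the origin, and \textbf{(B3)} together with the normalization in \eqref{def:G} yields $\Phi(z)\sim \overline A\, z^{m^\star}/(m^\star-1)$ at infinity; since $d(m^\star-1)=d/p$, both $S(v_\lambda)$ and $\mathcal{W}(v_\lambda)$ scale like $\lambda^{d/p}$ at leading order, so
\[
\energy(v_\lambda) = \lambda^{d/p}\!\left[\tfrac{\overline A}{m^\star-1}\|v\|_{m^\star}^{m^\star}-\tfrac{c}{2}\!\int\!\!\int\!\tfrac{v(x)v(y)}{|x-y|^{d/p}}\,dx\,dy\right]+o(\lambda^{d/p}).
\]
Choosing $v$ nearly extremal for the Hardy--Littlewood--Sobolev inequality (Lemma \ref{lem:GHLS}) makes the bracket strictly negative precisely when $M^{2-m^\star}>2\overline A/((m^\star-1)\,c\,C_{m^\star})$, that is, when $M>M_c$. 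Then $\energy(v_\lambda)\to-\infty$ as $\lambda\to\infty$; fixing $\lambda$ large enough to drive the right-hand side of the previous display below $-\delta<0$ and taking $u_0:=v_\lambda$ (supported in $D$ for $\lambda$ large, in the bounded case) forces $\int|x|^2 u\,dx$ to decrease at rate at least $\delta$ and hence become negative in finite time---impossible. Thus the solution cannot persist globally, and Theorem \ref{thm:Continuation} upgrades this into finite-time $L^p$ blow up.

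The most delicate part is the rigorous virial identity at the weak-solution level: $|x|^2$ is not an admissible test function on $\Real^d$ and $\grad\K\ast u$ is only weakly singular, so one must combine a time-regularization of $u_t\in L^2_t H^{-1}$ with a spatial cutoff and show the cutoff error vanishes using mass conservation, the $L^\infty$ bound on $u$, and a priori control of $\int |x|^2 u$. A secondary technicality is that \textbf{(B1)} and \textbf{(B3)} only specify leading-order asymptotics, so the $o(\lambda^{d/p})$ remainders must be shown to be uniform as $\lambda\to\infty$; this follows from concentration of $v_\lambda$ to the origin, where the leading asymptotics of $\Phi$ and $\K$ are sharp.
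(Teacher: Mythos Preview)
Your proposal is correct and follows essentially the same approach as the paper: the virial identity combined with \textbf{(B2)}, \textbf{(B4)} and the energy dissipation inequality yields $\frac{d}{dt}I(t)\leq 2d(m^\star-1)\F(u_0)+C(M)$, and the construction of initial data with $\F(u_0)\to-\infty$ for $M>M_c$ by mass-preserving rescalings of near-extremals for the Hardy--Littlewood--Sobolev inequality is exactly the content of the paper's Lemma~\ref{lem:infF_criticalMass}. Your treatment is in fact slightly more explicit than the paper's, which simply says ``the theorem follows from a Virial identity as in Theorem~\ref{thm:SupercritDiff}'' and defers the rigorous justification of the identity to passage through the regularized problems.
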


\begin{theorem}[Finite Time Blow Up for Supercritical Problems] \label{thm:SupercritDiff}
Let $\D$ be as in Theorem \ref{thm:SupercritMass}. Let $\K$ satisfy \textbf{(B1)} and \textbf{(B2)} in Theorem \ref{thm:SupercritMass} and $A(u)$ satisfy \textbf{(B3)} and \textbf{(B4)} in Theorem \ref{thm:SupercritMass} with $1 < m < m^\star$. Then for all $M > 0$ there exists a solution which blows up in finite time with $\norm{u_0}_1 = M$. 
\end{theorem}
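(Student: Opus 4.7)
\noindent
\textbf{Proof plan for Theorem \ref{thm:SupercritDiff}.} The plan is to run a virial (second-moment) argument as in Theorem \ref{thm:SupercritMass}, but to exploit the strict inequality $m < m^\star$ to absorb the entropy term into the interaction term, thereby eliminating any mass restriction. The hard work is then pushed onto the construction of an initial datum: for any prescribed mass $M$, I want to build $u_0 \geq 0$ with $\|u_0\|_1 = M$ whose free energy $\mathcal{F}(u_0)$ is as negative as I please. Assumption \textbf{(B1)}, which gives the precise singularity of $\kernel$, and \textbf{(B3)}, which fixes the growth of $A$, together with the mass-invariant scaling $u \mapsto \lambda^d u(\lambda\, \cdot)$, will allow me to do exactly that because the interaction energy scales like $\lambda^{d/p}$ while the entropy scales only like $\lambda^{d(m-1)}$, and $m - 1 < 1/p$ by supercriticality.

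\noindent
\textbf{Step 1: virial inequality.} Let $M_2(t) := \int |x|^2 u(x,t)\,dx$. Testing the weak formulation \eqref{WF1} against $\phi = |x|^2$ (on $\Real^d$ this requires a smooth cutoff $|x|^2 \chi_R$ and a passage $R\to\infty$ justified by the mass and free-energy bounds) and integrating by parts, I obtain
\begin{equation*}
\frac{d}{dt}M_2(t) \;=\; 2d\int A(u)\,dx \;+\; \iint u(x)u(y)\,(x-y)\cdot\nabla\kernel(x-y)\,dx\,dy \;-\; 2\oint_{\partial D} A(u)\, x\cdot\nu\,dS,
\end{equation*}
where the boundary term is dropped (on $\Real^d$ it is absent; on bounded convex $D$, after translating so the origin lies in $D$, convexity gives $x\cdot\nu\geq 0$ and $A(u)\geq 0$, so this term is nonpositive). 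Assumption \textbf{(B2)} controls the interaction by $-\tfrac{2d}{p}\mathcal{W}(u) + C_1 M^2$, and the energy dissipation inequality \eqref{EnrDiss} replaces $-\mathcal{W}(u)$ by $-S(u) + \mathcal{F}(u_0)$. Finally, \textbf{(B4)} combined with the fact that $A$ grows at most linearly on $\{u\leq R\}$ yields $\int A(u) \leq (m-1)S(u) + C(R)M$. Putting these together,
\begin{equation*}
\frac{d}{dt} M_2(t) \;\leq\; 2d\Bigl[(m-1)-\tfrac{1}{p}\Bigr]S(u(t)) \;+\; \tfrac{2d}{p}\mathcal{F}(u_0) \;+\; C(M,R,C_1).
\end{equation*}
Because $m < m^\star = 1 + 1/p$, the coefficient of $S(u)$ is strictly negative; combined with the a priori lower bound $S(u) \geq \Phi'(0)M$ (coming from convexity of $\Phi$ and $\Phi(0)=0$), I can drop/absorb the $S$ term to arrive at $\frac{d}{dt}M_2(t)\leq C'(M,R) + \frac{2d}{p}\mathcal{F}(u_0)$.

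\noindent
\textbf{Step 2: bad initial data via scaling, and conclusion.} Fix any nonnegative $\tilde u\in C_c^\infty$ with $\int \tilde u\,dx = M$ (and, in the bounded case, with support contained in the interior of $D$). For $\lambda\geq 1$ set $u_0^\lambda(x):=\lambda^d \tilde u(\lambda x)$; this has mass $M$, support that fits into $D$ for all large $\lambda$, and $M_2(u_0^\lambda) = \lambda^{-2}M_2(\tilde u)\to 0$. Using \textbf{(B3)}, a direct change of variables gives $S(u_0^\lambda) = \tfrac{\bar A}{m-1}\lambda^{d(m-1)}\|\tilde u\|_m^m(1+o(1))$ as $\lambda\to\infty$, while using \textbf{(B1)} and dominated convergence,
\begin{equation*}
\mathcal{W}(u_0^\lambda) \;=\; \tfrac{c}{2}\lambda^{d/p}\iint \tilde u(\xi)\tilde u(\eta)|\xi-\eta|^{-d/p}\,d\xi\,d\eta \;+\; o(\lambda^{d/p}).
\end{equation*}
Since $d(m-1) < d/p$, the interaction energy dominates and $\mathcal{F}(u_0^\lambda)\to-\infty$. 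Therefore I can pick $\lambda$ so large that $\tfrac{2d}{p}\mathcal{F}(u_0^\lambda) + C'(M,R) \leq -\gamma < 0$, which forces $M_2(t) \leq M_2(u_0^\lambda) - \gamma t$ along the corresponding weak solution. As $M_2\geq 0$ while $M>0$, the solution cannot survive past $t_\star = M_2(u_0^\lambda)/\gamma$; invoking the continuation Theorem \ref{thm:Continuation} gives finite-time blow up.

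\noindent
\textbf{Main obstacles.} The genuinely delicate points are (i) justifying the virial identity at the level of weak solutions on $\Real^d$, which requires approximation by the regularized problems used in the local existence theorem and truncation of the test function $|x|^2$, with tails controlled by $M_2(u_0) < \infty$; and (ii) controlling the $o(\cdot)$ remainders from \textbf{(B1)} and \textbf{(B3)} uniformly enough that the leading negative $\lambda^{d/p}$ term in $\mathcal{F}(u_0^\lambda)$ is not swamped -- handled by choosing $\tilde u$ with shrinking support $\mathrm{supp}\,\tilde u \subset B_{1/\lambda}$ (so the kernel is evaluated only in its singular regime) and by exploiting that the density $\lambda^d\tilde u(\lambda x)$ sits well above the threshold $z_c$ in \textbf{(B3)} on an arbitrarily large fraction of its support.
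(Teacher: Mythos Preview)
Your proof is correct and follows the same virial/second-moment strategy as the paper. There are two minor organizational differences worth noting. In Step~1, the paper combines the entropy and interaction terms into $2d(m-1)\mathcal{F}(u)$ (this uses that $\mathcal{W}(u)$ is bounded below, which follows from $|\mathcal{K}(x)|\lesssim |x|^{2-d}$), then applies \eqref{EnrDiss}; you instead apply \eqref{EnrDiss} first to replace $-\mathcal{W}(u)$ by $\mathcal{F}(u_0)-S(u)$ and obtain a strictly negative coefficient on $S(u)$, which you then drop via the entropy lower bound. These are equivalent rearrangements. In Step~2, the paper (Lemma~\ref{lem:infF_Supercritical}) chooses a near-extremizer $h^\star$ of the Hardy--Littlewood--Sobolev inequality before rescaling, whereas you take an arbitrary $\tilde u\in C_c^\infty$; your choice is simpler and entirely sufficient in the supercritical regime, since one only needs $\mathcal{W}(u_0^\lambda)\sim \lambda^{d/p}$ to dominate $S(u_0^\lambda)=O(\lambda^{d(m-1)})$, and the HLS near-extremizer is really only needed for the critical case (Lemma~\ref{lem:infF_criticalMass}). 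One small wording issue: in your ``Main obstacles~(ii)'' you write $\mathrm{supp}\,\tilde u\subset B_{1/\lambda}$, but since $\tilde u$ is fixed you mean $\mathrm{supp}\,u_0^\lambda\subset B_{R/\lambda}$; the paper handles the corresponding remainders via the straightforward upper bound $\int\Phi(h_\lambda)\leq \tfrac{\overline{A}+\epsilon}{m-1}\|h_\lambda\|_m^m + C(R)\|h_\lambda\|_1$, which is all that is needed.
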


\noindent
The Newtonian and Bessel potentials both satisfy these conditions with $C_1=0$ (Lemma 2.2, \cite{SugiyamaDIE06}), and so the results apply to PKS with degenerate diffusion.  
Due to the decay of admissible kernels (Definition \ref{def:admK}) condition \textbf{(B2)} should only impose a significant restriction on the behavior of $\K$ at the origin. 
Power-law diffusion satisfies conditions \textbf{(B3)} and \textbf{(B4)}; however, \textbf{(B4)} is also restrictive, for example, $A(u) = u^m - u$ for $u$ large does not satisfy the condition.\\
\\
\noindent
The accompanying global existence theorem is significantly more inclusive than the blow up theorems, both in the kinds of kernels and nonlinear diffusion considered. As in Theorem \ref{thm:SupercritMass}, 
the estimate of the critical mass only depends on the leading order term of an asymptotic expansion of the kernel at the origin and the growth of the entropy at infinity. 
The approach used here and in \cite{Blanchet09,BlanchetEJDE06} relies on using the energy dissipation inequality \eqref{EnrDiss} and the continuation theorem (Theorem \ref{thm:Continuation}). The third key component is an inequality which relates the interaction energy $\mathcal{W}(u)$ to the entropy $S(u)$. For $m^\star > 1$ this is the Hardy-Littlewood-Sobolev inequality given in Lemma \ref{lem:GHLS}.  In this case, the estimate of the critical mass is given by \eqref{cond:critical_mass_powerlaw}. 

\begin{theorem}[Global Well-Posedness for $m^\star > 1$] \label{thm:GWP}
Suppose $m^\star > 1$. Then we have the following:
\begin{itemize}
\item[(i)] If the problem is subcritical, then the solution exists globally (i.e. $T_\star = \infty$) and is uniformly bounded in the sense $u \in L^\infty((0,\infty)\times D)$.
\item[(ii)] If the problem is critical then there exists a critical mass $M_c > 0$  such that if $\norm{u_0}_1 = M < M_c$, then the solution exists globally and is uniformly bounded in the sense $u \in L^\infty((0,\infty)\times D)$.  The critical mass is estimated below in \eqref{cond:critical_mass_powerlaw}. 
\end{itemize}
\end{theorem}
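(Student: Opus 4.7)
The plan is to combine the energy dissipation inequality \eqref{EnrDiss} with the Hardy-Littlewood-Sobolev type inequality of Lemma \ref{lem:GHLS} to obtain a uniform-in-time a priori bound on $\norm{u(t)}_{m^\star}$, from which the continuation theorem (Theorem \ref{thm:Continuation}) applied with $m = m^\star$ rules out finite-time blow-up, and a rerun of the Alikakos iteration from the proof of that theorem promotes the bound to the uniform $L^\infty$ estimate claimed in the statement.

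Since $\mathcal{F}(u(t)) \leq \mathcal{F}(u_0)$ gives $S(u(t)) \leq \mathcal{W}(u(t)) + \mathcal{F}(u_0)$, I would bound each side from below and above in terms of $\norm{u(t)}_{m^\star}^{m^\star}$. For the entropy, the definition $\Phi''(z) = A'(z)/z$ together with the critical or subcritical assumption on $\liminf A'(z)/z^{m^\star-1}$ yields $\Phi(z) \geq \frac{\overline{A}}{m^\star-1} z^{m^\star} - Cz - C$ (with $\overline{A}$ arbitrarily large in the subcritical case), so that, using mass conservation $\norm{u(t)}_1 = M$,
\begin{equation*}
S(u(t)) \geq \frac{\overline{A}}{m^\star-1} \norm{u(t)}_{m^\star}^{m^\star} - C(M).
\end{equation*}
For the interaction energy, I would split $\K = \K_{\text{sing}} + \K_{\text{bdd}}$ with $\K_{\text{sing}}$ supported near the origin and lying in $L^{p,\infty}$ (where $p$ is the weak-$L^p$ exponent giving $m^\star = (p+1)/p$). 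The bounded part contributes at most $C(M) M^2$ by Young's inequality, while the singular part is controlled via Lemma \ref{lem:GHLS} together with the interpolation $\norm{u}_{2p/(2p-1)}^2 \leq \norm{u}_1^{2-m^\star}\norm{u}_{m^\star}^{m^\star}$, yielding
\begin{equation*}
\mathcal{W}(u(t)) \leq \frac{c\, C_{m^\star}}{2} M^{2-m^\star} \norm{u(t)}_{m^\star}^{m^\star} + C(M).
\end{equation*}
Subtracting the two bounds gives
\begin{equation*}
\left( \frac{\overline{A}}{m^\star - 1} - \frac{c\, C_{m^\star}}{2} M^{2-m^\star} \right) \norm{u(t)}_{m^\star}^{m^\star} \leq \mathcal{F}(u_0) + C(M),
\end{equation*}
and the bracketed coefficient is strictly positive exactly when $M < M_c = (2\overline{A}/((m^\star-1)c\, C_{m^\star}))^{1/(2-m^\star)}$ in the critical case (matching the value appearing in Theorem \ref{thm:SupercritMass}), and for every $M$ in the subcritical case (by the freedom in choosing $\overline{A}$). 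In either regime we obtain $\sup_{t<T_\star} \norm{u(t)}_{m^\star} \leq C(M, \mathcal{F}(u_0))$.

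Applying Theorem \ref{thm:Continuation} with $m = m^\star$ reduces the blow-up criterion \eqref{cond:equint} to a statement about $\limsup_{t \nearrow T_\star} \norm{(u-k)_+}_1$; Chebyshev's inequality gives $\norm{(u-k)_+}_1 \leq k^{1-m^\star}\norm{u}_{m^\star}^{m^\star}$, which tends to $0$ as $k \to \infty$ uniformly in $t$, so $T_\star = \infty$. The uniform $L^\infty$ bound on $(0, \infty) \times D$ then follows from the Alikakos $L^p \to L^{2p}$ cascade used in the proof of Theorem \ref{thm:Continuation}, now fed by the time-uniform $L^{m^\star}$ input, so that every constant in the recursion depends only on $M$ and $\mathcal{F}(u_0)$.

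The main obstacles I anticipate are primarily bookkeeping rather than conceptual: the clean splitting of $\K$ into singular and bounded parts so that Lemma \ref{lem:GHLS} applies to exactly the right piece; the absorption of the various $C(M)$ lower-order terms without degrading the critical coefficient, which must be sharp for the critical case to be tight against Theorem \ref{thm:SupercritMass}; and verifying that the Alikakos iteration produces time-uniform (not merely locally-in-time-finite) bounds, which requires tracking that the recursion constants involve only $M$ and $\mathcal{F}(u_0)$ and no increasing function of the time horizon. The subcritical case is cleaner, since the bracketed quantity is strictly positive for any mass by choosing $\overline{A}$ large enough.
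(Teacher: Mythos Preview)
Your proposal is correct and follows essentially the same route as the paper: energy dissipation \eqref{EnrDiss}, a splitting of $\K$ into a singular part controlled by the Hardy--Littlewood--Sobolev inequality \eqref{ineq:GHLS} and a bounded remainder, comparison of the resulting bound on $\mathcal{W}(u)$ against the entropy to obtain a time-uniform $L^{m^\star}$ bound, and then Theorem~\ref{thm:Continuation} together with Lemma~\ref{lem:UnifInt} for global existence and the uniform $L^\infty$ estimate. The only cosmetic difference is that the paper phrases the entropy comparison directly through $\lim_{z\to\infty}\Phi(z)/z^{m^\star}$ (as in \eqref{cond:critical_mass_powerlaw}) and the level-set decomposition $\{u>k\}$ versus $\{u\leq k\}$, rather than first extracting a pointwise lower bound $\Phi(z)\geq \tfrac{\overline{A}}{m^\star-1}z^{m^\star}-Cz$ from the behavior of $A'$; this makes the paper's critical-mass formula slightly more general than yours (it does not require the specific asymptotic \textbf{(B3)} on $A'$), though the two agree whenever \textbf{(B3)} holds, and on $\Real^d$ your pointwise bound should drop the additive constant $-C$ (use it only for $z$ large and invoke Lemma~\ref{lem:entropy_lowerbd} for small $z$) so that it integrates.
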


\begin{proposition}[Critical Mass For $m^\star > 1$]\label{prop:critical_mass}
If $\K = c\abs{x}^{-d/p} + o(\abs{x}^{-d/p})$ as $x \rightarrow 0$ for some $c \geq 0$ and $p$, $d/(d-2) \leq p < \infty$, then $M_c$ satisfies, 
\begin{equation}
\lim_{z\rightarrow\infty}\frac{\Phi(z)}{z^{m^\star}} - \frac{C_{m^\star}}{2}c M_c^{2-m^\star} = 0. \label{cond:critical_mass_powerlaw}
\end{equation} 
If $c = 0$ or $\lim_{z \rightarrow \infty}\Phi(z)z^{-m^{\star}} = \infty$ then we define $M_c = \infty$. 
\end{proposition}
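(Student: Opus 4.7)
The plan is to pin down the critical mass $M_c$ of Theorem \ref{thm:GWP}(ii) by balancing the leading asymptotic of $\Phi$ at infinity against the leading singularity of $\K$ at the origin in the free energy $\energy = S - \mathcal{W}$. Set $L := \lim_{z\to\infty}\Phi(z)/z^{m^\star} \in (0,\infty]$, which is a well-defined quantity given $m^\star>1$ and the relation $\Phi''(z)=A'(z)/z$, and define $M_c$ by $L = \tfrac{1}{2}\,c\,C_{m^\star}\,M_c^{2-m^\star}$, with the convention $M_c=\infty$ whenever $c=0$ or $L=\infty$. The goal is to show that for every $M<M_c$ the free energy controls $\|u(t)\|_{m^\star}^{m^\star}$ uniformly in time, so that the continuation criterion \eqref{cond:equint} of Theorem \ref{thm:Continuation} applied with $m=m^\star$ (giving exponent $(2-m^\star)/(2-m^\star)=1$) cannot hold, forcing $T_\star=\infty$, after which standard Alikakos iteration upgrades to $u\in L^\infty((0,\infty)\times D)$.

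The first step is an upper bound for $\mathcal{W}(u)$. Fix $\epsilon>0$ and use the asymptotic $\K(x) = c\abs{x}^{-d/p} + o(\abs{x}^{-d/p})$ near zero to choose $r_\epsilon>0$ with $\K(x) \leq (c+\epsilon)\abs{x}^{-d/p}$ for $\abs{x}\leq r_\epsilon$; by admissibility conditions \textbf{(R)} and \textbf{(KN)}, $\K$ is bounded by $K_\epsilon := \K(r_\epsilon)<\infty$ on $\set{\abs{x}>r_\epsilon}$. Splitting the double integral defining $\mathcal{W}$ on $\abs{x-y}\lessgtr r_\epsilon$ and applying the Hardy--Littlewood--Sobolev inequality (Lemma \ref{lem:GHLS}) to the singular piece gives
\begin{equation*}
\mathcal{W}(u) \;\leq\; \frac{(c+\epsilon)\,C_{m^\star}}{2}\,M^{2-m^\star}\,\|u\|_{m^\star}^{m^\star} \;+\; \frac{K_\epsilon}{2}\,M^2.
\end{equation*}
Symmetrically, for any $\eta>0$ there is $Z_\eta$ with $\Phi(z)\geq (L-\eta)z^{m^\star}$ for $z>Z_\eta$, so using $\Phi\geq 0$ on $\set{u\leq Z_\eta}$ and mass conservation,
\begin{equation*}
S(u) \;\geq\; (L-\eta)\|u\|_{m^\star}^{m^\star} \;-\; (L-\eta)\,Z_\eta^{m^\star-1}\,M.
\end{equation*}

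Combining these two estimates yields
\begin{equation*}
\energy(u) \;\geq\; \Bigl[(L-\eta) - \tfrac{1}{2}(c+\epsilon)\,C_{m^\star}\,M^{2-m^\star}\Bigr]\|u\|_{m^\star}^{m^\star} \;-\; C(\epsilon,\eta,M).
\end{equation*}
When $M<M_c$ one has $L > \tfrac{1}{2}\,c\,C_{m^\star}\,M^{2-m^\star}$, so $\epsilon$ and $\eta$ may be chosen small enough that the bracketed coefficient is strictly positive; the energy dissipation inequality \eqref{EnrDiss} then gives a uniform bound on $\|u(t)\|_{m^\star}^{m^\star}$ in terms of $\energy(u_0)$. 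Since $\|(u-k)_+\|_1\leq k^{1-m^\star}\|u\|_{m^\star}^{m^\star}\to 0$ as $k\to\infty$ uniformly in $t$, the continuation criterion fails and $T_\star=\infty$. The degenerate cases are immediate: if $c=0$ the HLS term disappears and the argument goes through for every $M$, and if $L=\infty$ the entropy coefficient dominates for every $M$, so in both cases $M_c=\infty$ as claimed. The most delicate point is the treatment of the $o(\abs{x}^{-d/p})$ remainder when $D=\Real^d$, where one must ensure the splitting leaves a finite tail constant $K_\epsilon$; this is secured by the radial monotonicity \textbf{(KN)} and $C^3$ regularity away from the origin \textbf{(R)} built into admissibility.
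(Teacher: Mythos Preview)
Your approach is essentially identical to the paper's proof of Theorem~\ref{thm:GWP}(ii): split $\K$ into a singular part controlled by Hardy--Littlewood--Sobolev (Lemma~\ref{lem:GHLS}) and a bounded remainder, compare against the asymptotic growth $\Phi(z)/z^{m^\star}\to L$, deduce a uniform-in-time bound on $\|u\|_{m^\star}$, and feed this into the continuation theorem.

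There is one genuine slip. You assert ``$\Phi\geq 0$ on $\set{u\leq Z_\eta}$'', but this is false under the paper's normalization \eqref{def:G}: from $\Phi(0)=0$, $\Phi'(1)=0$ and strict convexity, $\Phi$ decreases on $[0,1]$ to a strictly negative minimum $\Phi(1)<0$, so $\Phi$ is negative on a nontrivial interval. The paper deals with exactly this point by invoking Lemma~\ref{lem:entropy_lowerbd}, which uses \textbf{(D3)} to get $\int\Phi(u)\,dx\geq -CM$ (equivalently, the pointwise bound $\Phi(z)\geq -Cz$ coming from $h(z)\geq -C$ for $z\leq 1$). Substituting this for your nonnegativity claim only shifts the constant $C(\epsilon,\eta,M)$ in your displayed lower bound for $\energy(u)$ and leaves the remainder of the argument intact, so the repair is immediate.
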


\begin{remark}
By Lemma \ref{lem:LwpChar}, if $\K \in \wkspace{p}_{loc}$ then $\exists \, \delta, C > 0$ such that $\forall x, \, \abs{x} < \delta$, $\K(x) \leq C\abs{x}^{-d/p}$. 
Then, if the kernel does not admit an asymptotic expansion as in Proposition \ref{prop:critical_mass}, the critical mass $M_c$ can be estimated by,
\begin{equation*}
\lim_{z\rightarrow\infty}\frac{\Phi(z)}{z^{m^\star}} - \frac{C_{m^\star}}{2}C M_c^{2-m^\star} = 0.  
\end{equation*}
\end{remark}

\begin{remark}
Note, $\lim_{z \rightarrow\infty} \Phi(z)z^{-m^\star}$ is always well-defined but is not necessarily finite unless 
\begin{equation*}
\limsup_{z \rightarrow \infty}A^\prime(z)z^{1-m^\star} < \infty.
\end{equation*}
 If the problem is critical then necessarily $\lim_{z \rightarrow \infty} \Phi(z)z^{-m^\star} > 0$ so there always exists a positive mass which satisfies \eqref{cond:critical_mass_powerlaw}. Moreover, if the problem is subcritical then necessarily $\lim_{z \rightarrow \infty}\Phi(z)z^{-m^\star} = \infty$.
\end{remark}

\noindent
The case $m^\star = 1$ is analogous to the classical PKS problem in 2D, where linear diffusion is critical.
For the 2D PKS, the critical mass is given by $M_c = 8\pi$ for both the Newtonian and Bessel potentials \cite{BlanchetEJDE06,Calvez}.
In this work we treat the $m^\star = 1$ case for $d\geq 2$ on bounded domains, recovering the critical mass of the classical PKS, although 
\textbf{(D3)} technically requires the diffusion to be nonlinear and degenerate. 
The case $d \geq 3$ and $m^\star = 1$ is approached in \cite{KarchSuzuki10}, but the optimal critical mass is not identified. 
Our estimate is given below in \eqref{cond:critical_mass_2D}. As above, the critical mass only depends on the asymptotic expansion of the kernel at the origin and the growth of the entropy at infinity.
 We first state the analogue of Theorem \ref{thm:SupercritMass}. 

\begin{theorem}[Finite Time Blow Up for Critical Problems $m^\star = 1$] \label{thm:SupercritMass_2D}
Let $D$ be a smooth, bounded and convex domain and $d\geq 2$. Suppose 
 $\K$ satisfies
\begin{itemize}
\item[\textbf{(C1)}] $\K(x) = -c\ln\abs{x} + o(\ln\abs{x})$ as $x \rightarrow 0$ for some $c > 0$ . 
\item[\textbf{(C2)}] $x\cdot\grad \K(x) \leq -c + C\abs{x}$ for all $x \in \Real^d$, for some $C \geq 0$ .
\item[\textbf{(C3)}] $A(z) \leq \overline{A}z$ for some $\overline{A} > 0$. 
\end{itemize}
Then the critical mass $M_c$ satisfies
\begin{equation*}
M_c = \frac{2d\overline{A}}{c},
\end{equation*}
and for all $M > M_c$ there exists a solution which blows up in finite time with $\norm{u_0}_1 = M$. 
\end{theorem}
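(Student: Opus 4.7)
The plan is to prove blow up via a virial (second-moment) argument on the bounded convex domain $D$, paralleling the proof of Theorem \ref{thm:SupercritMass} but replacing the free-energy-based bound \textbf{(B4)} with the cruder linear bound \textbf{(C3)}. Fix an interior point $x_0 \in D^\circ$ and set $I(t) := \into |x-x_0|^2 u(x,t)\,dx$. For each $M > M_c$ I take initial data concentrated near $x_0$, for instance $u_0 = (M/|B_r(x_0)|)\mathbf{1}_{B_r(x_0)}$ with $r$ small, so that $u_0 \in L^\infty(D)$, $\int u_0\,dx = M$, and $I(0) = \tfrac{d}{d+2}Mr^2$ can be made arbitrarily small. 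Theorem \ref{LEADD} then produces a weak solution on a maximal interval $[0,T_\star)$, and we will derive a contradiction assuming $T_\star = \infty$.

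To derive a differential inequality for $I(t)$, I would test the weak formulation with $\phi(x) = |x-x_0|^2 \in H^1(D) \cap C^\infty(\overline{D})$ and integrate by parts. The boundary contributions proportional to $|x-x_0|^2$ combine into the no-flux flux \eqref{cond:no_flux} and vanish, while the remaining boundary term $-2\int_{\partial D}(x-x_0)\cdot \nu\, A(u)\,dS$ is non-positive since $(x-x_0)\cdot \nu \geq 0$ on $\partial D$ by convexity of $D$. After symmetrizing the interaction integral using that $\nabla \kernel$ is odd, and applying \textbf{(C3)} and \textbf{(C2)}, I obtain
\begin{equation*}
\frac{dI}{dt} \;\leq\; 2d\overline{A}M - cM^2 + C\int\!\!\int |x-y|\,u(x,t)u(y,t)\,dx\,dy.
\end{equation*}
The correction is controlled by the triangle inequality followed by Cauchy--Schwarz in the measure $u\,dx$: $\int\!\!\int|x-y|uu\,dx\,dy \leq 2M\int|x-x_0|u\,dx \leq 2M^{3/2}\sqrt{I(t)}$. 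Writing $\delta := M - M_c > 0$ so that $cM - 2d\overline{A} = c\delta$, the moment inequality becomes $I'(t) \leq -c\delta M + 2CM^{3/2}\sqrt{I(t)}$. Choosing $r$ small enough that $2CM^{3/2}\sqrt{I(0)} < c\delta M/2$ (which is vacuous when $C=0$, as for the Newtonian and Bessel potentials), a bootstrap shows $I(t)\leq I(0)$ and $I'(t)\leq -c\delta M/2$ for all $t \in [0,T_\star)$, forcing $I(t)$ to reach zero in time at most $2I(0)/(c\delta M)$. Since $\int u\,dx = M$ is conserved, $I(t) = 0$ is incompatible with $u(\cdot,t) \in L^\infty(D)$, contradicting $T_\star = \infty$.

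The principal technical obstacle is the rigorous justification of the virial identity at the level of weak solutions in the sense of Definition \ref{def:WSBD}. Since $\phi = |x-x_0|^2$ lies in $C^\infty(\overline{D})\cap H^1(D)$ it is an admissible test function; the trace of $A(u)$ is well-defined because $A(u) \in L^2(0,T; H^1(D))$, so the surface integrals make sense; and the symmetrization of the aggregation term is permitted by the integrability $u\nabla \kernel \ast u \in L^2(D_T)$ built into the definition. A secondary point is that \textbf{(C2)} is stated on all of $\mathbb{R}^d$, but since $D$ is bounded the additive correction $C|x-y|$ only needs to be absorbed on a set of diameter $\mathrm{diam}(D)$, which is precisely what the Cauchy--Schwarz estimate on $I(t)$ accomplishes. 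These ingredients should render the formal moment calculation fully rigorous and complete the proof.
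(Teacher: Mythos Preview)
Your proposal is correct and follows essentially the same virial argument as the paper: both compute $dI/dt$, drop the boundary term by convexity, apply \textbf{(C3)} and \textbf{(C2)}, and bound the $\int\!\!\int |x-y|u(x)u(y)\,dx\,dy$ correction by $2M^{3/2}I^{1/2}$ via Cauchy--Schwarz, arriving at the same differential inequality $I'(t) \le -c\delta M + C M^{3/2}\sqrt{I(t)}$. The only minor difference is that the paper justifies the virial computation by passing to the limit from the regularized problems \eqref{RAE} (as noted in the proof of Theorem~\ref{thm:SupercritDiff}), whereas you argue directly from the weak formulation with $\phi = |x-x_0|^2$; either route works here.
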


\noindent
The corresponding global existence theorem includes more general kernels and nonlinear diffusion. The proof is similar to Theorem \ref{thm:GWP}, except that the logarithmic Hardy-Littlewood-Sobolev inequality (Lemma \ref{lem:log_sob}) is used in place of the Hardy-Littlewood-Sobolev inequality.

\begin{theorem}[Global Well-Posedness for $m^\star = 1$ on Bounded Domains] \label{thm:GWP_2D}
Suppose $m^\star = 1$ and $d\geq 2$, let $D$ be bounded, smooth and convex. Then we have the following:
\begin{itemize} 
\item[(i)] If the problem is subcritical, then the solution exists globally and is uniformly bounded in the sense $u \in L^\infty((0,\infty)\times D)$.
\item[(ii)] If the problem is critical then there exists a critical mass, $M_c > 0$, such that if $\norm{u_0}_1 = M < M_c$, then the solution exists globally and is uniformly bounded in the sense $u \in L^\infty((0,\infty)\times D)$.
The critical mass is estimated below in \eqref{cond:critical_mass_2D}. 
\end{itemize}
\end{theorem}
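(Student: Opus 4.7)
The proof proceeds along the same lines as Theorem \ref{thm:GWP} but replaces the Hardy-Littlewood-Sobolev inequality with its logarithmic counterpart, Lemma \ref{lem:log_sob}. The plan is to combine the energy dissipation inequality \eqref{EnrDiss} with the log-HLS estimate to produce a uniform in time bound on $\int_D u(t)\log u(t)\,dx$ on the maximal interval $[0,T_\star)$, apply de la Vall\'ee-Poussin to verify the equi-integrability criterion \eqref{cond:equint} of Theorem \ref{thm:Continuation} with $m=1$, and finally bootstrap to a uniform $L^\infty$ bound via Alikakos iteration.

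By the energy dissipation inequality, $S(u(t))\leq \mathcal{F}(u_0) + \mathcal{W}(u(t))$. Admissibility of $\mathcal{K}$ together with $m^\star=1$ forces the singularity at the origin to be at worst logarithmic, so $\mathcal{K}(x) = -c\log\abs{x} + g(x)$ with $c\geq 0$ and $g$ bounded on compact sets; applying Lemma \ref{lem:log_sob} to the principal part and absorbing the $g$-contribution into a mass-dependent constant yields an estimate of the form
\begin{equation*}
\mathcal{W}(u(t)) \leq \alpha(c)\, M \int u\log u\, dx + C(M),
\end{equation*}
where $\alpha(c)$ is linear in $c$ and is calibrated so that the critical mass derived below matches the blow-up threshold of Theorem \ref{thm:SupercritMass_2D}. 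A direct integration of $\Phi''(z) = A'(z)/z$ with the normalization \eqref{def:G} shows $\Phi(z)\sim \overline{A} z\log z$ as $z\to\infty$ in the critical case, and $\Phi(z)/(z\log z)\to\infty$ in the subcritical case.

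In the critical case, for any $\delta>0$ these asymptotics give $S(u)\geq (\overline{A}-\delta)\int u\log u\, dx - C_\delta$, so combining with the log-HLS bound yields
\begin{equation*}
\bigl(\overline{A} - \delta - \alpha(c) M\bigr)\int u\log u\, dx \leq \mathcal{F}(u_0) + C(M,\delta).
\end{equation*}
Sending $\delta\to 0$, this is a closed a priori bound whenever $M<M_c:=\overline{A}/\alpha(c)$, which is precisely the threshold \eqref{cond:critical_mass_2D} matching Theorem \ref{thm:SupercritMass_2D}. In the subcritical case, for any $\varepsilon>0$ one may pick $R$ so that $\Phi(z)\geq \varepsilon z\log z$ for $z\geq R$, and choosing $\varepsilon > \alpha(c) M$ closes the same inequality for arbitrary mass. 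In either situation, the de la Vall\'ee-Poussin estimate $\int_{\{u>k\}} u\,dx \leq (\log k)^{-1}\int u\log u\,dx$ gives
\begin{equation*}
\lim_{k\to\infty}\limsup_{t\nearrow T_\star}\norm{(u(t)-k)_+}_1 = 0,
\end{equation*}
and since $(2-m)/(2-m^\star)=1$ for $m=1$ (which is admissible in Theorem \ref{thm:Continuation} because $\liminf A'(z)>0$ by \textbf{(D2)}), the continuation theorem forces $T_\star = \infty$.

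To upgrade to a uniform-in-time $L^\infty$ bound I would run the standard $L^p$ energy estimate for $p>1$: test \eqref{PD} against $u^{p-1}$, integrate by parts, and bound the aggregation contribution using the Calder\'on-Zygmund estimate $\norm{D^2\mathcal{K}\ast u}_q\lesssim \norm{u}_q$ afforded by \textbf{(BD)}. Using the uniform $L^1$ and $L\log L$ control already established, this closes to give a uniform bound on some $\norm{u(t)}_{p_0}$, and Alikakos iteration then propagates the control to $L^\infty$. The main technical obstacle is to isolate the sharp constant $\alpha(c)$ in the log-HLS step, since the correction terms $g$ in $\mathcal{K}$ and the $o(z\log z)$ remainder in $\Phi$ must be absorbed into subcritical quantities without damaging the coefficient of $\int u\log u\,dx$; otherwise the critical mass estimate \eqref{cond:critical_mass_2D} cannot be made to match the blow-up threshold exactly.
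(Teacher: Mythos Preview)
Your approach is essentially the paper's: combine the energy dissipation inequality with the logarithmic HLS inequality to obtain a uniform-in-time bound on $\int u\ln u\,dx$, then invoke Theorem~\ref{thm:Continuation} with $m=1$ (your observation that \textbf{(D2)} makes $m=1$ admissible is exactly right), and finally bootstrap to $L^\infty$ via the iteration already built into Lemma~\ref{lem:UnifInt}.

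The one place where your writeup is imprecise, and where the paper's argument differs, is the kernel decomposition. You write $\K(x)=-c\ln\abs{x}+g(x)$ with $g$ bounded on compacts, but under the hypothesis $\K(x)=-c\ln\abs{x}+o(\ln\abs{x})$ of Proposition~\ref{prop:crit_mass_2D} the remainder need not be bounded (take $o(\ln\abs{x})=\sqrt{-\ln\abs{x}}$, say). The paper avoids this by using the softer statement: for every $\epsilon>0$ there exists $\delta>0$ with $\K(x)\leq -(c+\epsilon)\ln\abs{x}$ on $\abs{x}<\delta$; the contribution from $\abs{x-y}\geq\delta$ is then bounded by $C(M,\delta,\operatorname{diam}D)$ since $D$ is bounded. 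After applying log-HLS this yields
\begin{equation*}
\int\Phi(u)\,dx-(c+\epsilon)\frac{M}{2d}\int u\ln u\,dx\leq C,
\end{equation*}
and the paper then splits the entropy on $\{u>k\}$ versus $\{u\leq k\}$, using the ratio $\Phi(u)/(u\ln u)$ directly rather than introducing a fixed constant $\overline{A}$. This $(c+\epsilon)$-trick, together with the analogous splitting on the entropy side, is precisely the mechanism you flag as the ``main technical obstacle'' in your last paragraph; it is how the sharp coefficient survives without assuming anything about the subleading terms.
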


\begin{proposition}[Critical Mass for $m^\star = 1$ on Bounded Domains] \label{prop:crit_mass_2D}
If $\K(x) = -c\ln\abs{x} + o(\ln\abs{x})$ as $x \rightarrow 0$ for some $c \geq 0$, then $M_c$ satisfies, 
\begin{equation}
\lim_{z\rightarrow\infty}\frac{\Phi(z)}{z \ln z} - \frac{c}{2d} M_c = 0. \label{cond:critical_mass_2D}
\end{equation}
If $c = 0$ or $\lim_{z \rightarrow \infty}\Phi(z)(z \ln z)^{-1} = \infty$ then we define $M_c = \infty$. 
\end{proposition}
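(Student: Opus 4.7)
The plan is to mimic the proof of Proposition \ref{prop:critical_mass} (the $m^\star > 1$ case), but with the logarithmic Hardy--Littlewood--Sobolev inequality (Lemma \ref{lem:log_sob}) in place of the standard HLS, and with the de la Vall\'ee Poussin criterion taking the place of a direct $L^p$ bound, since the superlinear growth of the entropy at infinity is only $z \ln z$.

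First I would decompose $\K(x) = -c \ln\abs{x} + K_1(x)$ with $K_1(x) = o(\ln\abs{x})$ near the origin. Since $\K \in C^3\setminus\set{0}$ and $D$ is bounded, $K_1$ is bounded away from the origin; near the origin, for any $\eta > 0$ there is $\delta > 0$ with $\abs{K_1(x)} \leq \eta \abs{\ln\abs{x}}$ for $\abs{x} < \delta$. Splitting $\mathcal{W}(u)$ accordingly and applying the log-HLS (with the sharp constant $M/d$) to each piece gives
\begin{equation*}
\mathcal{W}(u) \leq \frac{(c + \eta) M}{2d} \int u \ln u \, dx + C(\eta, M, D),
\end{equation*}
where the error from $K_1$ has been absorbed into the HLS bound at the cost of the arbitrarily small $\eta$.

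Next, set $\alpha = \lim_{z \to \infty} \Phi(z)/(z \ln z)$ and fix $M < 2d\alpha / c = M_c$. Choose $\eta$ so small that $\alpha - (c+\eta)M/(2d) > 0$, and then $\eta' > 0$ smaller still. From convexity of $\Phi$ with $\Phi(0) = 0$ together with $\Phi(z)/(z \ln z) \to \alpha$ one obtains $\Phi(z) \geq (\alpha - \eta')\, z \ln z - C_{\eta'}$ for all $z \geq 0$, so $S(u) \geq (\alpha - \eta') \int u \ln u \, dx - C_{\eta'} \abs{D}$. Combining with the bound on $\mathcal{W}(u)$ and the energy dissipation inequality $\F(u(t)) \leq \F(u_0)$ yields
\begin{equation*}
\left( \alpha - \eta' - \tfrac{(c+\eta)M}{2d} \right) \int u(\cdot,t) \ln u(\cdot,t) \, dx \leq \F(u_0) + C(\eta, \eta', M, D)
\end{equation*}
uniformly on $t \in [0, T_\star)$, and the coefficient on the left is strictly positive by the choice of $\eta, \eta'$.

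Finally, this uniform $u \ln u$ bound plus the de la Vall\'ee Poussin test (since $z \ln z / z \to \infty$) produces equiintegrability of $\set{u(\cdot,t)}_{t < T_\star}$, i.e.\ $\lim_{k\to\infty}\limsup_{t \nearrow T_\star} \norm{(u-k)_+}_1 = 0$. Because $m = m^\star = 1$, the continuation index $(2-m)/(2-m^\star) = 1$ in Theorem \ref{thm:Continuation} is exactly this quantity, so the continuation criterion forces $T_\star = \infty$; the uniform $L^\infty$ bound claimed in Theorem \ref{thm:GWP_2D}(ii) then follows from the Alikakos iteration embedded in the proof of Theorem \ref{thm:Continuation}. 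The main obstacle is the sharp form of the logarithmic HLS on a bounded domain for general $d \geq 2$ (the content of Lemma \ref{lem:log_sob}), together with the careful bookkeeping that forces the $o(\ln\abs{x})$ correction in $\K$ to enter only through the arbitrarily small $\eta$, leaving the sharp coefficient $c/(2d)$ untouched.
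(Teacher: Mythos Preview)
Your proposal is correct and follows essentially the same route as the paper's proof of Theorem \ref{thm:GWP_2D} (which is where Proposition \ref{prop:crit_mass_2D} is actually justified): energy dissipation, the $o(\ln|x|)$ expansion of $\K$ to introduce an arbitrarily small loss $\eta$, the logarithmic HLS inequality on the bounded domain, and then the continuation theorem with index $(2-m)/(2-m^\star)=1$. The only cosmetic differences are that the paper splits the entropy integral over $\{u>k\}$ and $\{u<k\}$ and invokes Lemma \ref{lem:entropy_lowerbd}, rather than deriving your global pointwise bound $\Phi(z)\geq(\alpha-\eta')z\ln z - C_{\eta'}$, and the paper does not name de la Vall\'ee Poussin explicitly but simply bounds $\int_{\{u>k\}} u\ln u\,dx$ directly; both amount to the same conclusion.
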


\begin{remark}
By \textbf{(BD)} and \textbf{(MN)}, $\exists \, \delta, C > 0$ such that $\forall x, \, \abs{x} < \delta$, $\K(x) \leq -C\ln x$. 
Therefore, if the kernel does not have the asymptotic expansion required in Proposition \ref{prop:crit_mass_2D} then
 the critical mass $M_c$ may be estimated as, 
\begin{equation*}
\lim_{z\rightarrow\infty}\frac{\Phi(z)}{z \ln z} - \frac{C}{2d}M_c = 0. 
\end{equation*}
\end{remark}

\begin{remark} These theorems include many known global existence and finite time blow up results in the literature including \cite{SugiyamaADE07,SugiyamaDIE06,SugiyamaADE07,BertozziSlepcev10,Blanchet09,Kowalczyk05,CalvezCarrillo06}. Our main contributions to the existing theory is 
the unification of these results and the estimate of the critical mass for inhomogeneous kernels and general nonlinear diffusion. 
In the case of the Newtonian potential Blanchet et al. showed in \cite{Blanchet09} that solutions at the critical mass also exist globally. See \cite{Dolbeault04,Biler06,Blanchet08} for the corresponding result for classical 2D PKS. 
\end{remark}

\subsection{Properties of Admissible Kernels} \label{sec:PropAdmKern}
Definition \ref{def:admK} implies a number of useful characteristics which we state here and reserve the proofs for the Appendix \ref{Apx:Kernels}. First, we have that every admissible kernel is at least as integrable as the Newtonian potential. 
\begin{lemma} \label{lem:Newt}
Let $\K$ be admissible. Then $\grad \K \in \wkspace{d/(d-1)}$. If $d\geq 3$, then $\K \in \wkspace{d/(d-2)}$.
\end{lemma}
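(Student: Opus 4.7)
The plan is to use radial symmetry to reduce everything to one-variable estimates, upgrade condition \textbf{(BD)} to the pointwise bounds $|k'(r)|\lesssim r^{1-d}$ and (for $d\geq 3$) $|k(r)|\lesssim r^{2-d}$ on the radial profile $\K(x)=k(|x|)$, and then read off the weak-$L^p$ bounds from the distribution function.

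The key computational step is to record the Hessian of $\K$,
\[
\partial_i\partial_j\K(x) = k''(r)\frac{x_i x_j}{r^2} + \frac{k'(r)}{r}\left(\delta_{ij}-\frac{x_i x_j}{r^2}\right),\qquad r = |x|,
\]
and differentiate once more. Evaluating $\partial_1\partial_2^2 \K$ at the point $x = r e_1$, every $x_2$-containing term vanishes and one is left with the clean identity $\partial_1\partial_2^2\K(re_1) = (k'(r)/r)'$, so that \textbf{(BD)} gives the one-dimensional bound $|(k'(r)/r)'|\lesssim r^{-d-1}$ for all $r>0$. Because $r^{-d-1}$ is integrable on any $[r_0,\infty)$ and $k'/r$ is smooth there by \textbf{(R)}, $k'/r$ admits a finite limit $L$ at infinity; \textbf{(KN)} forces $k$ to be bounded at infinity (otherwise $\K$ cannot lie in any weak-$L^p$ space), hence $L$ must vanish, since $L\neq 0$ would yield $k'(r)\sim Lr$ for large $r$ and thus unbounded $k$. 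Integrating the derivative bound from $r$ to $\infty$ therefore gives
\[
\left|\frac{k'(r)}{r}\right| \leq \int_r^\infty\left|\left(\frac{k'}{s}\right)'\!(s)\right|ds \lesssim r^{-d},
\]
i.e.\ $|k'(r)|\lesssim r^{1-d}$ for every $r>0$; when $d\geq 3$, integrating once more and using $k(r)\to 0$ yields $|k(r)|\lesssim r^{2-d}$.

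Since both $|\grad\K(x)|=|k'(|x|)|$ and $|\K(x)|=|k(|x|)|$ depend only on $|x|$, each superlevel set is contained in a ball: $\{|\grad\K|>\alpha\}\subset B(0,c\alpha^{-1/(d-1)})$ and, when $d\geq 3$, $\{|\K|>\alpha\}\subset B(0,c'\alpha^{-1/(d-2)})$. Taking Lebesgue measure immediately produces the claimed inclusions $\grad\K\in \wkspace{d/(d-1)}$ and $\K\in \wkspace{d/(d-2)}$.

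The hardest part is spotting that among the various components of $D^3\K$, the mixed derivative $\partial_1\partial_2^2\K$ evaluated on the $e_1$-axis reduces to exactly $(k'/r)'$, which collapses \textbf{(BD)} into a clean ODE-type inequality. Everything after that is integration plus the elementary radial distribution-function argument; the only mild subtlety is the justification that $L=0$, which is where \textbf{(KN)} does real work rather than just supplying monotonicity of $k$ near the origin.
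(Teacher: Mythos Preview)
Your approach is the same as the paper's: integrate the bound \textbf{(BD)} along a ray from $x$ to infinity to get $|\nabla\K(x)|\lesssim|x|^{1-d}$ and (for $d\ge 3$) $|\K(x)|\lesssim|x|^{2-d}$, then read off the weak-$L^p$ memberships. The paper does this directly in Cartesian coordinates via $|\partial_i\partial_j\K(x)|\le\int_1^\infty|\partial_r\partial_i\partial_j\K(rx)|\,dr\lesssim|x|^{-d}$ and iterates; your radial reduction through the identity $\partial_1\partial_2^2\K(re_1)=(k'/r)'$ is correct but an unnecessary detour, since no special component of $D^3\K$ needs to be isolated.

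There is, however, a genuine gap in your argument that $L=\lim_{r\to\infty}k'(r)/r$ vanishes. From \textbf{(KN)} you get $k'\le 0$, hence $L\le 0$, but the parenthetical ``otherwise $\K$ cannot lie in any weak-$L^p$ space'' is circular: weak-$L^p$ membership is exactly what you are proving, and nothing in Definition~\ref{def:admK} rules out $L<0$. Indeed $\K(x)=-|x|^2$ satisfies every admissibility clause ($k$ non-increasing, $k''$ and $k'/r$ constant hence monotone, $D^3\K\equiv 0\lesssim|x|^{-d-1}$, $\K\in C^\infty\subset W^{1,1}_{\mathrm{loc}}$), yet $\nabla\K=-2x\notin L^{d/(d-1),\infty}$. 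The same unjustified step appears, less visibly, in the paper: the inequality $|\partial_i\partial_j\K(x)|\le\int_1^\infty|\partial_r\partial_i\partial_j\K(rx)|\,dr$ tacitly assumes $D^2\K(rx)\to 0$ as $r\to\infty$. So the lemma as stated requires an extra decay hypothesis at infinity---satisfied by the Newtonian and Bessel potentials the authors have in mind---and both your argument and the paper's only go through once that is granted. Your later ``using $k(r)\to 0$'' when passing from $k'$ to $k$ has the same status.
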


\noindent
In general, the second derivatives of admissible kernels are not locally integrable, but we may still properly define $\conv{D^2\K}{u}$ as a linear operator which involves a Cauchy principal value integral. By the Calder\'on-Zygmund inequality (see e.g. [Theorem 2.2 \cite{LittleStein}]) we can conclude that this distribution is bounded on $L^p$ for $1 < p < \infty$. The inequality also provides
an estimate of the operator norms, which is of crucial importance to the proof of uniqueness. 
 
\begin{lemma} \label{lem:CZ}
Let $\K$ be admissible and $\vec{v} = \conv{\grad \K}{u}$. Then $\forall p$, $1 < p < \infty$, $\exists \, C(p)$ such that $\norm{\grad \vec{v}}_p \leq C(p)\norm{u}_p$ and $C(p) \lesssim p$ for $2 \leq p < \infty$. 
\end{lemma}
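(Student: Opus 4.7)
The plan is to view $\grad\vec v = D^2\K\ast u$ as a Calder\'on--Zygmund singular integral operator acting on $u$ and apply the classical theorem (e.g.\ Stein, \emph{Singular Integrals}, Chapter II), which is known to produce an $L^p\to L^p$ bound of order $\max(p,p/(p-1))$. This immediately yields the claimed linear growth in $p$ for $p\ge 2$, and is in fact the standard source of this sharp constant, just as in the Yudovich uniqueness argument for 2D Euler. The work is therefore reduced to verifying that $D^2\K$ (taken in the principal value sense) satisfies the hypotheses of the theorem.

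First I would establish the size estimate $|D^2\K(x)|\lesssim |x|^{-d}$ for all $x\neq 0$. Near the origin, this follows by integrating the hypothesis \textbf{(BD)}, $|D^3\K(x)|\lesssim |x|^{-d-1}$, radially inward from a point $|x_0|=\delta>0$, using that $\K\in C^3\setminus\set{0}$ makes $D^2\K(x_0)$ finite. Away from the origin, the radial non-increasing property \textbf{(KN)} forces $k(r)$ to tend to a finite limit, hence $k'(r),k''(r)\to 0$ as $r\to\infty$; combining this with the integrability of $D^3\K$ given by \textbf{(BD)} at infinity propagates the bound $|D^2\K(x)|\lesssim |x|^{-d}$ to all of $\Real^d\setminus\set{0}$. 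Next, the H\"ormander smoothness condition
\begin{equation*}
\int_{|x|>2|h|}|D^2\K(x-h)-D^2\K(x)|\,dx\lesssim 1
\end{equation*}
follows from the mean value theorem and \textbf{(BD)} applied to $D^3\K$ on the segment connecting $x-h$ to $x$, together with the size bound already obtained.

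Second, I would verify the cancellation/$L^2$ boundedness. The radial structure gives the explicit representation
\begin{equation*}
\partial_i\partial_j\K(x)=\frac{k'(r)}{r}\delta_{ij}+\Bigl(k''(r)-\frac{k'(r)}{r}\Bigr)\frac{x_ix_j}{r^2},
\end{equation*}
so for $i\neq j$ the spherical averages vanish by symmetry, and for $i=j$ one can subtract a multiple of the identity (as usual for CZ kernels of ``second Riesz type'') to produce a mean-zero-on-spheres kernel plus a bounded multiplier $c\,\delta_{ij}$. $L^2$ boundedness of the resulting principal value operator follows from Plancherel once we control the symbol $\xi_i\xi_j\widehat{\K}(\xi)$: the decomposition $\K=\chi\K+(1-\chi)\K$ with $\chi$ a smooth cutoff near the origin reduces this to showing that the Fourier transform of a function dominated by $|x|^{-(d-2)}$ near $0$ and smooth with $|D^2|\lesssim|x|^{-d}$ at infinity yields a bounded symbol $\xi\otimes\xi\,\widehat{\K}$, a routine computation using the weak $L^{d/(d-2)}$ control from Lemma~\ref{lem:Newt}.

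With the three standard CZ hypotheses in hand, the theorem of Calder\'on--Zygmund yields $\norm{\grad\vec v}_p\leq C(p)\norm{u}_p$ for all $1<p<\infty$ with $C(p)\lesssim\max(p,p/(p-1))$; restricting to $p\ge 2$ gives $C(p)\lesssim p$. The main technical obstacle is the verification of the $L^2$ bound, since admissibility is phrased only in terms of pointwise estimates on $\K$ and its derivatives rather than on $\widehat{\K}$; this is handled by the near/far splitting above. Everything else --- size, H\"ormander smoothness, and the sharp $p$-dependence of the Calder\'on--Zygmund constant --- is classical.
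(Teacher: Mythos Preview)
Your proposal is correct and takes essentially the same approach as the paper: reduce to the Calder\'on--Zygmund theorem, verify its hypotheses from the admissibility conditions, and read off the sharp $p$-dependence of the constant. The only difference is in how the cancellation/$L^2$ step is handled: rather than your Fourier-side argument, the paper computes the distributional second derivative directly --- integrating by parts against a test function over $\{|x|>\epsilon\}$ splits $D^2\K\ast u$ into a bounded multiple of $u$ (from the boundary term at $|x|=\epsilon$, using $|\nabla\K|=\mathcal{O}(|x|^{1-d})$) plus a principal value integral, and then simply cites that \textbf{(R)}, \textbf{(BD)}, \textbf{(KN)} suffice for the Calder\'on--Zygmund inequality to apply to the latter.
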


\noindent
One can further connect the integrability of the kernel with the integrability of the derivatives at the origin, 
which provides a natural extension of Lemma \ref{lem:CZ} through the Young's inequality for $\wkspace{p}$. 
\begin{lemma} \label{lem:CZ_HighReg}
Let $d\geq 3$ and $\K$ be admissible. Suppose $\gamma$ is such that $1 < \gamma < d/2$. Then $\K \in \wkspace{d/(d/\gamma - 2)}_{loc}$ if and only if $D^2\K \in \wkspace{\gamma}_{loc}$. The same holds for $\grad \K \in \wkspace{d/(d/\gamma - 1)}_{loc}$. In particular, $m^\star = 1 + 1/\gamma - 2/d$ for some $1 < \gamma < d/2$ if and only if $D^2\K\in \wkspace{\gamma}_{loc}$.
Moreover, $m^\star = 1$ if and only if $D^2\K \in \wkspace{d/2}_{loc}$. 
\end{lemma}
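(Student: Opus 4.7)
The plan is to reduce everything to pointwise power-law bounds on the radial profile $k(r)$ and its derivatives, using the monotonicity assumption \textbf{(MN)} together with radial symmetry \textbf{(KN)} to translate between $L^{p,\infty}_{loc}$ membership and such pointwise bounds. First, I would establish (or invoke from the companion lemma on weak $L^p$ characterization) the basic fact that for a non-negative, non-increasing radial function $f(x) = f_0(|x|)$, one has $f \in L^{p,\infty}_{loc}$ if and only if there exist $\delta, C > 0$ with $f_0(r) \leq C r^{-d/p}$ for $r \in (0,\delta)$. This is immediate by computing the distribution function of a monotone radial profile.

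Second, I would write out the Hessian of a radial $C^3$ function explicitly as
\begin{equation*}
\partial_i \partial_j \K(x) = \frac{x_i x_j}{r^2} k''(r) + \left(\delta_{ij} - \frac{x_i x_j}{r^2}\right) \frac{k'(r)}{r},
\end{equation*}
so that $|D^2 \K(x)|$ is pointwise comparable to $\max(|k''(r)|,|k'(r)/r|)$. By \textbf{(MN)}, these two quantities are each monotone on $(0,\delta)$, hence the weak-$L^\gamma$ membership of $D^2 \K$ on a small ball is equivalent to the pointwise bound $|k''(r)| + |k'(r)/r| \lesssim r^{-d/\gamma}$ near zero.

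The main technical step is then to pass between a bound on $k(r)$ and bounds on $k'(r)/r$ (and $k''$) using monotonicity. For the direction $D^2 \K \in \wkspace{\gamma}_{loc} \Rightarrow \K \in \wkspace{d/(d/\gamma - 2)}_{loc}$, having $|k'(r)| \lesssim r^{1-d/\gamma}$ with $d/\gamma > 2$ integrates directly, since $k$ is non-increasing, to $k(r) \lesssim r^{2-d/\gamma} = r^{-d/p}$ (plus a harmless constant). The converse direction is the subtle one: from $k(r) \leq C r^{-d/p}$ one recovers derivative bounds via monotonicity by noting that $g(r) := -k'(r)/r$ is non-negative and (by \textbf{(MN)} combined with the fact that $k$ is non-increasing and $C^3$) monotone. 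Then $\int_r^{2r} s\, g(s)\, ds \geq g(2r)\cdot \tfrac{3r^2}{2}$ while the same integral equals $k(r) - k(2r) \leq C r^{-d/p}$, giving $g(2r) \lesssim r^{-d/p - 2} = r^{-d/\gamma}$, and similarly for $k''$. The same dyadic monotonicity trick handles the $\grad \K$ statement since $|\grad \K(x)| = |k'(r)|$ and $|k'(r)| = r\cdot(k'(r)/r)$.

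The hard part is precisely this recovery of derivative bounds from function bounds, since monotonicity alone would not suffice without the dyadic averaging argument; once that is in place the identifications with $m^\star$ are just bookkeeping. For $m^\star = 1 + 1/\gamma - 2/d$, Definition \ref{def:mstar} gives $1/p = m^\star - 1 = 1/\gamma - 2/d$, i.e.\ $p = d/(d/\gamma - 2)$, so this case is a direct specialization of the main equivalence. For $m^\star = 1$, Definition \ref{def:mstar} asserts $D^2 \K(x) = O(|x|^{-2})$, which is exactly the statement $D^2 \K \in \wkspace{d/2}_{loc}$ under radial monotonicity; the converse again uses the dyadic step to extract the pointwise $r^{-2}$ bound from the weak-$L^{d/2}$ control.
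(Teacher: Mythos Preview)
Your proof is correct and shares the same scaffolding as the paper's: both reduce to the radial characterization (monotone radial $f \in L^{p,\infty}_{loc}$ iff $f(r) = \mathcal{O}(r^{-d/p})$), and both handle the easy direction $D^2\K \Rightarrow \K$ by integrating the pointwise bound. The difference lies in the hard direction $\K \Rightarrow D^2\K$. You argue directly: from $k(r) \lesssim r^{2-d/\gamma}$ you use the dyadic averaging inequality $\int_r^{2r} s\,g(s)\,ds \geq \tfrac{3r^2}{2}\,g(2r)$ for the monotone quantity $g = -k'/r$ to extract $g(2r) \lesssim r^{-d/\gamma}$, then iterate once more for $k''$. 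The paper instead argues by contradiction: if $k''$ (or $k'/r$) fails to be $\mathcal{O}(r^{-d/\gamma})$, monotonicity upgrades this to $r^{-d/\gamma} = o(k'')$, and integrating that \emph{lower} bound twice forces $k(r) \gtrsim \epsilon^{-1} r^{2-d/\gamma}$, contradicting $k = \mathcal{O}(r^{2-d/\gamma})$. Your direct dyadic approach is slightly more constructive and yields explicit constants; the paper's contrapositive-plus-double-integration avoids the two-stage iteration ($k \to k'/r \to k''$) at the cost of the little-$o$ upgrade step. One small point worth making explicit in your write-up: the dyadic inequality as stated needs $g$ non-increasing, so you should dispose of the non-decreasing (hence bounded) case separately, which is trivial.
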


\noindent
The following lemma clarifies the connection between the critical exponent and the interaction energy. 

\begin{lemma} \label{lem:GHLS}
Consider the Hardy-Littlewood-Sobolev type inequality, 
for all $f \in L^p$, $g \in L^q$ and $\K \in \wkspace{t}$ for $1 < p,q,t < \infty$ satisfying $1/p + 1/q + 1/t = 2$,
\begin{equation}
\abs{\int\int f(x)g(y)\K(x-y)dxdy} \lesssim \norm{f}_p\norm{g}_q \wknorm{\K}{t}. \label{ineq:GGHLS} 
\end{equation}
See \cite{Lieb83}. In particular, if $(p+1)/p = m^\star > 1$, then for all $u \in L^1 \cap L^{m^\star}$,
\begin{align}
\int u(x)u(y)\abs{x-y}^{-d/p}dxdy \leq C_{m^\star}\norm{u}_1^{2-m^\star}\norm{u}_{m^\star}^{m^\star}. \label{ineq:GHLS}
\end{align}
Here $C_{m^\star}$, depending only on $p$ and $d$, is taken to be the best constant for which \eqref{ineq:GHLS} holds for all such $u$. 
\end{lemma}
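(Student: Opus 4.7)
My plan is to separate the two parts of the lemma. The first inequality \eqref{ineq:GGHLS} is the rearrangement-type generalization of the Hardy--Littlewood--Sobolev inequality due to Lieb \cite{Lieb83}, which I would simply cite; all of the actual work lies in deducing \eqref{ineq:GHLS} from it.

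First I would verify that $\abs{x}^{-d/p}$ is an admissible kernel for \eqref{ineq:GGHLS}: a direct computation of the distribution function gives $\abs{\set{\abs{\cdot}^{-d/p} > \alpha}} = c_d\,\alpha^{-p}$, so $\abs{x}^{-d/p} \in \wkspace{p}$ with a purely dimensional quasi-norm. This fixes $t = p$ in Lieb's inequality. I would then apply \eqref{ineq:GGHLS} with $f = g = u$ and a \emph{single} exponent $r$ on both factors. The scaling constraint $2/r + 1/p = 2$ forces $r = 2p/(2p-1)$, and using $m^\star = (p+1)/p$ one checks $1 \leq r \leq m^\star$, giving
\[
\int\int u(x)u(y)\abs{x-y}^{-d/p}\,dx\,dy \lesssim \norm{u}_r^{\,2}.
\]

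The final step is log-convexity of the $L^p$ norms: writing $1/r = \theta + (1-\theta)/m^\star$ and plugging in the explicit values, the exponent works out to $\theta = (p-1)/(2p) = (2-m^\star)/2$ and $1 - \theta = m^\star/2$, so that
\[
\norm{u}_r^{\,2} \leq \norm{u}_1^{2-m^\star}\norm{u}_{m^\star}^{m^\star}.
\]
Combining the two displays yields \eqref{ineq:GHLS}, and $C_{m^\star}$ is then defined as the sharp constant for which this holds on $L^1 \cap L^{m^\star}$.

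There is no substantial obstacle beyond keeping the arithmetic of the exponents straight, since both ingredients---Lieb's inequality and log-convexity of Lebesgue norms---are used as black boxes. The only things to verify are the identity $2/r + 1/p = 2$ (so that Lieb applies with $f = g = u$ at a common exponent), the inclusion $r \in [1,m^\star]$ (so that log-convexity applies at all), and the algebraic identity $\theta = (2-m^\star)/2$ (so the interpolation produces exactly the claimed exponents on $\norm{u}_1$ and $\norm{u}_{m^\star}$).
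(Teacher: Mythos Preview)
Your proof is correct. The paper itself does not prove this lemma: it cites \cite{Lieb83} for \eqref{ineq:GGHLS} and simply states \eqref{ineq:GHLS} as a consequence, with $C_{m^\star}$ \emph{defined} to be the optimal constant. Your argument---applying \eqref{ineq:GGHLS} with $f=g=u$ at the diagonal exponent $r=2p/(2p-1)$ and then interpolating $\norm{u}_r$ between $L^1$ and $L^{m^\star}$---is the natural way to fill in this step, and your exponent arithmetic (in particular $\theta=(p-1)/(2p)=(2-m^\star)/2$, so that $\norm{u}_r^2 \leq \norm{u}_1^{2-m^\star}\norm{u}_{m^\star}^{m^\star}$) checks out.
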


\begin{remark}
It is not necessarily the case that $C_{m^\star}$ is easily related to the optimal constant in \eqref{ineq:GGHLS}. 
It is shown in \cite{Blanchet09} that $C_{2-2/d}$ is achieved for a fairly explicit family of extremals, but to our knowledge, extremals of \eqref{ineq:GHLS} have not been constructed for other values of $m^\star$. 
\end{remark}

\noindent
If $m^\star = 1$ then we will need the logarithmic Hardy-Littlewood-Sobolev inequality, as in for instance \cite{Dolbeault04,Blanchet08}.
\begin{lemma}[Logarithmic Hardy-Littlewood-Sobolev inequality \cite{CarlenLoss92}] \label{lem:log_sob}
Let $d \geq 2$ and $0 \leq f \in L^1$ be such that $\abs{\int f \ln f dx} < \infty$. Then, 
\begin{equation}
-\int\int_{\Real^d\times \Real^d}f(x)f(y) \ln\abs{x-y} dxdy \leq \frac{\norm{f}_1}{d}\int_{\Real^d} f\ln f dx + C(\norm{f}_1). \label{ineq:log_sob}
\end{equation}
\end{lemma}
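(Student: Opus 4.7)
This is a classical inequality of Carlen and Loss \cite{CarlenLoss92}, and in the present paper it is invoked only as a black box for the global existence argument in the $m^\star = 1$ case. Accordingly, a formal proof here consists of the citation itself. Nonetheless, a natural plan to re-derive the qualitative form (with a non-sharp constant $C(\|f\|_1)$, which is all that Theorem \ref{thm:GWP_2D} actually requires) is to pass to the limit from the standard Hardy-Littlewood-Sobolev inequality already recorded in Lemma \ref{lem:GHLS}.

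First, for $\lambda \in (0,d)$, apply Lemma \ref{lem:GHLS} with $f = g$ and exponent $p = 2d/(2d-\lambda)$ to obtain
\begin{equation*}
\int\int f(x) f(y) |x-y|^{-\lambda} \, dx dy \leq C(\lambda,d)\, \|f\|_{p}^{2}.
\end{equation*}
Second, subtract $M^2 = \|f\|_1^2$ from both sides and expand each side to first order in $\lambda$: on the left, use $|x-y|^{-\lambda} = 1 - \lambda \ln|x-y| + O(\lambda^2)$ pointwise; on the right, logarithmic differentiation of $p \mapsto \|f\|_p$ at $p=1$ gives $\|f\|_{p}^2 = M^2 + \frac{\lambda}{d} M \int f \ln(f/M)\,dx + o(\lambda)$. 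Third, divide by $\lambda$ and send $\lambda \to 0^+$; this produces \eqref{ineq:log_sob} provided the $O(\lambda)$ behavior of $(C(\lambda,d)-1)/\lambda$ is absorbed into a constant depending only on $M$.

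The main obstacle is the passage to the limit itself: $\ln|x-y|$ is not integrable against $f(x)f(y)$ without further decay hypotheses on $f$, so the interchange of limit and integral on the left-hand side must be justified by splitting $\ln|x-y| = \ln|x-y|\mathbf{1}_{|x-y|\leq 1} + \ln|x-y|\mathbf{1}_{|x-y|>1}$ and approximating $f$ by truncations in $L^1 \cap L^\infty$ with controlled second moment, then applying dominated convergence on each piece. The original Carlen-Loss argument avoids this technicality altogether by transferring the inequality to the sphere $S^d$ via stereographic projection, where the problem becomes conformally invariant, and then combining Schwarz symmetrization with the conformal group in a competing-symmetries scheme to identify the sharp extremals directly; this route simultaneously produces the sharp constant and verifies that $C(\|f\|_1)$ depends only on the mass.
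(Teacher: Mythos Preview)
Your assessment is correct: the paper provides no proof of this lemma and simply cites Carlen--Loss \cite{CarlenLoss92} as a black box, exactly as you identify in your first paragraph. The heuristic $\lambda \to 0$ derivation and the competing-symmetries discussion you add are supplementary and go beyond what the paper does; they are not needed for a comparison, and the paper's ``proof'' is indeed just the citation.
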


\section{Uniqueness} \label{sec:Unique}
\noindent
We now prove the uniqueness of weak solutions stated in Theorem \ref{thm:Uniqueness}. 
\begin{proof}({\bf Theorem \ref{thm:Uniqueness}})
The proof follows \cite{BertozziBrandman10,BertozziSlepcev10} and estimates the difference of weak solutions in $\dot{H}^{-1}$, motivated by the fact that the nonlinear diffusion is monotone in this norm \cite{VazquezPME}. To this end, if the domain is bounded, we define $\phi(t)$ as the zero mean strong solution of
\begin{align}
\Delta \phi(t) = u(t) - v(t) \textup{ in } \D \label{def:phi} \\
\grad \phi(t) \cdot \nu = 0, \textup{ on } \partial \D, \label{cond:phi_noflux}
\end{align}
where $\nu$ is the outward unit normal of $\D$. If the domain is $\Real^d$ for $d \geq 3$, we let $\phi(t) = -\conv{\mathcal{N}}{(u - v)}$ where $\mathcal{N}$ is the Newtonian potential.
In either case, by the integrability and boundedness of weak solutions $u(t)$ and $v(t)$ we can conclude $\phi(t) \in L^\infty(D_T) \cap C([0,T];\dot{H}^1)$, $\grad \phi(t) \in L^\infty(D_T)\cap L^2(D_T)$ and $\phi_t$ solves, 
\begin{equation*}
\Delta \phi_t = \partial_t u - \partial_t v. 
\end{equation*}
Then since $\norm{u(t) - v(t)}_{\dot{H}^{-1}} = \norm{\grad \phi(t)}_2$, we will show that $\norm{\grad \phi(t)}_2 = 0$. 
During the course of the proof, we integrate by parts on a variety of quantities. If the domain is bounded, then the boundary terms 
will vanish due to the no-flux conditions \eqref{cond:no_flux},\eqref{cond:phi_noflux}. In $\Real^d$, the computations are justified as $\grad \K \ast u, \grad A(u), \grad \K \ast v, \grad A(v), \grad \phi \in L^2(D_T)$. \\

\noindent
By the regularity of $\phi(t)$ and the no-flux boundary conditions (\ref{cond:phi_noflux}), \eqref{cond:no_flux} we have possibly up to a set of measure zero,
\begin{align*}
\frac{1}{2}\frac{d}{dt} \int \abs{\grad \phi(t)}^2 dx & = <\grad \phi(t), \partial_t \grad \phi(t) >
= -<\partial_tu(t) - \partial_tv(t),\phi(t)>. 
\end{align*}
Therefore, using $\phi(t)$ in the definition of weak solution and \eqref{cond:phi_noflux} we have,
\begin{align*}
\frac{1}{2} \frac{d}{dt} \int \abs{\grad \phi(t) }^2 dx & = \int (\grad A(u(t)) - \grad A(v(t))) \cdot \grad\phi(t) dx \\
& -\int (u-v)(\conv{\grad \K}{u})\cdot \grad \phi dx
 - \int v (\conv{\grad \K}{(u-v)}) \cdot \grad \phi dx. \\
& := I_1 + I_2 + I_3. 
\end{align*}
We drop the time dependence for notational simplicity. 
Since $A$ is increasing, we have the desired monotonicity of the diffusion,
\begin{equation*}
I_1 = - \int \left(A(u) - A(v)\right)(u-v) dx \leq 0.
\end{equation*}
We now concentrate on bounding the advection terms. \\

\noindent
We follow \cite{BertozziSlepcev10}. By integration by parts we have,
\begin{align}
I_2 & = \sum_{i,j}\int \partial_i\phi (\partial_{ij}\K \ast u) \partial_j \phi dx
 + \sum_{i,j}\int \partial_i \phi (\partial_j \K \ast u) \partial_{ij}\phi dx. \label{eq:I2}
\end{align}
If the domain is bounded, we may apply integration by parts, 
\begin{align*}
\sum_{i,j}\int \partial_i \phi (\partial_j\K \ast u) \partial_{ij}\phi dx & = -\sum_{i,j}\int \partial_{ij}\phi \partial_j\K \ast u \partial_i \phi dx - \sum_{i,j}\int \partial_i\phi (\partial_{jj}\K \ast u)\partial_i \phi dx \\
& + \sum_{i,j}\int_{\partial \D} \abs{\partial_i \phi}^2 \partial_j\K \ast u \nu_j dS,
\end{align*}
where $\nu$ is the unit outward normal to $\D$. As in \cite{BertozziSlepcev10}, we have $\nabla\K\ast u \cdot \nu \leq 0$ on $\partial \D$ since $\D$ is convex and $\K$ is radially decreasing, so that term is non-positive. If the domain were $\Real^d$, such boundary terms would vanish. Therefore by integration by parts again we have,
\begin{equation*}
\sum_{i,j}\int \partial_i \phi (\partial_j\K \ast u) \partial_{ij}\phi dx \leq -\frac{1}{2}\int (\Delta \K \ast u)\abs{\grad \phi}^2 dx, 
\end{equation*}
which together with \eqref{eq:I2} implies,
\begin{equation*}
I_2 \lesssim \int \abs{D^2\K \ast u} \abs{\grad \phi}^2 dx.
\end{equation*}
By H\"older's inequality, Lemma \ref{lem:CZ} and 
$\grad \phi \in L^\infty(D_T)$ for $p \geq 2$,
\begin{align}
\int \abs{\conv{D^2\K}{u}} \abs{\grad \phi}^2 dx & \leq \norm{\conv{D^2\K}{u}}_p \left(\int \abs{\grad \phi}^{2p/(p-1)}dx \right)^{(p-1)/p} \nonumber \\
 &\lesssim p\norm{u}_p\norm{\grad \phi}_\infty^{2/p}\left(\int \abs{\grad \phi}^{2}dx\right)^{(p-1)/p} \nonumber \\
 & \lesssim p\left( \int \abs{\grad \phi}^{2}dx \right)^{(p-1)/p}, \label{ineq:I2}
\end{align}
where the implicit constant depends only on the uniformly controlled $L^p$ norms of $u$ and $v$. \\

\noindent
As for $I_3$, we compute as in \cite{BertozziSlepcev10}. By the computations in the proof of Lemma \ref{lem:CZ} we may justify
integration by parts on the inside of the convolution, that is, 
\begin{equation*}
\norm{\sum_{j}\int \partial_{i}\K(x-y) \partial_{jj} \phi dx}_{2} \lesssim \norm{\grad \phi}_{2}. 
\end{equation*}
which by Cauchy-Schwarz implies, 
\begin{align} 
 I_3 & \lesssim \norm{v}_{\infty}\norm{\grad \phi}_2^2. \label{ineq:I3}
\end{align}

\noindent
Letting $\eta(t) = \int \abs{\grad \phi(t)}^2 dx$, \eqref{ineq:I2} and \eqref{ineq:I3} imply the differential inequality,
\begin{equation*}
\frac{d}{dt}\eta(t) \leq \hat{C}p\max(\eta(t)^{1-1/p},\eta(t)),
\end{equation*}
where $\hat{C}$ again depends only on the uniformly controlled $L^p$ norms of $u,v$.
The differential equality does not have a unique solution, but all of the solutions are absolutely continuous integral solutions bounded above by the maximal solution $\overline{\eta}(t)$. 
By continuity, for $t < 1/\hat{C}$ the maximal solution is given by $\overline{\eta}(t) = (\hat{C}t)^p$, hence, 
\begin{equation*}
\eta(t) \leq \overline{\eta}(t) = (\hat{C} t)^p.  
\end{equation*}
For $t < 1/(2\hat{C})$ we then have 
\begin{equation*}
\eta(t) \leq \overline{\eta}(t) \leq 2^{-p},  
\end{equation*}
and we take $p \rightarrow \infty$ to deduce that for $t \in [0,1/(2\hat{C}))$, $\eta(t) = 0$, therefore the solution is unique. This procedure may be iterated to prove uniqueness over the entire interval of existence since the time interval only depends on uniformly controlled norms.
\end{proof}

\section{Local Existence} \label{sec:LocExist}
\subsection{Local Existence in Bounded Domains}

Let $\tilde{A}(z)$ be a smooth function on $\Real^+$ such that $\tilde{A}'(z)>\eta$ for some $\eta>0$.  In addition, let $\vec{v}$ be a given smooth velocity field with bounded divergence.  Classical theory gives a global smooth solution to the uniformly parabolic equation 
\begin{align}\label{CAE}
u_t = \Delta \tilde{A}(u) - \nabla \cdot \left(u\vec{v}\right)
\end{align}
\noindent   (see \cite{Lieberman96}).  The solutions obey the global $L^\infty$ bound  
\begin{align}\label{RegLinfty}
\left\|u\right\|_{L^\infty(D)}\leq \left\|u_0\right\|_{L^\infty(D)}e^{\left\|\left(\nabla\cdot \vec{v}\right)_{-}\right\|_{L^\infty(\pd)}t}.
\end{align}  
  We take advantage of this theory to prove existence of weak solutions to \eqref{AE} by regularizing the degenerate diffusion and the kernel.  Consider the modified aggregation equation
\begin{align}\label{RAE}
\reg{u}_t =  \Delta \reg{A}(\reg{u})- \nabla \cdot \left(\reg{u}\left(\nabla \mollify{\kernel} \ast \reg{u}\right)\right),    
\end{align}
with corresponding no-flux boundary conditions \eqref{cond:no_flux}.  We define 
\begin{align}\label{ParabolicReg}
\reg{A}(z) = \int^z_0 a_\epsilon'(z) dz,
\end{align}
\noindent where $a_\epsilon'(z)$ is a smooth function, such that $A'(z)+\epsilon \leq a_\epsilon'(z) \leq A'(z)+2\epsilon$, and the standard mollifier is denoted $\mathcal{J_\epsilon}v$.  We first prove existence of solutions to the regularized equation \eqref{RAE}, this is stated formally in the following proposition.
\begin{proposition}[Local Existence for the Regularized Aggregation Diffusion Equation]\label{LERADD}
Let $\epsilon>0$ be fixed and $u_0(x)\in C^{\infty}(\overline{D})$, then \eqref{RAE} has a classical solution $u$ on $\pd$ for all $T>0$.
\end{proposition}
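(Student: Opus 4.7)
The plan is to solve \eqref{RAE} by a Picard-type iteration built on the classical theory for uniformly parabolic quasilinear equations, exploiting the smoothness introduced by the mollifier. The crucial observation is that $\mollify{\kernel} \in C^\infty(\overline{D})$, so for any nonnegative $w \in L^1(D)$ the vector field $\vec{v}_w := \grad\mollify{\kernel} \ast w$ and all of its derivatives are bounded uniformly in terms of $\norm{w}_1$ alone, with constants depending only on $\epsilon$ and the order of the derivative. Moreover $\reg{A}'(z) \geq \epsilon > 0$, so the diffusion is uniformly parabolic.

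I would construct iterates as follows. Set $u^{(0)} \equiv u_0$, and given a smooth, nonnegative $u^{(n)}$, let $u^{(n+1)}$ be the unique global classical solution (guaranteed by \cite{Lieberman96}, applied to the prototype \eqref{CAE} with $\vec{v} = \vec{v}^{(n)} := \grad\mollify{\kernel}\ast u^{(n)}$) of
\begin{equation*}
\partial_t u^{(n+1)} = \Delta \reg{A}(u^{(n+1)}) - \grad\cdot(u^{(n+1)} \vec{v}^{(n)}),
\end{equation*}
with the no-flux boundary conditions and initial data $u_0$. The parabolic maximum principle gives $u^{(n+1)} \geq 0$, and integrating the equation against $1$ using the no-flux conditions yields mass conservation $\norm{u^{(n+1)}(t)}_1 = M$ for all $t$. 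Therefore $\norm{\grad\cdot\vec{v}^{(n)}}_{L^\infty(\pd)} \leq C(\epsilon) M$ uniformly in $n$, and \eqref{RegLinfty} then delivers a uniform bound $\norm{u^{(n+1)}(t)}_\infty \leq \norm{u_0}_\infty e^{C(\epsilon)M t}$ on every finite interval $[0,T]$.

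Next, I would show that $\set{u^{(n)}}$ is Cauchy in $C([0,\tau]; L^2(D))$ for a small $\tau = \tau(\epsilon,M,\norm{u_0}_\infty) > 0$. Setting $w^n = u^{(n+1)} - u^{(n)}$ and testing the difference equation against $w^n$, the diffusion contribution becomes $-\int (\reg{A}(u^{(n+1)})-\reg{A}(u^{(n)}))(u^{(n+1)}-u^{(n)})\,dx \leq 0$ by monotonicity of $\reg{A}$, while the two transport terms are estimated by Cauchy--Schwarz using the uniform smoothness of $\vec{v}^{(n)}$ and $\vec{v}^{(n-1)}-\vec{v}^{(n-2)} = \grad\mollify{\kernel}\ast w^{n-2}$, together with the uniform $L^\infty$ bound on $u^{(n)}$. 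This produces a differential inequality of the form
\begin{equation*}
\tfrac{d}{dt}\norm{w^n(t)}_2^2 \leq C_\epsilon \norm{w^n(t)}_2^2 + C_\epsilon \norm{w^{n-1}(t)}_2^2,
\end{equation*}
and, since $w^n(0)=0$, Gronwall's inequality yields a contractive recurrence on $[0,\tau]$. The iterates thus converge to a limit $\reg{u}$ in $C([0,\tau];L^2(D))$, and by interpolation against the uniform $L^\infty$ bound they converge in every $C([0,\tau];L^p(D))$, $p<\infty$.

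Having obtained $\reg{u}$, the velocity $\grad\mollify{\kernel}\ast\reg{u}$ is smooth in $x$ with bounds uniform in $t$, so the equation \eqref{RAE} for $\reg{u}$ is uniformly parabolic with smooth coefficients; Schauder estimates \cite{Lieberman96} bootstrap $\reg{u}$ to a classical solution on $[0,\tau]$. Since the a priori bounds on $\norm{\reg{u}(t)}_1$ and $\norm{\reg{u}(t)}_\infty$ do not blow up in finite time, the short-time solution extends to arbitrary $T > 0$ by iteration of the contraction argument. The main obstacle is the contraction step, and it succeeds precisely because $\mollify{\kernel}$ is smooth: this gives uniform control over all derivatives of $\vec{v}^{(n)}$ in terms of $\norm{u^{(n)}}_1$ alone, a quantity that is conserved along the iteration. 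This is exactly what fails for the unregularized problem \eqref{AE}, and is the reason the regularization must be removed separately in the proof of Theorem~\ref{LEADD}.
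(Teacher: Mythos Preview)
Your overall Picard-iteration strategy is sound and is essentially the same route the paper takes by invoking Theorem~12 of \cite{BertozziSlepcev10}: linearize the velocity, use the classical theory for \eqref{CAE}, get mass conservation and the uniform $L^\infty$ bound \eqref{RegLinfty}, then pass to the limit. The crucial observation that the mollified kernel gives $\norm{\grad\cdot\vec{v}^{(n)}}_{L^\infty}\leq C(\epsilon)M$ uniformly in $n$ is exactly right.

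There is, however, a genuine error in your contraction step. Testing the difference equation against $w^n$ in $L^2$ and integrating by parts gives the diffusion contribution
\[
-\int \grad w^n \cdot \grad\bigl(\reg{A}(u^{(n+1)})-\reg{A}(u^{(n)})\bigr)\,dx,
\]
\emph{not} $-\int(\reg{A}(u^{(n+1)})-\reg{A}(u^{(n)}))\,w^n\,dx$. The expression you wrote is what one obtains by testing against $(-\Delta)^{-1}w^n$, i.e.\ by working in $H^{-1}$ as in the uniqueness proof of Section~\ref{sec:Unique}. The actual $L^2$ term is not sign-definite: writing $\reg{A}(u^{(n+1)})-\reg{A}(u^{(n)})=b\,w^n$ with $b\geq\epsilon$, one gets $-\int b|\grad w^n|^2 - \int w^n\grad w^n\cdot\grad b$, and the second piece requires control of $\grad b$, hence uniform $C^1$ bounds on the iterates. (There is also a harmless off-by-one: the velocity difference appearing in the equation for $w^n$ is $\vec{v}^{(n)}-\vec{v}^{(n-1)}=\grad\mollify{\K}\ast w^{n-1}$, not $w^{n-2}$.)

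The fix is easy but must be stated. Either (a) run the contraction in $H^{-1}$, where your monotonicity claim is correct and the transport terms are still harmless because $\mollify{\K}$ is smooth; (b) first establish uniform $C^{1,\alpha}$ bounds on the iterates via Schauder estimates (available since each $\vec{v}^{(n)}$ is smooth with bounds depending only on $M$ and $\epsilon$) and then absorb the $\grad b$ term; or (c) replace contraction by compactness: use the uniform bound on $\norm{\grad\reg{A}(u^{(n)})}_{L^2(\pd)}$ (Lemma~\ref{GradAbdd}) together with a bound on $\partial_t u^{(n)}$ in $H^{-1}$ and Aubin--Lions to extract a convergent subsequence. Option (c) is the route the paper takes via \cite{BertozziSlepcev10}, as signalled by its emphasis on the $L^2$ gradient bound.
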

\noindent We obtain the proof of Proposition \ref{LERADD} directly from Theorem 12 in \cite{BertozziSlepcev10}.  The proof requires a bound on $\norm{\nabla A^\epsilon}_{L^2(\pd)}$, for some $T>0$.  We state this lemma for completeness but reference the reader to \cite{BertozziSlepcev10} for a proof.  

\begin{lemma}[Uniform Bound on Gradient of $A(u)$]\label{GradAbdd}
Let $\epsilon>0$ be fixed and $u^{\epsilon}\in L^\infty(\pd)$ be a solution to \eqref{RAE}.  There exists a constant $C = C(T,\left\|\nabla \mollify{\kernel}\ast u^{\epsilon}\right\|_{L^\infty(D)}, \left\|u^{\epsilon}\right\|_\infty)$ such that:
\begin{align}\label{bbgradA}
\left\|\nabla A^\epsilon(u^{\epsilon})\right\|_{L^2(\pd)}\leq C.
\end{align}
\end{lemma}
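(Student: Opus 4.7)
The plan is to test the regularized equation \eqref{RAE} against $\reg{A}(\reg{u})$ itself, which is an admissible (in fact smooth) test function since Proposition \ref{LERADD} guarantees a classical solution $\reg{u}$. This pairing produces the $L^2$ norm of $\nabla \reg{A}(\reg{u})$ directly via integration by parts on the diffusive term, and the \emph{combined} no-flux boundary condition \eqref{cond:no_flux} applied to the full flux $-\nabla \reg{A}(\reg{u}) + \reg{u}\, \nabla \mollify{\kernel} \ast \reg{u}$ ensures the boundary contribution cancels cleanly — I do not need (nor have) a Neumann condition on $\reg{A}(\reg{u})$ alone.

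Concretely, I would define the primitive $\Psi^\epsilon(z) := \int_0^z \reg{A}(s)\, ds$, which is non-negative on $[0,\infty)$ and satisfies $\partial_t \Psi^\epsilon(\reg{u}) = \reg{A}(\reg{u})\,\reg{u}_t$ pointwise. Multiplying \eqref{RAE} by $\reg{A}(\reg{u})$, integrating over $D$, and using the combined no-flux condition to annihilate the boundary integral
\[
\int_{\partial D} \reg{A}(\reg{u})\bigl[\nabla \reg{A}(\reg{u}) - \reg{u}\, \nabla \mollify{\kernel} \ast \reg{u}\bigr] \cdot \nu \, dS,
\]
yields the identity
\[
\frac{d}{dt}\into \Psi^\epsilon(\reg{u}) \, dx + \into \abs{\nabla \reg{A}(\reg{u})}^2 \, dx = \into \reg{u}\,(\nabla \mollify{\kernel} \ast \reg{u}) \cdot \nabla \reg{A}(\reg{u}) \, dx.
\]
Applying Young's inequality to the right-hand side absorbs half of $\norm{\nabla \reg{A}(\reg{u})}_2^2$ into the left, giving
\[
\frac{d}{dt}\into \Psi^\epsilon(\reg{u}) \, dx + \frac{1}{2}\into \abs{\nabla \reg{A}(\reg{u})}^2 \, dx \leq \frac{\abs{D}}{2}\,\norm{\reg{u}}_\infty^2 \, \norm{\nabla \mollify{\kernel} \ast \reg{u}}_\infty^2.
\]

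Finally, I would integrate in time over $[0,T]$, discard the non-negative endpoint term $\into \Psi^\epsilon(\reg{u}(T))\, dx$, and observe that $\into \Psi^\epsilon(u_0)\, dx \leq \abs{D}\,\max_{0 \leq z \leq \norm{u_0}_\infty}\Psi^\epsilon(z)$ is a finite constant since $u_0 \in L^\infty(D)$ and $\Psi^\epsilon$ is continuous. Rearranging gives the claimed bound on $\norm{\nabla \reg{A}(\reg{u})}_{L^2(\pd)}^2$ with $C$ depending only on $T$, $\norm{\reg{u}}_\infty$, $\norm{\nabla \mollify{\kernel} \ast \reg{u}}_\infty$, the domain, and the data. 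There is no substantive obstacle — this is a standard energy identity for degenerate parabolic equations — the one subtlety worth flagging is the correct handling of the boundary term by invoking the combined no-flux condition on the total flux, since neither $\nabla \reg{A}(\reg{u})\cdot \nu$ nor $(\reg{u}\, \nabla \mollify{\kernel} \ast \reg{u})\cdot \nu$ vanishes individually on $\partial D$.
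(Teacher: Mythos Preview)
Your argument is correct and is exactly the standard energy method one would use here: pair the equation with $\reg{A}(\reg{u})$, use the primitive $\Psi^\epsilon$ for the time term, kill the boundary contribution via the combined no-flux condition on the total flux, and absorb the advective term with Young's inequality. The paper itself does not supply a proof of this lemma but defers to \cite{BertozziSlepcev10}; your approach matches what one finds there. One minor remark: to align with the paper's subsequent Remark that the bound is independent of $\epsilon$, you may want to note explicitly that $\Psi^\epsilon(z) \leq \int_0^z A(s)\,ds + \epsilon z^2$, so the initial-data contribution $\into \Psi^\epsilon(u_0)\,dx$ is controlled uniformly in $\epsilon \in (0,1]$ by $\norm{u_0}_\infty$ and $\abs{D}$.
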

\begin{remark}
The estimate given by \eqref{bbgradA} is independent of $\epsilon$.  
\end{remark}

\noindent Proposition \ref{LERADD} gives a family of solutions $\{u^{\epsilon}\}_{\epsilon>0}$.  To prove local existence to the original problem \eqref{AE} we first need some a priori estimates which are independent of $\epsilon$.  Mainly, we obtain an independent-in-$\epsilon$ bound on the $L^\infty$ norm of the solution and the velocity field.  This is the main difference in the local existence theory from
\cite{BertozziSlepcev10}.  Due to the singularity of the kernels significantly more is required to obtain these a priori bounds.  We first state a lemma, due to Kowalczyk \cite{Kowalczyk05} and extended to $d > 2$ and $\Real^d$ in \cite{CalvezCarrillo06}. The proof is based on the Alikakos iteration. 

\begin{lemma}[Iteration Lemma \cite{Kowalczyk05,CalvezCarrillo06}] \label{dynamic}
Let $0 < T\leq \infty$ and assume that there exists a $c>0$ and $u_c>0$ such that $A'(u)>c$ for all $u>u_c$. Then if $\left\|\nabla \kernel\ast u\right\|_{\infty} \leq C_1$ on $[0,T]$ then $\left\|u\right\|_\infty \leq C_2(C_1)\max\{1,M,\left\|u_0\right\|_\infty\}$ on the same time interval. 
\end{lemma}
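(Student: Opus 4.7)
The plan is to carry out an Alikakos/Moser iteration on truncated $L^p$ norms of $\reg{u}$ (the classical solution of the regularized problem \eqref{RAE}), yielding an $\epsilon$--independent $L^\infty$ bound; the lemma then follows by passing $\epsilon\to 0$ in the a priori estimate. First, I would fix a truncation level $k\geq u_c$ (to be chosen of order $\max\{1,M,\|u_0\|_\infty\}$) and an exponent $p\geq 2$, and test the equation against $(u-k)_+^{p-1}$. Since $A'(u)\geq c$ on $\{u>k\}$, integration by parts together with the no-flux condition \eqref{cond:no_flux} produces a diffusion dissipation term of the form $-(4(p-1)c/p^2)\int|\nabla(u-k)_+^{p/2}|^2\,dx$. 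For the advection term I would integrate by parts, split $u=(u-k)_++k$ on $\{u>k\}$, and use $\|\nabla\K\ast u\|_\infty\leq C_1$ together with Cauchy--Schwarz and Young's inequality to absorb a fraction of the dissipation, arriving at a differential inequality schematically of the form
\[
\frac{d}{dt}\int(u-k)_+^p\,dx+\frac{\alpha c}{p}\int\bigl|\nabla(u-k)_+^{p/2}\bigr|^2\,dx\leq\beta(p,k,C_1,c)\int(u-k)_+^p\,dx.
\]

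Next, I would close the inequality by combining Sobolev embedding with Gagliardo--Nirenberg interpolation. With $f=(u-k)_+^{p/2}$, Sobolev gives $\|f\|_{2d/(d-2)}^2\lesssim\|\nabla f\|_2^2$ and interpolation yields
\[
\int(u-k)_+^{p(1+2/d)}\,dx\lesssim\left(\int(u-k)_+^p\,dx\right)^{2/d}\int|\nabla f|^2\,dx,
\]
so the dissipation controls a strictly higher $L^q$ norm. To seed the iteration I would choose $k_0=c_0\max\{1,M,\|u_0\|_\infty\}$ large enough that Chebyshev's inequality ($|\{u>k_0\}|\leq M/k_0$) and the pointwise bound $\reg{u}\leq\|u_0\|_\infty e^{\|(\nabla\cdot\vec v)_-\|_\infty t}$ from \eqref{RegLinfty} make the initial $L^2$ norm of $(u-k_0)_+$ of the desired order. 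Iterating along $p_n=2^n$ and summing the resulting geometric recursion — the standard Alikakos/Moser argument — gives
\[
\sup_{n}\|(u-k_0)_+\|_{p_n}^{1/p_n}\leq C(C_1,c,u_c),
\]
hence $\|u\|_\infty\leq k_0+C(C_1,c,u_c)\leq C_2(C_1)\max\{1,M,\|u_0\|_\infty\}$ on $[0,T]$.

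The principal obstacle is bookkeeping: the constants $\beta(p,k,C_1,c)$ must grow at most polynomially in $p$ so that the Moser summation converges, and the final bound on $\|u\|_\infty$ must stay \emph{linear} in $\max\{1,M,\|u_0\|_\infty\}$ rather than merely polynomial. The linearity is engineered by choosing the truncation level $k_0$ itself proportional to $\max\{1,M,\|u_0\|_\infty\}$ from the outset, so that the ``$k$-piece'' produced by the splitting $u=(u-k)_++k$ in the advection term carries the correct homogeneity. A subsidiary technical point is that the estimate must be performed on the regularized solutions, where all integrations by parts are rigorously justified; because none of the constants $c$, $u_c$, $C_1$, $M$, or $\|u_0\|_\infty$ depend on $\epsilon$, the resulting $L^\infty$ bound passes to the limit.
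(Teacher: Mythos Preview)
The paper does not supply its own proof of this lemma; it is quoted from Kowalczyk and Calvez--Carrillo with the single remark that ``the proof is based on the Alikakos iteration.'' Your proposal implements precisely that scheme, so at the level of strategy there is nothing to compare.

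One step in your outline should be corrected, however. You invoke the pointwise estimate \eqref{RegLinfty} to help seed the iteration, but that bound requires control of $\|(\nabla\cdot\vec v)_-\|_{L^\infty}=\|(\Delta\mathcal{J}_\epsilon\K\ast\reg u)_-\|_{L^\infty}$, which the hypothesis $\|\nabla\K\ast u\|_\infty\leq C_1$ does \emph{not} provide; for admissible kernels as singular as the Newtonian potential, $\Delta\K\ast u$ is generically not in $L^\infty$, and the mollified version is not uniformly bounded as $\epsilon\to 0$. Fortunately this ingredient is unnecessary: once $k_0\geq\|u_0\|_\infty$ the truncation $(u_0-k_0)_+$ vanishes identically, so every $L^p$ norm starts at zero, and the seed of the Moser recursion is simply the mass bound $\|(u-k_0)_+\|_1\leq M$ valid for all $t$. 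With that replacement the rest of your outline (the $p$-energy inequality using only $\|\nabla\K\ast u\|_\infty$, the Gagliardo--Nirenberg step, and the geometric summation along $p_n=2^n$) goes through as in the cited references.
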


\begin{lemma}[$L^\infty$ Bound of Solution] \label{LinftyG}
Let $\set{u^\epsilon}_{\epsilon > 0}$ be the classical solutions to \eqref{RAE} on $\pd$, with smooth, non-negative, and bounded initial data $\mollify{u_0}$. 
Then there exists $C = C(\norm{u_0}_1,\norm{u_0}_\infty)$ and $T = T(\norm{u_0}_1,\norm{u_0}_p)$ for any $p > d$ such that for all $\epsilon > 0$, 
\begin{align}\label{Linfty}
\left\|u^\epsilon(t)\right\|_{L^\infty(D)}\leq C
\end{align}
\noindent for all $t\in[0,T]$.
\end{lemma}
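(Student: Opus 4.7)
The plan is to set up a bootstrap around Lemma \ref{dynamic}: that iteration lemma promotes any uniform bound on $\vec{v}^\epsilon := \nabla\mollify{\K}\ast u^\epsilon$ into a uniform bound on $\|u^\epsilon\|_\infty$ (hypothesis \textbf{(D2)} on $A$ is exactly what it needs), so the whole task reduces to controlling $\|\vec{v}^\epsilon\|_\infty$ uniformly in $\epsilon$ on a short time interval. Because $\nabla\K$ is singular, this in turn requires a short-time uniform bound on $\|u^\epsilon\|_p$ for some $p > d$, combined with the conserved mass. First I would show the deterministic bound $\|\vec{v}^\epsilon\|_\infty \lesssim_{p,R} \|u^\epsilon\|_1 + \|u^\epsilon\|_p$ whenever $p > d$. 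By Lemma \ref{lem:Newt}, $\nabla\K \in \wkspace{d/(d-1)}_{loc}$, so $\nabla\K \in L^{p'}(B_R)$ for any $p' = p/(p-1) < d/(d-1)$, i.e.\ $p > d$. Splitting the convolution at radius $R$, the near-field is estimated by H\"older against $u^\epsilon\in L^p$ and the far-field by $\|\nabla\K\|_{L^\infty(\abs{x}>R)} \|u^\epsilon\|_1$, which is finite by \textbf{(R)} and the boundedness of $D$. Mollification of $\K$ costs nothing since $\nabla\mollify{\K}\ast u^\epsilon = \nabla\K\ast\mollify{u^\epsilon}$ and $\|\mollify{u^\epsilon}\|_q \leq \|u^\epsilon\|_q$.

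Next I would derive the key short-time $L^p$ estimate. Testing \eqref{RAE} against $p(u^\epsilon)^{p-1}$ and integrating by parts twice with the no-flux condition yields
\begin{equation*}
\frac{d}{dt}\|u^\epsilon\|_p^p = -p(p-1)\int (u^\epsilon)^{p-2} a_\epsilon'(u^\epsilon)|\nabla u^\epsilon|^2\,dx - (p-1)\int (u^\epsilon)^p\,\Delta\mollify{\K}\ast u^\epsilon\,dx.
\end{equation*}
The dissipation term is non-positive and is discarded. The aggregation term is bounded via H\"older combined with the Calder\'on-Zygmund estimate of Lemma \ref{lem:CZ}, applied to $\mollify{\K}$; the relevant constants are $\epsilon$-independent because mollification preserves every admissibility property used in Section \ref{sec:PropAdmKern}. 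A H\"older split with exponents close to $(1,\infty)$ produces $L^r$ norms of $u^\epsilon$ at a level slightly above $p$, which I would interpolate between the conserved mass $\|u^\epsilon\|_1 = M$ and $\|u^\epsilon\|_p$ to obtain a closed Bernoulli-type differential inequality
\begin{equation*}
\frac{d}{dt}\|u^\epsilon\|_p^p \leq C(p,M)\bigl(1 + \|u^\epsilon\|_p^p\bigr)^{\beta}
\end{equation*}
for some $\beta>1$. This yields a uniform bound $\|u^\epsilon(t)\|_p \leq C(\|u_0\|_p,M)$ on a time interval $[0,T]$ with $T = T(\|u_0\|_p,M)$ independent of $\epsilon$.

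Combining this with Step~1 gives a uniform bound on $\|\vec{v}^\epsilon\|_\infty$ over $[0,T]$; applying Lemma \ref{dynamic} then delivers
\begin{equation*}
\|u^\epsilon(t)\|_\infty \leq C_2 \max\{1,M,\|u_0\|_\infty\}
\end{equation*}
on $[0,T]$, which is exactly the claim, with the $T$-dependence on $\|u_0\|_p$ inherited from Step~2 and the $C$-dependence on $\|u_0\|_\infty$ and $M$ inherited from the iteration lemma. The delicate point is Step~2: the H\"older/Calder\'on-Zygmund bound on the aggregation term does not close directly at level $p$, so the interpolation against the conserved mass is essential for producing a differential inequality purely in $\|u^\epsilon\|_p$. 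The fact that the mollified kernel $\mollify{\K}$ inherits all the admissibility estimates of Section \ref{sec:PropAdmKern} with $\epsilon$-independent constants is what keeps the entire argument uniform in $\epsilon$, which is crucial for passing to the limit later in the local existence proof.
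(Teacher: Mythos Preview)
Your Step~2 contains a genuine gap. After discarding the dissipation term you are left with
\begin{equation*}
\frac{d}{dt}\norm{u^\epsilon}_p^p \lesssim \abs{\int (u^\epsilon)^p\, \Delta\mollify{\K}\ast u^\epsilon\, dx} \lesssim \norm{u^\epsilon}_{p+1}^{p+1},
\end{equation*}
and the proposed interpolation between $\norm{u^\epsilon}_1$ and $\norm{u^\epsilon}_p$ cannot control $\norm{u^\epsilon}_{p+1}$: for $q>p$ there is no inequality of the form $\norm{u}_q \le C(\norm{u}_1)\norm{u}_p^\beta$. (In one dimension, take $u = A\mathbf{1}_{[0,A^{-p}]}$ together with a low-amplitude tail fixing the mass; then $\norm{u}_1$ and $\norm{u}_p$ stay bounded while $\norm{u}_{p+1}^{p+1}\sim A\to\infty$.) Any H\"older/Calder\'on--Zygmund split of the aggregation term carries, by scaling, a total $u$-weight of $p+1$ and therefore cannot be closed at level $p$ using mass conservation alone. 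Your remark that the estimate ``does not close directly at level $p$'' is exactly right; the point is that interpolation against $L^1$ does not repair this.

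The paper does \emph{not} discard the dissipation. It works with $u_k = (u^\epsilon - k)_+$ so that $A'(u)>c$ uniformly on $\set{u>k}$ by \textbf{(D2)}, retains the term $-c\int\abs{\nabla u_k^{p/2}}^2\,dx$, and then uses the Gagliardo--Nirenberg--Sobolev inequality (Lemma~\ref{lem:GNS}) to bound $\norm{u_k}_{p+1}^{p+1}$ by a small multiple of $\norm{\nabla u_k^{p/2}}_2^2$ plus a power of $\norm{u_k}_p$. The gradient piece is absorbed back into the dissipation, and only then does a closed differential inequality in $\norm{u_k}_p$ emerge. Your Step~1 (the direct near-field/far-field bound on $\vec v^\epsilon$) is fine and in fact a bit more elementary than the paper's route through $W^{1,p}$ and Morrey's inequality; but Step~2 is where the real difficulty lies, and there the dissipation term cannot be thrown away.
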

\begin{proof}
For simplicity we drop the $\epsilon$.  The first step is to obtain an interval for which the $L^p$ norm of $u$ is bounded.  Following the work of \cite{JagerLuckhaus92} we define the function $\reg{u}_{k}=(\reg{u}-k)_+$, for $k>0$.  Due to conservation of mass the following inequality provides a bound for the $L^p$ norm of $u$ given a bound on the $L^p$ norm of $u_k$,  
\begin{align}\label{UkvsU}
\norm{u}_p^p \leq C(p)(k^{p-1}\norm{u}_1 + \norm{u_k}_p^p).  
\end{align}  
We look at the time evolution of $\left\|u_k\right\|_{p}$ and make use of the parabolic regularization \eqref{ParabolicReg}. \\
\\
{\it Step 1:}  
\begin{align*}
\frac{d}{dt}\left\|u_k\right\|_p^p &= p\int u_k^{p-1}\nabla \cdot\left(\nabla \reg{A}(u) - u\nabla \mollify{\kernel}\ast u\right) dx \\
& = -p(p-1)\int A^{\epsilon'} \nabla u_k \cdot\nabla u dx - p(p-1)\int uu^{p-2}_k\nabla \mollify{\kernel}\ast u\; dx.  \\ 
&\leq  -\frac{4(p-1)}{p} \int A'(u)\left|\nabla u_k^{p/2}\right|^2dx + p(p-1)\int u_k^{p-1}\nabla u_k \cdot \nabla \mollify{\kernel}\ast u\;dx\\
&\quad + kp(p-1)\int u_k^{p-2}\nabla u_k\cdot \nabla \mollify{\kernel}\ast u\;dx,
\end{align*}
where we used the fact that for $l>0$
\begin{align}\label{UeqUk}
u(u_k)^l = (u_k)^{l+1} + ku_k^l.  
\end{align}
\noindent Hence, integrating by parts once more gives
\begin{align*}
\frac{d}{dt}\left\|u_k\right\|^p_p & \leq \frac{4(p-1)}{p} \int A'(u)\left|\nabla u_k^{p/2}\right|^2dx - (p-1) \int u_k^p \Delta \mollify{\K}\ast u dx- kp\int u_k^{p-1} \Delta \mollify{\K}\ast u dx\\
& \leq -C(p)\hspace{-5pt}\int \hspace{-5pt} A'(u)\left|\nabla u_k^{p/2}\right|^2dx + C(p)\left\|u_k\right\|_{p+1}^{p}\left\|\Delta \mollify{\K} \ast u\right\|_{p+1} + C(p) k\left\|u_k\right\|_{p}^{p-1}\left\|\Delta \mollify{\K} \ast u\right\|_{p} \\
& \leq -C(p) \hspace{-5pt} \int \hspace{-5pt}A'(u)\left|\nabla u_k^{p/2}\right|^2dx + C(p)\left(\left\|u_k\right\|_{p+1}^{p+1} + \left\|u\right\|^{p+1}_{p+1}\right) + C(p)k\left(\left\|u_k\right\|_{p}^{p} + \left\|u\right\|^p_{p}\right).
\end{align*}
In the last inequality we use Lemma \ref{lem:CZ}.  Now, using \eqref{UkvsU} we obtain that
\begin{align*}
\frac{d}{dt}\left\|u_k\right\|_p^p dx &\leq -C(p)\hspace{-5pt}\int \hspace{-5pt} A'(u)\left|\nabla u_k^{p/2}\right|^2dx + C(p)\left\|u_k\right\|_{p+1}^{p+1} + C(p,k)\left\|u_k\right\|_{p}^{p} + C(p,k,M).
\end{align*} 
An application of the Gagliardo-Nirenberg-Sobolev inequality gives that for any $p$ such that $d<2(p+1)$ (see Lemma \ref{lem:GNS} in the Appendix):
\begin{align*}
\left\|u\right\|_{p+1}^{p+1}\lesssim \left\|u\right\|^{\alpha_2}_{p}\left\|u^{p/2}\right\|^{\alpha_1}_{W^{1,2}},
\end{align*}
where $\alpha_1 = d/p,\; \alpha_2 = 2(p+1)-d$.  From the inequality $a^rb^{(1-r)} \leq ra + (1-r)b$ (using that
$a= \delta \left\|u^{p/2}\right\|^2_{W^{1,2}}$ and $r=\alpha_1/2$) we obtain  
\begin{align*}
\left\|u\right\|_{p+1}^{p+1}&\lesssim \frac{1}{\delta^{\beta_1}}\left\|u\right\|^{\beta_2}_{p} + r\delta^2\left\|\nabla u^{p/2}\right\|^2_2 + r\delta^2 \norm{u}^p_p.
\end{align*}
Above $\beta_1,\;\beta_2 > 1$.  For $k$ large enough we have that $A'(u)>c>0$ over $\set{u > k}$; hence, if we choose $\delta$ small enough we obtain the final differential inequality:
\begin{align}\label{DILp}
\frac{d}{dt}\left\|u\right\|^p_p \lesssim C(p)\left\|u_k\right\|^{\beta_2}_p + C(p,k,r\delta) \left\|u_k\right\|_{p}^{p} + C(p,k,\left\|u_0\right\|_{1}).
\end{align}
The inequality \eqref{DILp} in turns gives a $T_p=T(p)>0$ such that $\left\|u_k\right\|_p$ is bounded on $[0,T_p]$.  Inequality \eqref{UkvsU} gives that $\left\|u\right\|_p$ remains bounded on the same time interval.  Next we prove that the velocity field is bounded in $L^\infty(D)$ on some time interval $[0,T]$.  This then allows us invoke Lemma \ref{dynamic} and obtain the desired bound. \\
\\
{\it Step 2:}\\
\\
\noindent 
Since $\grad \K \in L_{loc}^1$ and $\grad \K \mathbf{1}_{\Real^d \setminus B_1(0)} \in L^q$ for all $q > d/(d-1)$ (by Lemma \ref{lem:Newt}), we have for all $p > d/(d-1)$, 
\begin{equation*}
\norm{\vec{v}}_{p} = \norm{\conv{\grad \K}{u}}_p \leq \norm{\grad \K \mathbf{1}_{B_1(0)}}_1\norm{u}_p + \norm{\grad \K \mathbf{1}_{\Real^d\setminus B_1(0)}}_pM.
\end{equation*}
By Lemma \ref{lem:CZ} we also have, for all $p, 1 < p < \infty$, 
\begin{equation*}
\norm{\grad \vec{v}}_{p} = \norm{\conv{D^2\K}{u}}_p \lesssim \norm{u}_p. 
\end{equation*} 
By Morrey's inequality we have $\vec{v} \in L^\infty(D_T)$ by choosing some $p > d$ and invoking step one, and Lemma \ref{dynamic} concludes the proof. Note that the bound depends on the geometry of the domain through the constant on the Gagliardo-Nirenberg-Sobolev inequality (Lemma \ref{lem:GNS}). However, this constant is related to the regularity of the domain, and not directly to the diameter of the domain.
\end{proof}
\noindent In addition to the a priori estimates the proof of Theorem \ref{LEADD} requires precompactness of $\{u^\epsilon\}_{\epsilon > 0}$ in $L^1(\pd)$.

\begin{lemma}[Precompactness in $L^1(\Omega_T)$]\label{precomp}   
The sequence of solutions obtained via Proposition \ref{LERADD}, $\{u_\epsilon\}_{\epsilon > 0}$, which exist on $[0,T]$, is precompact in $L^1(\pd)$.  
\end{lemma}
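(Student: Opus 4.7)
The plan is to verify the Riesz-Fr\'echet-Kolmogorov criterion for precompactness in $L^1(\pd)$ by producing uniform equicontinuity of $\{u^\epsilon\}$ with respect to translations in both space and time, while the uniform $L^\infty$ bound from Lemma \ref{LinftyG} gives equi-integrability for free.

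First I would collect the uniform bounds at hand: $\norm{u^\epsilon}_{L^\infty(\pd)} \leq C$ (Lemma \ref{LinftyG}), $\norm{\nabla A^\epsilon(u^\epsilon)}_{L^2(\pd)} \leq C$ (Lemma \ref{GradAbdd}), and $\norm{\nabla \mathcal{J}_\epsilon \kernel \ast u^\epsilon}_{L^\infty(\pd)} \leq C$ (contained in the proof of Lemma \ref{LinftyG}). Writing the equation \eqref{RAE} in divergence form, the flux $-\nabla A^\epsilon(u^\epsilon) + u^\epsilon \nabla \mathcal{J}_\epsilon \kernel \ast u^\epsilon$ is uniformly bounded in $L^2(\pd)$, so $u^\epsilon_t$ is uniformly bounded in $L^2(0,T; H^{-1}(D))$.

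For spatial equicontinuity I would exploit the $L^2(0,T; H^1(D))$ bound on $A^\epsilon(u^\epsilon)$, which yields the standard translation estimate
\begin{equation*}
\int_0^T \int_D \abs{A^\epsilon(u^\epsilon)(x+h,t) - A^\epsilon(u^\epsilon)(x,t)}^2 \, dx \, dt \lesssim \abs{h}^2.
\end{equation*}
The substance is transferring this to a modulus for $u^\epsilon$ itself despite the degeneracy of $A$ at the origin. By construction $\abs{A^\epsilon(z) - A(z)} \leq 2\epsilon z$, so on the compact interval $[0, M_\infty]$ the maps $A^\epsilon$ converge uniformly to the continuous, strictly increasing $A$; a standard compactness argument then furnishes a modulus $\omega(\delta) \downarrow 0$, independent of $\epsilon$ for $\epsilon$ small, such that $\abs{A^\epsilon(z_1) - A^\epsilon(z_2)} \leq \delta$ forces $\abs{z_1 - z_2} \leq \omega(\delta)$ on $[0, M_\infty]$. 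Splitting $\int \abs{u^\epsilon(x+h,t) - u^\epsilon(x,t)} \, dx$ according to whether the corresponding $A^\epsilon$-difference is below $\delta$ (handled by $\omega(\delta)\abs{D}$) or above $\delta$ (whose measure is at most $C\abs{h}^2/\delta^2$ by Chebyshev, on which the integrand is bounded by $2M_\infty$), then optimizing in $\delta$, yields the spatial $L^1$-equicontinuity uniformly in $\epsilon$.

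For temporal equicontinuity I would observe that $\norm{u^\epsilon(\cdot,t+\tau) - u^\epsilon(\cdot,t)}_{H^{-1}(D)} \lesssim \tau^{1/2}$ uniformly in $\epsilon$, obtained by integrating the weak form of \eqref{RAE} against $H_0^1$ test functions and using Cauchy-Schwarz against the uniform $L^2$ flux bound. Combining spatial $L^1$-equicontinuity, temporal $H^{-1}$ smallness, and the $L^\infty$ bound through a nonlinear Aubin-Lions / Simon-type argument (in the spirit of Alt-Luckhaus) concludes the proof. The main obstacle is the transfer of regularity across the degenerate nonlinearity $A^\epsilon$; the uniform $L^\infty$ bound together with uniform convergence $A^\epsilon \to A$ on $[0, M_\infty]$ is precisely what produces the uniform inverse modulus that carries us through.
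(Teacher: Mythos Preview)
Your proposal is correct and takes essentially the same approach as the paper, which defers entirely to \cite{BertozziSlepcev10}: verify Riesz--Fr\'echet--Kolmogorov using the uniform $L^2(0,T;H^1)$ bound on $A(u^\epsilon)$, transfer spatial equicontinuity from $A^\epsilon(u^\epsilon)$ to $u^\epsilon$ via a uniform inverse modulus on $[0,M_\infty]$, and handle time translates through the $L^2(0,T;H^{-1})$ bound on $u^\epsilon_t$. The only slight tension is that you frame the plan as verifying Riesz--Fr\'echet--Kolmogorov but then close the temporal step with an Aubin--Lions/Simon/Alt--Luckhaus argument rather than producing $L^1$ time-translation equicontinuity directly; either route works and both are standard for degenerate parabolic problems, so this is a matter of presentation rather than substance.
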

\noindent The proof of Lemma \ref{precomp} follows exactly the work in \cite{BertozziSlepcev10}.  The key is to prove that the sequence satisfies the Riesz-Frechet-Kolmogorov Criterion.  This relies on the fact that $\norm{A(u^\epsilon)}_{L^2(0,T;H^1(D))} \leq C$ uniformly.     

\begin{proof}(\textbf{Theorem \ref{LEADD}})
For a  given $\epsilon>0$, if we regularize the initial condition $\reg{u}_0(x) = \mollify{u_0(x)}$, Proposition \ref{LERADD} gives a solution $\reg{u}$ to \eqref{RAE}.  Furthermore, the proof of Proposition \ref{LERADD} and Lemma \ref{LinftyG} gave uniform-in-$\epsilon$ bounds on $\left\|\reg{A}(u)\right\|_{L^2(0,T,H^1(D))}$, $\left\|\reg{u}\right\|_{L^\infty(\pd)}$, and $\left\|\reg{u}_t\right\|_{L^2(0,T,H^{-1}(D))}$.  By Lemma
\ref{LinftyG}, all solutions exist on $[0,T]$, with $T$ independent of $\epsilon$.  Also, recalling that $\reg{A}(z)\geq A(z)$ and $a'_\epsilon(z)\geq A'(z)$ gives that
\begin{align*}
\left\|A(\reg{u})\right\|_{L^2(0,T,H^1(D))}\leq C,
\end{align*}
\noindent where $C$ is independent of $\epsilon$.  Since $L^2(0,T,H^1(D))$ is weakly compact there exists a $\rho$ such that some subsequence of $\left\{\reg{u} \right\}_{\epsilon>0}$ converges weakly, i.e $A(u^{\epsilon_j})\rightharpoonup \rho$ in $L^2(0,T,H^1(D))$. Precompactness in $L^1$ implies strong convergence of $u^{\epsilon_j}$ to some $u\in L^1(\pd)$; therefore,  $A(u) =\rho$.  In fact, the $L^\infty(\pd)$ bound on $u^{\epsilon_j}$ gives strong convergence in $L^p(\pd)$, for $1\leq p<\infty$, via interpolation.  Also, Young's inequality gives  

\begin{align}\label{L1ConvoConve}
\left\|u^{\epsilon_j}\nabla \mollifyj{\kernel}\ast u^{\epsilon_j}- u\nabla \kernel\ast u\right\|_{L^1(\pd)}& \leq \left\|u\right\|_{L^\infty(\pd)}\left\|\nabla \mollifyj{\kernel} \ast u^{\epsilon_j}-\nabla\kernel \ast u\right\|_{L^1(\pd)}\notag \\
&\quad+ \left\|\nabla \mollifyj{\kernel} \ast u^{\epsilon_j}\right\|_{L^\infty(\pd)}\left\|u^{\epsilon_j}-u\right\|_{L^1(\pd)}\notag \\
& \lesssim \left(\left\|u\right\|_{L^\infty(\pd)}\left\|\nabla \kernel\right\|_{L^1_{loc}} + \left\|\nabla \mollifyj{\kernel} \ast u^{\epsilon_j}\right\|_{L^\infty(\pd)}\right)\left\|u^{\epsilon_j}-u\right\|_{L^1(\pd)}\notag \\
&\quad +\norm{u}_\infty\norm{u^{\epsilon_j}}_{\infty}\norm{\grad \mollifyj \K - \grad \K}_{L^1_{loc}}. 
\end{align}

\noindent Therefore, by interpolation $u$ satisfies \eqref{WF2}.  Furthermore, we obtain that $u\in C([0,T];H^{-1}(D))$.  To prove that $u(t)$ is continuous with respect to the weak $L^2$ topology one uses standard density arguments.  Since $D$ is a bounded, $u$ is therefore also continuous in the weak $L^1$ topology.  To prove continuity in the strong $L^2$ topology we define $F(z) = \int_0^z A(s)ds$ and show that it is continuous in the strong $L^1$ topology.  Indeed,    Lemma \ref{L6} in the Appendix, see \cite{BertozziSlepcev10} for a proof, gives 
\begin{align} \label{ContofF}
\lim_{h\rightarrow 0} \left|\int (F(u(t))-F(u(t+h)))\; dx\right| = \lim_{h\rightarrow 0} \int^{t+h}_t <u_\tau,A(\tau)> \;d\tau.
\end{align}   
\noindent Recall that $\norm{A(u)}_{L^\infty(\pd)}\leq A(\norm{u}_{L^\infty(\pd)})$  and so $A(u) \in L^2(0,T,H^{-1}(D))$.  Therefore, the left hand side of \eqref{ContofF} goes to 0 as $h \rightarrow 0$.  Now, we can invoke Lemma \ref{L9} in Appendix, \cite{BertozziSlepcev10}, to obtain that $u\in C([0,T]; L^2(D))$.  Using interpolation the $L^\infty$ bound of $u$ gives that $u\in C([0,T];L^p(D))$, for $1\leq p<\infty$. 
\end{proof}

\subsection{Local Existence in $\Real^d$}
Now we consider solutions to \eqref{AE} in $\Real^d$ for $d\geq 3$.  We obtain such solution by taking the limit of the solutions in balls centered on the origin with increasing radius $n$, denoted by  $\ball$. 

\begin{proof} ({\bf Theorem \ref{LEADDRd}})
Let $\ball$ be defined as above and consider the truncation of the initial condition on $\ball$, i.e. $u^n_0 = \mathbf{1}_{\ball}u_0$.  By Theorem \ref{LEADD}, we have a family of solutions $\{u_n\}_{n>0}$ on $\ball$ for all $t\in [0,T]$.  Define a new sequence, $\{\tilde{u}_n\}_{n>0}$, where $\tilde{u}_n$ is the zero extension of $u_n$.  The previous work for bounded domains gives the uniform bounds
\vspace{-6pt}
\begin{align}\label{AprioriRd}
\left\|\tilde{u}_n\right\|_{L^\infty(\pdRd)}\quad\;&\leq C_1,\\
\left\|\nabla A(\tilde{u}_n)\right\|_{L^2(\pdRd)}&\leq C_2.  
\end{align}
The bounds may be taken independent of $n$ since the constant in the Gagliardo-Nirenberg-Sobolev inequality, Lemma \ref{lem:GNS}, does not depend directly on the diameter of the domain and may be taken uniform in $n \rightarrow \infty$. \\

\noindent
Therefore, there exist $u,w\in L^2(\pdRd)$ for which $\tilde{u}_n\rightharpoonup u$ and $\nabla A(\tilde{u}_n)\rightharpoonup w$ in $L^2(\pdRd)$.  Furthermore, \eqref{AprioriRd}  implies $\left\|u\right\|_{L^\infty(\pdRd)} \leq C_1.$   
Precompactness of $\left\{\tilde{u}^\epsilon_n \right\}_{\epsilon>0}$ in $L^1(\ball)$ for fixed $n>0$ and Theorem 2.33 in \cite{Adams03} gives that $\{\tilde{u}_n\}_{n>0}$ is precompact in $L^1_{loc}(\pdRd)$.  Therefore, up to a subsequence, not renamed, $\tilde{u}_n \rightarrow u$ in $L^1_{loc}(\pdRd)$; thus, $w=\nabla A(u)$. Also, the $L^\infty$ bound gives that $\tilde{u}_n\rightarrow u$ in $L^p_{loc}(\Real^d)$ for $1\leq p<\infty$.  \\
\\
In addition, we have the estimate
\begin{align}\label{L2convBdd}
\left\|\tilde{u}_n\nabla \kernel\ast \tilde{u}_n\right\|_{L^2(\Real^d_T)}&\leq \left\|\nabla \kernel \ast\tilde{u}_n\right\|_{L^\infty(\Real^d_T)} \left\|\tilde{u}_n\right\|_{L^2(\Real^d_T)}.\end{align}
Therefore, we can extract a subsequence that converges weakly to some $w_1 \in L^2(\Real^d_T)$.  Since $u\mathbf{1}_{\ball} \in L^\infty(0,T,L^1(\Real^d))$ and $u\mathbf{1}_{\ball} \nearrow u$ by monotone convergence $u\in L^\infty(0,T,L^1(\Real^d_T))$.  Once again, from the estimates performed in the bounded domains $\tilde{u}_n\nabla\kernel\ast \tilde{u}_n\rightarrow u\nabla\kernel\ast u$ in $L^1_{loc}(\pdRd)$.  Therefore, we can identify $w_1 = u\nabla \kernel \ast u$.  \\
\\
We now show that $u\in C([0,T];L^1_{loc}(\Real^d))$, which we know to be true, implies that $u \in C([0,T]; L^1(D))$.  Let $t_n \rightarrow t \in [0,T]$ then for all $R > 0$ we have,
\begin{equation} \label{ContL1}
\int \abs{u(t_n) - u(t)}dx = \int_{B_R} \abs{u(t_n) - u(t)}dx +
\int_{\Real^d\setminus B_R} \abs{u(t_n) - u(t)} dx.
\end{equation}
The first term on the right hand side of \eqref{ContL1} can be bounded by $\epsilon/2$, provided $n$ is chosen large enough, since $u\in C([0,T];L^1_{loc}(\Real^d))$.  To bound the second term we first show that $A(u) \in L^1(\pdRd)$.  By \textbf{(D3)} we can deduce $\lim_{z \rightarrow 0}A(z)z^{-1} = 0$. 
Then, for $k>0$ there exists some $0 < C_k < \infty$ such that if $z < k$ then $A(z) \leq Cz$.  Hence, 
\begin{align*}
\int A(u) dx & = \int_{\set{ u < k}}A(u) dx + \int_{\set{u \geq k}} A(u)dx
\\
& \leq CM + A(\norm{u}_\infty) \lambda_u(k) < \infty.
\end{align*}
\noindent Therefore, $\norm{A(u)}_{L^1(\pdRd)} \leq C(M,\norm{u}_\infty)T$.  Now, 
let $w(x)$ be a smooth radially-symmetric cut-off function with $w(x) = 0$
for $\abs{x} < 1/2$ and $w(x) = 1$ for $\abs{x} \geq 1$. Then consider the
quantity, $M_R(t) = \int u w(x/R) dx$. Then formally,
\begin{equation*}
\frac{d}{dt}M_R(t) = \frac{1}{R}\int u v\cdot (\grad w)(x/R)dx +
\frac{1}{R^2} \int A(u) (\Delta w)(x/R) dx.
\end{equation*}
\noindent Estimating terms in $L^\infty$ gives,
\begin{equation*}
\frac{d}{dt}M_R(t) \lesssim \frac{\norm{v}_\infty \norm{u}_1}{R} +
\frac{1}{R^2} \int A(u) dx.
\end{equation*}
\noindent Formally, then
\begin{equation}\label{NoMassEscape}
M_R(t) \lesssim M_R(0) + M\norm{v}_{L^1((0,t);L^\infty)}R^{-1} +
\norm{A(u)}_{L^1( (0,t) \times \Real^d)} R^{-2}.
\end{equation}
\noindent Since $A \in L^1((0,t)\times \Real^d)$ and $M_R(0)
\rightarrow 0$ as $R\rightarrow \infty$, by choosing $R$ sufficiently large, the last term of \eqref{ContL1} can be bounded by $\epsilon/2$.  Hence, implies that $u \in C([0,T]; L^1(\Real^d))$.  Furthermore, via interpolation we obtain that $u \in C([0,T]; L^p(\Real^d))$ for $1 \leq p<\infty$. \\
\\
Conservation of mass can be proved similarly using a cut-off function $w(x)=1$ for $\abs{x}\leq 1/2$ and $w(x)=0$ for $\abs{x}\geq 1$, see the proof of Theorem 15 in \cite{BertozziSlepcev10} for a similar proof. 
\end{proof}
\noindent We are left to prove the energy dissipation inequality \eqref{EnrDiss}.  As expected, the approach is to regularize the energy and take the limit in the regularizing parameters. 

\begin{proof} ({\bf Proposition \ref{prop:EnrDiss}})
Define
\begin{align*}
h(u) = \int_1^u \frac{A'(s)}{s}ds,
\end{align*}
then $\Phi (u) = \int_0^u h(s) ds$. The regularized entropy is defined similarly with $a'_\epsilon(u)$, as defined in \eqref{ParabolicReg}, taking the place of $A'(u)$.  Given a smooth solution $\reg{u}$ to \eqref{RAE} one can verify,
\begin{align}\label{RegDissEq}
\energy_\epsilon (\reg u(t)) + \int_0^t \int \frac{1}{\reg{u}}\abs{a'_\epsilon(\reg{u} )\nabla \reg{u} - \reg{u}\nabla\mollify{\K}\ast \reg{u}}^2 dx d\tau = \energy_\epsilon (\reg u(0)).
\end{align}
Here $\energy_\epsilon(u(t))$ denotes the free energy with the regularized entropy and kernel. 
Once again we take the limit $\epsilon$ approaches zero to obtain \eqref{EnrDiss}.  We first show that the entropy converges.  \\
\\
 {\it Step 1}:  The parabolic regularization gives
\begin{align*}
h(z) + \epsilon \ln z \leq h_\epsilon (z) \leq h(z) + 2\epsilon \ln z & \quad\text{for} \; 1 \leq z,\\
h(z) + 2\epsilon \ln z \leq h_\epsilon (z) \leq h'(z) + \epsilon \ln z & \quad\text{for} \; z\leq 1.
\end{align*}
\noindent Therefore, writing $\Phi(u) = \int_0^1 h(s) ds + \int_1^uh(s) ds$ one observes that 
\begin{align}\label{BoundsOnPhi}
\Phi(u)-2\epsilon \leq \Phi_\epsilon(u) \leq \Phi(u)+ 2\epsilon (u\ln u)_+.
\end{align}
This will allow us to show convergence of the entropy.  In fact,  
\begin{align*}
\abs{\int \Phi_\epsilon(\reg{u}) -\Phi(u)dx} &\leq \int \abs{ \Phi_\epsilon(\reg{u}) - \Phi(\reg{u}) }dx + \int \abs{ \Phi(\reg{u}) - \Phi(u) }dx\\
\tiny{\eqref{BoundsOnPhi}}& \leq 2\epsilon \int (1+ \reg{u}\ln \reg{u})_+ dx + \norm{\Phi}_{C^1([0,\norm{\reg{u}}_\infty])}\int\abs{\reg{u}-u}dx.\\
& \leq 2\epsilon \left(\abs{D}+\norm{\ln \reg{u}}_\infty\norm{\reg{u_0}}_1\right) + C\norm{\reg{u}-u}_1.    
\end{align*}
\noindent Conservation of mass, boundedness of smooth solutions, and precompactness in $L_{loc}^1$ imply there exists a subsequence, such that as $\epsilon_j \rightarrow 0$, 
\begin{align*}
\int \Phi_{\epsilon_j}(\reg{u}_j)dx \rightarrow \int \Phi(u)dx.  
\end{align*}
\noindent {\it Step 2:}  To show convergence of the interaction energy we need that for $a.e\;t\in (0,T)$ 
\begin{align}\label{L1convEnergy}
\int \reg{u}(t)\mollify{\kernel}\ast \reg{u}(t)dx \rightarrow \int u(t) \kernel\ast u(t) dx.
\end{align}  
 \noindent  Since $\kernel \in L^1_{loc}(D)$ we know that $\left\|\kernel \ast u\right\|_{L^\infty}$ is bounded; hence, replacing $\nabla \kernel$ with $\kernel$ in \eqref{L1ConvoConve} gives the desired result.   Finally, we are left to deal with the entropy production functional.   \\
\\
\noindent {\it Step 3}:  From Lemma 10 in \cite{CarrilloEntDiss01}, 
\begin{align}\label{EPTConv}
\int \frac{1}{u}\abs{A'(u)\nabla u - u\nabla\K\ast u}^2 dx \leq \liminf_{\epsilon\rightarrow 0} \int \frac{1}{\reg{u}}\abs{a'_\epsilon(\reg{u} )\nabla \reg{u} - \reg{u}\nabla\mollify{\K}\ast \reg{u}}^2 dx.
\end{align}
\noindent  We also note that this was proved in \cite{BertozziSlepcev10}.  The proof of \eqref{EPTConv} relies on a result due to Otto in \cite{Otto01}, refer to Lemma \ref{WLS} in the Appendix.  In our case, $\reg{u}\in L^1(\pd)$ and $J_\epsilon =\nabla \reg{A}(\reg{u})-\reg{u}\nabla \kernel\ast \reg{u} \in L^1_{loc}(\pd).$  Furthermore, up to a sequence not renamed, $\reg{u}\rightharpoonup u \in L^2$ and $J_\epsilon \rightharpoonup J$ in $L^2$, therefore, we can apply Lemma \ref{WLS}.  \\
\\
For the energy dissipation estimate in $\Real^d$ we again consider the family of solutions
$\left\{u_r\right\}$ to \eqref{AE}  on $B_r$ (for simplicity let $u_r$ denote the zero-extension of the solutions).  Since $u_n(0) \mathbf{1}_{B_n} \nearrow u(0)$ by monotone convergence we obtain that $\energy(u_n(0))\rightarrow \energy(u(0))$.  Noting that $\kernel \in L^{d/(d-2)}$ allows us to make a modification to \eqref{L2convBdd} and obtain that $u_n\kernel\ast u_n\rightharpoonup u\kernel\ast u$ in
$L^2(\Real^d_T)$.  Furthermore, \eqref{L1convEnergy} implies that $u_n\kernel\ast u_n\rightarrow u \kernel\ast u$ in $L^1_{loc}$.  We are left to verify the uniform integrability over all space.  First note that Morrey's inequality implies
\begin{align*}
\left\|\kernel \ast \tilde{u}_n\right\|_\infty& \lesssim \left\|\nabla \kernel \ast u\right\|_\infty + \left\|\kernel \ast u_n\right\|_p\\
& \leq \left\|\nabla \kernel \ast u\right\|_\infty + \wknorm{\kernel}{d/(d-2)}  \left\|u_n\right\|_{dp/(d+2p)}.
\end{align*}
\noindent Hence, taking $p$ sufficiently large we obtain that $\kernel\ast u_n$ is bounded in $L^\infty(D_T)$.  Therefore, 
\begin{align*}
\int_{\Real^d\setminus B_k}\hspace{-12pt} u_n\kernel\ast u_n dx&\leq \left\|\kernel\ast u_n\right\|_\infty \int_{\Real^d\setminus B_k} u_n dx.
\end{align*}
\noindent 
This fact along with \eqref{NoMassEscape} gives that for any $\epsilon>0$ there exists a $k_\epsilon$ sufficiently large such that for all $k>k_\epsilon$ 
\begin{align*}
\int_{\Real^d\setminus B_k}\hspace{-12pt} \tilde{u}_n\kernel\ast \tilde{u}_ndx\leq \epsilon.
\end{align*}
\noindent This gives convergence of the interaction energy.  The result follows from the weak lower semi-continuity of the entropy production functional and $\int \Phi(u) dx$ in $L^2$.  
\end{proof}
\section{Continuation Theorem} \label{sec:ContThm}

Continuation of weak solutions, Theorem \ref{thm:Continuation}, is a straightforward consequence of the local existence theory and the following lemma, which follows substantially the recent work in \cite{Blanchet09,Kowalczyk05,BlanchetEJDE06}. This lemma provides a more precise version of Lemma \ref{LinftyG} and has a similar proof. 
\begin{lemma} \label{lem:UnifInt}
Let $\set{u^\epsilon}_{\epsilon > 0}$ be the classical solutions to \eqref{RAE} on $D_T$, with non-negative initial data $\mollify{u_0}$. 
Suppose there exists $T_0$, $0 < T_0 \leq \infty$, such that
\begin{equation}
\sup_{\epsilon > 0} \lim_{k \rightarrow \infty}\sup_{t \in (0,T_0)}\norm{(u^\epsilon-k)_+}_{\frac{2-m}{2-m^\star}} = 0, \label{cond:unif_int}
\end{equation}
where $m$ is such that $1 \leq m \leq m^\star$ and $\lim \inf_{z \rightarrow \infty}A^{\prime}(z)z^{1-m} > 0$.  Then there exists $C = C(M,\norm{u_0}_{\infty})$ such that for all $\epsilon > 0$,
\begin{equation*}
\sup_{t \in (0,T_0)}\norm{u^\epsilon(t)}_\infty \leq C. 
\end{equation*}
In particular, if $T_0 = \infty$, then $\set{u^\epsilon}_{\epsilon > 0}$ are uniformly bounded for all time, and therefore the weak solution $u(t)$, is uniformly bounded for all time.   
\end{lemma}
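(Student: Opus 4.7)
The plan is to run an Alikakos iteration as in Lemma \ref{LinftyG}, sharpened so that the small quantity $\|(u^\epsilon-k)_+\|_q$ with $q=(2-m)/(2-m^\star)$ from the hypothesis is used to close the estimate at exactly the scale where aggregation and degenerate diffusion balance. The exponent $q$ is the one fixed by the mass-invariant scaling $u\mapsto\lambda^d u(\lambda\,\cdot\,)$ when the effective growth rate of $\Phi$ near infinity is only $m$ rather than the a priori best $m^\star$.

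\noindent\textbf{Step 1.} Fix $k$ so large that $A^{\epsilon\prime}(z)\geq c\,z^{m-1}$ on $\{z>k\}$ uniformly in small $\epsilon$; such $k$ exists by $\liminf_{z\to\infty}A'(z)z^{1-m}>0$ together with the construction \eqref{ParabolicReg}. Differentiating $\|u_k^\epsilon\|_p^p$ in time with $u_k^\epsilon=(u^\epsilon-k)_+$, splitting $u^\epsilon=u_k^\epsilon+k$ on the support of the integrand, and integrating by parts as in Lemma \ref{LinftyG} yields
\[
\frac{d}{dt}\|u_k^\epsilon\|_p^p + c_p\int |\grad (u_k^\epsilon)^{(p+m-1)/2}|^2 dx \lesssim p\int (u_k^\epsilon)^p |\Delta \mathcal{J}_\epsilon\K\ast u^\epsilon|\,dx + pk\int (u_k^\epsilon)^{p-1}|\Delta \mathcal{J}_\epsilon\K\ast u^\epsilon|\,dx,
\]
where $A^{\epsilon\prime}(u^\epsilon)(u_k^\epsilon)^{p-2}|\grad u_k^\epsilon|^2\gtrsim c_p|\grad(u_k^\epsilon)^{(p+m-1)/2}|^2$ is used to bound the diffusion below.

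\noindent\textbf{Step 2.} Control the aggregation using H\"older combined with Lemma \ref{lem:CZ} (which gives $\|\Delta \mathcal{J}_\epsilon\K\ast u^\epsilon\|_r\lesssim r\|u^\epsilon\|_r$) and \eqref{UkvsU} for the $u^\epsilon\leftrightarrow u_k^\epsilon$ exchange, producing a main term of the shape $\|u_k^\epsilon\|_{p+\alpha}^{p+\alpha}$ plus controlled remainders. Interpolating this via the Gagliardo-Nirenberg-Sobolev inequality between $\|u_k^\epsilon\|_q$ and $\|\grad(u_k^\epsilon)^{(p+m-1)/2}\|_2$ and applying Young's inequality splits the right-hand side as $\|u_k^\epsilon\|_q^\beta\,\|\grad(u_k^\epsilon)^{(p+m-1)/2}\|_2^2$ plus a lower-order remainder in $\|u_k^\epsilon\|_p^p$, for some $\beta>0$, where $q=(2-m)/(2-m^\star)$ is precisely the exponent at which this interpolation is critical. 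By \eqref{cond:unif_int}, enlarge $k$ further so that $\|u_k^\epsilon\|_q^\beta$ is smaller than the coefficient of the diffusion, uniformly in $\epsilon$ and $t\in(0,T_0)$; absorbing the gradient term leaves
\[
\frac{d}{dt}\|u_k^\epsilon\|_p^p + \tilde c_p \int |\grad(u_k^\epsilon)^{(p+m-1)/2}|^2 dx \leq C(p,k,M)\bigl(\|u_k^\epsilon\|_p^p+1\bigr).
\]

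\noindent\textbf{Step 3 and main obstacle.} Coupling this inequality with the Sobolev embedding $\|(u_k^\epsilon)^{(p+m-1)/2}\|_{2d/(d-2)}^2\lesssim \|\grad(u_k^\epsilon)^{(p+m-1)/2}\|_2^2$ yields a geometric gain in integrability of the form $p_{n+1}=d(p_n+m-1)/(d-2)$. Running the standard Alikakos iteration along this sequence and tracking constants carefully (cf.\ \cite{Alikakos,Kowalczyk05,Blanchet09}) produces $\sup_{t\in(0,T_0)}\|u_k^\epsilon(t)\|_\infty\leq C(M,\|u_0\|_\infty)$ uniformly in $\epsilon$, and \eqref{UkvsU} transfers the bound to $u^\epsilon$ itself; passing to the weak limit delivers the bound on $u$. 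The main obstacle is Step 2: verifying that the single exponent $q=(2-m)/(2-m^\star)$ is exactly the one at which the H\"older/GNS/Young chain closes, so that the equi-integrability hypothesis \eqref{cond:unif_int} can be used as the absorption mechanism. This is dictated by the scaling heuristic recalled in the introduction (entropy vs.\ interaction energy under $u_\lambda(x)=\lambda^d u(\lambda x)$), but must be verified explicitly against the concrete exponents produced by Lemma \ref{lem:CZ} and the Sobolev embedding.
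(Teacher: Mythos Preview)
Your strategy---differentiate $\|u_k^\epsilon\|_p^p$, use the lower bound $A^{\epsilon\prime}(z)\gtrsim z^{m-1}$ on $\{z>k\}$, and close via Gagliardo--Nirenberg--Sobolev with the smallness of $\|u_k^\epsilon\|_q$ from \eqref{cond:unif_int}---is the same as the paper's. But the ``main obstacle'' you flag is a genuine gap, and your proposed resolution does not survive the case $m^\star<2-2/d$.

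With H\"older and Lemma \ref{lem:CZ} alone, the main aggregation term you produce is $\|u_k^\epsilon\|_{p+1}^{p+1}$ (this is the only scaling-consistent choice once you commit to $\|\Delta\K\ast u\|_r\lesssim\|u\|_r$). Running the GNS interpolation between $L^q$ and $\dot H^1$ of $(u_k^\epsilon)^{(p+m-1)/2}$, the gradient term appears with exponent exactly $2$---the only exponent at which absorption is possible---precisely when $q=d(2-m)/2$. That equals $(2-m)/(2-m^\star)$ \emph{only} when $m^\star=2-2/d$. For less singular kernels, $m^\star<2-2/d$, the hypothesis \eqref{cond:unif_int} controls a strictly smaller Lebesgue exponent $q=(2-m)/(2-m^\star)<d(2-m)/2$, the GNS exponent on the gradient becomes supercritical, and your absorption step fails. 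The paper handles this by splitting into cases: when $D^2\K\in L^{\gamma,\infty}_{loc}$ for some $\gamma>1$ (equivalently $m^\star<2-2/d$, Lemma \ref{lem:CZ_HighReg}), it replaces Calder\'on--Zygmund by the Hardy--Littlewood--Sobolev inequality \eqref{ineq:GGHLS}, obtaining instead $\|u_k\|_{\alpha p}^{p+1}$ with $1/\alpha=(2-1/\gamma)/(1+1/p)$. With that $\alpha$, the GNS exponent computation $\alpha_1(p+1)/2=1$ yields exactly $\overline q=(2-m)/(2-m^\star)$. So the exponent in \eqref{cond:unif_int} encodes the regularity of $\K$, and you must use that regularity on the aggregation side to close.

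A secondary difference: the paper does not run a full Alikakos iteration in Step 3. It stops after bounding $\|u^\epsilon\|_p$ uniformly on $(0,T_0)$ for a single $p$ large enough (any $p>d$ if $m^\star=2-2/d$, or $p\le\gamma/(\gamma-1)$ otherwise), uses Morrey's inequality to conclude $\|\nabla\K\ast u^\epsilon\|_\infty$ is bounded, and then invokes the black-box Lemma \ref{dynamic}. Your direct iteration $p_{n+1}=d(p_n+m-1)/(d-2)$ is viable in principle but requires tracking the $p$-dependence of all constants (including the factor $p$ from Lemma \ref{lem:CZ} and the GNS constants); the paper's route avoids this bookkeeping entirely.
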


\begin{proof}(\textbf{Lemma \ref{lem:UnifInt}})
\noindent
Let $\overline{q} = (2-m)/(2-m^\star) \geq 1$. It will be convenient to define $\gamma$, $1 \leq \gamma \leq d/2$ such that $m^\star = 1 + 1/\gamma - 2/d$. 
We first bound intermediate $L^p$ norms over the same interval, $(0,T_0)$.  Then we use Morrey's inequality and Lemma \ref{dynamic} to finish the proof. \\
\\
\textit{Step 1:}\\
\\
\noindent We have two cases to consider, $m^{\star} = 2 - 2/d$ and $m^\star < 2- 2/d$, which occurs if $D^2K \in \wkspace{\gamma}_{loc}$ for $\gamma > 1$ (Lemma \ref{lem:CZ_HighReg}).  In the former we show that for any $p \in (\overline{q}, \infty)$ we have $u^\epsilon(t)$ uniformly bounded in $L^{\infty}\left(0,T_0;L^p \right)$. 
In the latter case we only show that for $\overline{q} < p \leq \gamma/(\gamma-1)$ we 
have $u^\epsilon(t)$ uniformly bounded in $L^{\infty}\left( 0,T_0;L^p \right)$. In either case, this is sufficient to apply Lemma \ref{dynamic} and conclude the proof. \\    

\noindent
Let $k > 0$ be some constant to be determined later and let $u_k = (u - k)_+$.  We have dropped
the $\epsilon$ and time dependence for notational convenience.
By conservation of mass and \eqref{UkvsU}, it suffices to control $\norm{u_k}_p$ for any $k > 0$. Thus, using the parabolic regularization, \eqref{ParabolicReg}, and \eqref{UkvsU} we obtain
\begin{equation*}
\frac{d}{dt}\norm{u_k}_p^p \leq -p(p-1)\int u_k^{p-2} A^\prime(u)\abs{\grad u}^2dx + p(p-1)\int (u_k^{p-1} + k u_k^{p-2})\grad u \cdot \mollify \grad \conv{\K}{u} dx.
\end{equation*}
Then,
\begin{align}
\frac{d}{dt}\norm{u_k}_p^p & \leq -4(p-1)\int A^\prime(u)\abs{\grad u_k^{p/2}}^2dx - \int ((p-1) u_k^{p} + kp u_k^{p-1}) \mollify \Delta \conv{\K}{u} dx. \label{ineq:MainEstimate} 
\end{align}
 
\noindent Since the constants are not relevant, we treat the cases together only noting minor differences when they appear. 
If $m = 2-2/d$ we may use H\"older's inequality and then Lemma \ref{lem:CZ} to obtain a bound on the first term from the advection:
\begin{equation*}
\abs{\int u_k^p \mollify \conv{\Delta \K}{u} dx} \lesssim_{p,\K} \norm{u_k}_{p+1}^p\norm{u}_{p+1}. 
\end{equation*}
On the other hand, if $\gamma > 1$ we have from the generalized Hardy-Littlewood-Sobolev inequality \eqref{ineq:GGHLS} (Lemma \eqref{lem:GHLS}),
\begin{equation*}
\abs{\int u_k^p \mollify \conv{\Delta \K}{u} dx} \lesssim_{p,\K} \norm{u_k}_{\alpha p}^p\norm{u}_t + C(M)\norm{u_k}^p_p,
\end{equation*}
with the scaling condition $1/\alpha + 1/t + 1/\gamma = 2$. 
Choosing $t = \alpha p$ implies that
\begin{equation}
\frac{1}{\alpha} = \frac{2 - 1/\gamma}{1 + 1/p}. \label{eq:alpha}
\end{equation}
Notice that from our choice of $p$ then $1 \leq 1/p + 1/\gamma$; thus, $1/\alpha \leq 1$.  Note that in the case when $m = 2 - 2/d$ then $t = \alpha p = p+1$. 
Thus we estimate the advection terms, 
\begin{align}
\abs{\int u_k^p \mollify \conv{\Delta \K}{u} dx} & \lesssim_{p,\K} \norm{u_k}_{\alpha p}^{p}\norm{u}_{\alpha p} + C(M)\norm{u_k}_p^p \nonumber \\
& \lesssim \norm{u_k}_{\alpha p}^{p+1} + \norm{u}_{\alpha p}^{p+1} + C(M)\norm{u_k}_p^p \nonumber \\
\ineqtext{\eqref{UkvsU}} & \lesssim \norm{u_k}_{\alpha p}^{p+1} + C(M)\norm{u_k}_p^p + C(k,M). \label{ineq:adv_1} 
\end{align}
The lower order terms in the advection can be controlled using H\"older's inequality and Lemma \ref{lem:CZ}, 
\begin{align}
\abs{\int u_k^{p-1} \mollify \conv{\Delta \K}{u} dx} & \lesssim_p \norm{u_k}_p^{p-1}\norm{u}_p \nonumber \\
 &\leq \norm{u_k}_{p}^{p} + \norm{u}_p^p \nonumber \\
\ineqtext{\eqref{UkvsU}} & \lesssim \norm{u_k}_p^p + C(k,M). \label{ineq:adv_2}
\end{align}
We now aim to compare the dissipation term in \eqref{ineq:MainEstimate} with the estimates \eqref{ineq:adv_1} and \eqref{ineq:adv_2}. 
We use the Gagliardo-Nirenberg-Sobolev inequality (Lemma \ref{lem:GNS}),
\begin{equation}
\norm{u_k}_{\alpha p} \lesssim \norm{u_k}^{\alpha_2}_{\overline{q}}\norm{u_k^{(p+m-1)/2}}_{W^{1,2}}^{\alpha_1} 
\end{equation}
with 
\begin{equation*}
\alpha_1 = \frac{2d}{p}\left(\frac{(p - \overline{q}/\alpha)}{ \overline{q}(2 - d) + dp + d(m-1)}\right),
\end{equation*}
and
\begin{equation*}
\alpha_2 = 1 - \alpha_1(p+m -1)/2 > 0.
\end{equation*}
By the definition of $\overline{q}$ and (\ref{eq:alpha}) we have that, 
\begin{equation}
\alpha_1(p+1)/2 = 1, \label{ineq:alpha1}
\end{equation}
which implies,
\begin{equation}
\norm{u_k}^{p+1}_{\alpha p} \lesssim \norm{u_k}^{\alpha_2(p+1)}_{\overline{q}}\left(\int u_k^{m-1}\abs{\grad u_k^{p/2}}^2 dx + \int u_k^{p+m-1} dx \right). \label{ineq:GNSCont2}
\end{equation}
If $d = 2$ then necessarily $m = m^\star = 1$ and this inequality will be sufficient. However, for $d \geq 3$, more work must be done. Define, 
\begin{equation*}
I = \int u_k^{m-1} \abs{\grad u_k^{p/2}}^2 dx. 
\end{equation*}
Then, for $\beta_1 \leq \alpha_1$ and $(p+m-1)\beta_1/2 < 1$, 
\begin{equation*}
\beta_1 = \frac{2d(1 - \overline{q}/(p+m-1))}{ \overline{q}(2 - d) + dp + d(m-1)},
\end{equation*}
and $\beta_2 = 1 - \beta_1(p+m-1)/2 > 0$, we have the following by Lemma \ref{lem:GNS}, 
\begin{align*}
\int u_k^{p+m-1} dx &\lesssim \norm{u_k}_{\overline{q}}^{(p+m-1)\beta_2}\left(I + \int u_k^{p+m-1} dx \right)^{(p+m-1)\beta_1/2} \\
& \lesssim \norm{u_k}_{\overline{q}}^{(p+m-1)\beta_2}\left(I^{(p+m-1)\beta_1/2}  + \left(\int u_k^{p+m-1} dx\right)^{(p+m-1)\beta_1/2}\right). 
\end{align*}
Therefore, by weighted Young's inequality for products,
\begin{align}
\int u_k^{p+m-1} dx \lesssim \norm{u_k}_{\overline{q}}^{(p+m-1)\beta_2}\left(1 + I \right) + \norm{u_k}_{\overline{q}}^{\gamma_0}, \label{ineq:GNS_inhomg}
\end{align}
for some $\gamma_0 > 0$, the exact value of which is not relevant. Putting \eqref{ineq:GNSCont2} and \eqref{ineq:GNS_inhomg} together implies, 
\begin{equation}
\norm{u_k}_{\alpha p}^{p+1} \lesssim \mathcal{P}(\norm{u_k}_{\overline{q}})I + C(\norm{u_k}_{\overline{q}}), \label{ineq:GNS_full}
\end{equation}
where $\mathcal{P}(z)$ denotes a polynomial such that $\mathcal{P}(z) \rightarrow 0$ as $z \rightarrow 0$.
By definition of $m$, $\exists\; \delta > 0$ such that for $k$ sufficiently large then $u > k$ implies $A^\prime(u) > \delta u^{m-1}$.Therefore, combining \eqref{ineq:MainEstimate} with \eqref{ineq:GNS_full},\eqref{ineq:adv_1} and \eqref{ineq:adv_2} implies, 
\begin{align*}
\frac{d}{dt}\norm{u_k}_p^p & \leq -C(p)\delta\int u_k^{m-1} \abs{\grad u_k^{p/2}}^2 dx + C(p)\norm{u_k}_{\alpha p}^{p+1}  \nonumber \\
& + C(M,p)\norm{u_k}_p^p + C(k,M,p) \nonumber \\
& \leq -\frac{C(p)\delta}{\mathcal{P}(\norm{u_k}_{\overline{q}})}\norm{u_k}_{\alpha p}^{p+1} + C(p)\norm{u_k}_{\alpha p}^{p+1} \nonumber \\
&  + C(M,p)\norm{u_k}_p^p + C(k,M,p,\norm{u_k}_{\overline{q}}).  
\end{align*}
By interpolation against $L^1$, conservation of mass and $\alpha \geq 1$ we have 
\begin{equation*}
\norm{u_k}_p^p \lesssim_M 1 + \norm{u_k}_{p \alpha}^{p+1}. 
\end{equation*}
Therefore, by assumption \eqref{cond:unif_int} we may choose $k$ sufficiently large such that there exists some $\eta > 0$ which satisfies the following for all $t \in (0,T_0)$, 
\begin{equation*}
\frac{d}{dt}\norm{u_k}_p^p \leq -\eta\norm{u_k}_p^p + C(k,M,p,\norm{u_k}_{\overline{q}}). 
\end{equation*} 
It follows that $\norm{u_k}_p$ is bounded uniformly on $(0,T_0)$. \\
\\
\textit{Step 2:} \\
\\
\noindent
The control of these $L^p$ norms will enable us to invoke Lemma \ref{dynamic} and conclude $u^\epsilon(t)$ is bounded uniformly in $L^\infty(D_{T_0})$. 
Since $\grad \K \in L_{loc}^1$ and $\grad \K \mathbf{1}_{\Real^d \setminus B_1(0)} \in L^q$ for all $q > d/(d-1)$ (by Lemma \ref{lem:Newt}), we have for any $q > d/(d-1)$
\begin{equation*}
\norm{\vec{v}}_{q} = \norm{\conv{\grad \K}{u}}_{q} \leq \norm{\grad \K \mathbf{1}_{B_1(0)}}_1\norm{u}_q + \norm{\grad \K \mathbf{1}_{\Real^d\setminus B_1(0)}}_qM.
\end{equation*}
If $\gamma > 1$, then we may choose $q \in (d/(d-1),\gamma/(\gamma - 1)]$, since in this case necessarily $d \geq 3$. Otherwise we may choose $q > d/(d-1)$ arbitrarily. 
Then, step one implies $\vec{v} \in L^\infty((0,T_0);L^q)$. 
If $\gamma > 1$ then, noting that Definition \ref{def:admK} implies $D^2\K \mathbf{1}_{\Real^d\setminus B_1(0)} \in L^q$ for all $q > 1$, 
\begin{equation*}
\norm{\grad \vec{v}}_{d+1} = \norm{\conv{D^2\K}{u}}_{d+1} \leq \wknorm{D^2\K \mathbf{1}_{B_1(0)}}{\gamma} \norm{u}_{p} + \norm{\grad \K \mathbf{1}_{\Real^d\setminus B_1(0)}}_{d+1}M,
\end{equation*}
for $p = \gamma(d+1)/(d(\gamma - 1) + 2\gamma - 1)$. Note that
\begin{equation*}
1 < p = \frac{\gamma(d+1)}{d(\gamma - 1) + 2\gamma - 1} \leq \frac{\gamma}{\gamma - 1}.
\end{equation*}
On the other hand, if $m^\star = 2 - 2/d$ then the above proof shows that $u^\epsilon(t)$ is bounded uniformly in $L^\infty((0,T_0);L^p)$ for all $p < \infty$. Therefore, by Lemma \ref{lem:CZ} we have $\norm{\grad \vec{v}}_p \lesssim \norm{u}_p \lesssim 1$, for all $1 < p < \infty$. 
In either case, this is sufficient to apply  Morrey's inequality and conclude that $\norm{\vec{v}}_\infty$ is uniformly bounded on $(0,T_0)$. By Lemma \ref{dynamic} we then have that $u^\epsilon$ is uniformly bounded in $L^\infty(D_{T_0})$ and we have proved the lemma. As in Lemma \ref{LinftyG}, the uniform bounds depend on the domain but not it's diameter. 
\end{proof}

\begin{remark} \label{rmk:subcritical}
The proof of this lemma directly implies global well-posedness
in the subcritical case since \eqref{cond:unif_int} is only necessary in the critical and supercritical cases. 
 Moreover, in the critical case, one may prove directly that there exists some $M_0$ such that if $M < M_0$ the solution is global.  However, $M_0$ will
generally depend on the constant of the Gagliardo-Nirenberg-Sobolev inequality, as in \cite{SugiyamaADE07,SugiyamaDIE07,JagerLuckhaus92}. As discussed in the recent works of \cite{Blanchet09,BlanchetEJDE06}, the use of a continuation theorem will allow for a more accurate estimate of the critical mass through the use
of the free energy. 
\end{remark}

\noindent

\begin{proof}({\bf Theorem \ref{thm:Continuation}})
Suppose, for contradiction, that the weak solution cannot be continued past $T_\star < \infty$ and \eqref{cond:equint} fails. 
As the regularized problems are bounded, this implies the hypotheses of Lemma \ref{lem:UnifInt} are satisfied on $(0,T_\star)$, and therefore $\sup_{\epsilon > 0} \sup_{t \in (0,T_\star)}\norm{u^\epsilon(t)}_{p} \leq \eta$ as $t \nearrow T_\star$ for some $p > \overline{q}$ and $\eta > 0$. 
By the proof of Lemma \ref{LinftyG}, for any $\eta > 0$ there exists a $\tau = \tau(\eta,M) > 0$ such that 
if $\norm{u_0}_{p} < \eta$ then $\norm{u^\epsilon}_{p} \leq C$ for all $\epsilon > 0$. 
Therefore, we may choose some $t_n < T_\star$ such that $\tau$ satisfies $t_n + \tau > T_\star$ and, by Theorems \ref{LEADD} and \ref{LEADDRd}, we construct a solution 
$\tilde{u}(x,t)$ on the time interval $[t_n,t_n + \tau)$. By uniqueness, $\tilde{u}(x,t) = u(x,t)$ a.e. for $t \in [t_n,T_\star)$; hence, it is a genuine extension of the original solution $u(x,t)$.  However, it exists on a longer time interval which is a contradiction. 
\end{proof}

\section{Global Existence} \label{sec:GE}
We now prove Theorem \ref{thm:GWP}. 
We first note that the entropy is bounded below uniformly in time, which is a consequence of assumption \textbf{(D3)} of Definition \ref{def:admDiff}.
\begin{lemma} \label{lem:entropy_lowerbd}
Let $u(x,t)$ be a weak solution to \eqref{AE}. Then,
\begin{equation*}
\int \Phi(u(t)) dx \geq -CM. 
\end{equation*}
\end{lemma}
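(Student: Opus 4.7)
The plan is to show that under assumption \textbf{(D3)}, the entropy density $\Phi$ admits a pointwise lower bound of the form $\Phi(z) \ge -Cz$ for all $z \ge 0$, after which the claimed estimate follows by integrating and using mass conservation. The content of the lemma is really a pointwise fact about $\Phi$; the PDE plays no role beyond supplying $\int u\,dx = M$.

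First I would unpack the definition \eqref{def:G}. Integrating $\Phi''(z) = A'(z)/z$ against the normalization $\Phi'(1)=0$ gives $\Phi'(z) = \int_1^z A'(s)/s\,ds$, so for $z \in (0,1]$ we have $\Phi'(z) = -\int_z^1 A'(s)/s\,ds$. By \textbf{(D3)} the improper integral $\int_0^1 A'(s)/s\,ds$ converges, so the one-sided limit
\begin{equation*}
\Phi'(0^+) := -\int_0^1 \frac{A'(s)}{s}\,ds
\end{equation*}
exists and is finite (and nonpositive, since $A$ is increasing). Because $\Phi$ is convex, $\Phi'$ is monotone nondecreasing on $(0,\infty)$, and $\Phi'(1)=0$ tells us that $\Phi'\le 0$ on $(0,1]$ and $\Phi'\ge 0$ on $[1,\infty)$; in particular $\Phi$ attains its minimum at $z=1$.

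Next I would promote this to a linear-in-$z$ lower bound. On $[0,1]$, using $\Phi(0)=0$ together with $\Phi'(s) \ge \Phi'(0^+)$,
\begin{equation*}
\Phi(z) \;=\; \int_0^z \Phi'(s)\,ds \;\ge\; \Phi'(0^+)\,z \;\ge\; -|\Phi'(0^+)|\,z.
\end{equation*}
On $[1,\infty)$, convexity and the minimum at $1$ give $\Phi(z) \ge \Phi(1) \ge -|\Phi(1)|$, and since $z \ge 1$ on this set we may rewrite this as $\Phi(z) \ge -|\Phi(1)|\,z$. Setting $C := \max(|\Phi'(0^+)|,|\Phi(1)|)$ we obtain
\begin{equation*}
\Phi(z) \ge -Cz \quad \text{for all } z \ge 0,
\end{equation*}
with $C$ depending only on $A$.

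Finally, integrating this pointwise inequality against the nonnegative weak solution and invoking conservation (or the $L^\infty(0,T;L^1)$ bound from Theorems \ref{LEADD} and \ref{LEADDRd}),
\begin{equation*}
\int \Phi(u(t))\,dx \;\ge\; -C\int u(t)\,dx \;=\; -CM,
\end{equation*}
which is the claim. I do not foresee any genuine obstacle here; the only subtle point is recognizing that \textbf{(D3)} is precisely what guarantees $\Phi'(0^+)$ is finite, which in turn is what prevents $\Phi$ from developing a $-\infty$ singularity at $u=0$ that could not be absorbed against the finite mass.
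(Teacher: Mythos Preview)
Your proof is correct and follows essentially the same route as the paper: both observe that \textbf{(D3)} makes $\Phi'(z)=\int_1^z A'(s)/s\,ds$ bounded below on $(0,1]$, then split at $z=1$ to obtain a pointwise bound $\Phi(z)\ge -Cz$ (the paper writes this splitting at the level of the integral and finishes via Chebyshev on $\{u\ge 1\}$, whereas you convert the constant bound to a linear one using $z\ge 1$). The argument and the key role of \textbf{(D3)} are identical.
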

\begin{proof}
Let $h(z) = \int_1^z A^\prime(s)s^{-1} ds$. By Definition \ref{def:admDiff}, \textbf{(D3)}, for $z \leq 1$,
\begin{equation*}
h(z) \geq -C > -\infty. 
\end{equation*}
Therefore,
\begin{align*}
\int \Phi(u) dx =\int \int_0^u h(z) dz dx & \geq \int \mathbf{1}_{\set{u \leq 1}} \int_0^u h(z) dz + \mathbf{1}_{\set{u \geq 1}} \int_0^1 h(z) dz dx. \\
& \geq -\int \mathbf{1}_{\set{u \leq 1}}Cu - \mathbf{1}_{\set{u \geq 1}}C dx \\ 
& \geq -2C\norm{u}_1. 
\end{align*}
where the last line followed from Chebyshev's inequality. 
\end{proof}

\subsection{Theorem \ref{thm:GWP}: $m^\star > 1$}
\begin{proof}({\bf Theorem \ref{thm:GWP}})
We only prove the second assertion
under the hypotheses of Proposition \ref{prop:critical_mass}, as the subcritical case follows similarly. 
By the energy dissipation inequality \eqref{EnrDiss} we have for all time $0 \leq t < T_\star$, 
\begin{equation}
S(u(t)) - \mathcal{W}(u(t)) \leq \F(u_0) := F_0. \label{eq:endiss_gwp}
\end{equation}
We drop the time dependence of $u(t)$ for notational simplicity.
By the assumption on $\K$, $\forall \, \epsilon > 0$, $\exists \, \delta>0$ such that $\abs{\K(x)} \leq (c+\epsilon)\abs{x}^{-d/p}$ for $\abs{x} < \delta$. 
By Lemma \ref{lem:GHLS} we have,
\begin{equation*}
\int \Phi(u) dx - \frac{1}{2}C_{m^\star} M^{2-m^\star}(c+\epsilon)\norm{u}_{m^\star}^{m^\star} \leq F_0 + \frac{1}{2}\norm{\K|_{B_\delta(0)}}_\infty M^2, 
\end{equation*}
By \eqref{cond:critical_mass_powerlaw} and $M < M_c$, there exists $\epsilon > 0$ small enough and $\alpha,k > 0$ such that 
\begin{equation}
\Phi(z)z^{-m^\star} - \frac{1}{2}C_{m^\star} M^{2-m^\star}\left(c + \epsilon \right) \geq \alpha > 0, \textup{ for all } z > k. \label{ineq:asymp_gwp}
\end{equation}
By Lemma \ref{lem:entropy_lowerbd} we have,
\begin{equation*}
\int_{\set{u > k}}\hspace{-.5cm} u^{m^\star}\left(\Phi(u)u^{m^\star} - \frac{1}{2}C_{m^\star} M^{2-m^\star}\left(c + \epsilon \right) \right) dx -\frac{1}{2}\int_{\set{u < k}}\hspace{-.5cm}C_{m^\star} M^{2-m^\star}\left(c + \epsilon \right)u^{m^\star}dx  \leq F_0 + C(\delta,M), 
\end{equation*}
and by \eqref{ineq:asymp_gwp},
\begin{equation*}
\alpha \int_{\set{u > k}} \hspace{-.5cm}u^{m^\star}dx - \frac{1}{2}C_{m^\star} M^{2-m^\star}\left(c + \epsilon\right)\int_{\set{u < k}}\hspace{-.5cm}u^{m^\star}dx  \leq F_0 + C(M,\delta).
\end{equation*}
By mass conservation we have that $\norm{u}_{m^\star}$ is a priori bounded independent of time and Theorem \ref{thm:Continuation} and Lemma \ref{lem:UnifInt} implies global existence and uniform boundedness.
\end{proof}

\subsection{Theorem \ref{thm:GWP_2D}: $m^\star = 1$}
\noindent
The proof of Theorem \ref{thm:GWP_2D} follows similarly, but requires the logarithmic Hardy-Littlewood-Sobolev inequality (Lemma \ref{lem:log_sob}) as opposed to Lemma \ref{lem:GHLS}.  

\begin{proof}({\bf Theorem \ref{thm:GWP_2D}})
\noindent

\noindent
We only prove the second assertion
under the hypotheses of Proposition \ref{prop:crit_mass_2D}, as the subcritical case follows similarly. 
We will again use Theorem \ref{thm:Continuation} and prove
\begin{equation*}
\sup_{t \in (0,\infty)}\int (u \ln u)_+ dx < \infty.
\end{equation*}  

\noindent
By the energy dissipation inequality \eqref{EnrDiss} we again have \eqref{eq:endiss_gwp}.
By the assumptions of Proposition \ref{prop:crit_mass_2D}, 
for all $\epsilon > 0$ there exists $\delta > 0$ such that, 
\begin{equation*}
\int \Phi(u) dx + (c+\epsilon)\frac{1}{2}\int\int_{\abs{x - y} < \delta}\hspace{-.5cm} u(x)u(y)\ln\abs{x-y} dx dy \leq C(F_0,\delta,M).  
\end{equation*}
By $D$ bounded, the logarithmic Hardy-Littlewood-Sobolev inequality \eqref{ineq:log_sob} implies,  
\begin{equation*}
\int \Phi(u) dx - (c+\epsilon)\frac{M}{2d}\int u \ln u dx \leq C(F_0,\delta,M,\textup{diam} D).
\end{equation*}
Choosing $k > 0$ large and recalling Lemma \ref{lem:entropy_lowerbd} implies 
\begin{equation*}
\int_{\set{u > k}}\hspace{-.5cm} u \ln u \left(\frac{\Phi(u)}{u \ln u} - (c+\epsilon)\frac{M}{2d}\right) dx - (c+\epsilon)\int_{\set{u < k}}u \ln u dx \leq C(F_0,\delta,M,\textup{diam} D).
\end{equation*}
As in the proof of Theorem \ref{thm:GWP}, by conservation of mass, \eqref{cond:critical_mass_2D} and $M < M_c$, we may choose $\epsilon > 0$ small enough and $k$ large enough such that 
\begin{equation*}
\int_{\set{u > k}}\hspace{-.5cm} u \ln u dx \leq C(F_0,M,\textup{diam} D).  
\end{equation*}
\end{proof}

\section{Finite Time Blow Up} \label{sec:FTBU}
In this section we prove Theorem \ref{thm:SupercritDiff} and Theorem \ref{thm:SupercritMass}. We prove 
Theorem \ref{thm:SupercritDiff} as it is somewhat easier, though the technique is the same as that used to prove 
Theorem \ref{thm:SupercritMass}. 

\subsection{Supercritical Case: Theorem \ref{thm:SupercritDiff}}
For Theorem \ref{thm:SupercritDiff} we state the following lemma, which provides insight into the nature of the supercritical cases. 
The proof and motivation follows \cite{Blanchet09}. 

\begin{lemma} \label{lem:infF_Supercritical}
Define $\mathcal{Y_M} = \set{u \in L^1 \cap L^{m^\star}: u\geq 0, \norm{u}_1 = M}$. 
Suppose $\K$ satisfies \textbf{(B1)} and $A(u)$ satisfies \textbf{(B3)} for some $m > 1, \overline{A} > 0$. 
Suppose further that the problem is supercritical, that is, $m < m^\star$. Then $\inf_{\mathcal{Y_M}} \F = -\infty$. 
Moreover, there exists an infimizing sequence with vanishing second moments which converges to the Dirac delta mass in the sense of measures.  
\end{lemma}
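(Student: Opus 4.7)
The plan is to construct an explicit infimizing sequence via the mass-invariant rescaling discussed in the introduction. Fix a smooth, compactly supported $u_0 \in \mathcal{Y}_M$ with support in some ball $B_R(0)$, and set $u_\lambda(x) = \lambda^d u_0(\lambda x)$ for $\lambda \geq 1$. Mass is preserved, so $u_\lambda \in \mathcal{Y}_M$. A direct change of variables gives
\begin{equation*}
\int \abs{x}^2 u_\lambda(x)\, dx = \lambda^{-2}\int \abs{y}^2 u_0(y)\, dy \to 0,
\end{equation*}
and $\int u_\lambda \varphi\, dx \to M \varphi(0)$ for any $\varphi \in C_b(\Real^d)$, so $u_\lambda \rightharpoonup M \delta_0$ as $\lambda \to \infty$. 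It therefore suffices to show $\F(u_\lambda) \to -\infty$.

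Next I would compute the leading-order scaling of the two pieces of $\F$. From \textbf{(B3)} together with $\Phi''(z) = A'(z)/z$, $\Phi'(1) = 0$, $\Phi(0) = 0$, two integrations yield $\Phi(z) = \frac{\overline{A}}{m-1}z^m + o(z^m)$ as $z \to \infty$. A rescaling $y = \lambda x$ then gives
\begin{equation*}
S(u_\lambda) = \lambda^{-d}\int \Phi(\lambda^d u_0(y))\, dy = \lambda^{d(m-1)}\left( \frac{\overline{A}}{m-1}\int u_0^m\, dy + o(1) \right)
\end{equation*}
as $\lambda \to \infty$. Similarly, the change of variables $x' = \lambda x$, $y' = \lambda y$ in the interaction energy yields
\begin{equation*}
\mathcal{W}(u_\lambda) = \frac{1}{2}\int\int u_0(x')\K\!\left(\frac{x'-y'}{\lambda}\right)u_0(y')\, dx'\, dy',
\end{equation*}
and since $u_0$ has compact support, $\abs{x' - y'}/\lambda \leq 2R/\lambda$ uniformly in $x', y' \in \mathrm{supp}\, u_0$. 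Applying \textbf{(B1)} gives
\begin{equation*}
\mathcal{W}(u_\lambda) = \lambda^{d/p}\left( \frac{c}{2}\int\int u_0(x')\abs{x'-y'}^{-d/p}u_0(y')\, dx'\, dy' + o(1) \right),
\end{equation*}
where the principal integral is finite by Lemma \ref{lem:GHLS} applied to $u_0 \in L^1 \cap L^{m^\star}$. Since the supercriticality hypothesis $m < m^\star = (p+1)/p$ is equivalent to $d(m-1) < d/p$, the aggregation scales faster than the entropy and $\F(u_\lambda) = S(u_\lambda) - \mathcal{W}(u_\lambda) \to -\infty$.

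The main obstacle is the rigorous control of the error terms in the two asymptotic expansions, since the $o(\cdot)$ symbols are pointwise in their arguments rather than uniform in $\lambda$. For $\mathcal{W}$ the issue is mild: compact support of $u_0$ makes the relative error in \textbf{(B1)} arbitrarily small by taking $\lambda$ large. For $S$, given $\epsilon > 0$ there exists $Z$ with $\abs{\Phi(z) - \tfrac{\overline{A}}{m-1}z^m} \leq \epsilon z^m$ for $z \geq Z$. I would then split $\int \Phi(\lambda^d u_0(y))\, dy$ according to whether $\lambda^d u_0(y) \geq Z$ or not: on the first set the relative error is at most $\epsilon$, while on the complement $\Phi$ is bounded by $\sup_{[0,Z]}\Phi$ and the contribution is at most $\sup_{[0,Z]}\Phi \cdot \abs{\mathrm{supp}\, u_0}$, which is an $O(1)$ correction negligible against $\lambda^{d(m-1)}$. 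Sending $\epsilon \to 0$ after $\lambda \to \infty$ gives the claimed asymptotic and completes the construction of the infimizing sequence.
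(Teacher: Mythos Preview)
Your proof is correct and follows essentially the same rescaling argument as the paper: take a fixed compactly supported profile, apply the mass-invariant scaling $u_\lambda(x)=\lambda^d u_0(\lambda x)$, use \textbf{(B3)} and \textbf{(B1)} to extract the leading-order behavior of the entropy and interaction energy, and compare the exponents $d(m-1)<d/p$. The only difference is that the paper chooses its base profile to be a near-extremizer $h^\star$ of the Hardy--Littlewood--Sobolev inequality \eqref{ineq:GHLS}; this is unnecessary in the supercritical case (any nonnegative $C_c^\infty$ profile with mass $M$ gives a strictly positive interaction coefficient, which is all you need), but the paper sets things up this way because the same framework is reused in the critical-mass Lemma~\ref{lem:infF_criticalMass}, where the two scaling exponents coincide and the precise constant in front matters.
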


\begin{proof}
Let $0 < \theta < 1$, $\alpha = d/p$. Then by Lemma \ref{lem:GHLS} there exists $h^\star$ such that,
\begin{equation}
\theta C_{m^\star} \leq \frac{\abs{\int\int h^\star(x)h^\star(y)\abs{x-y}^{-\alpha}dx dy}}{ \norm{h^\star}^{2-m^\star}_1\norm{h^\star}^{m^\star}_{m^\star} } \leq C_{m^\star}. \label{ineq:hstar_supercrit} 
\end{equation}
We may assume without loss of generality that $h^\star \geq 0$, since replacing $h^\star$ by $\abs{h^\star}$ will only increase the value of the convolution. By density, we may take $h^\star \in C_c^\infty$ and therefore with a finite second moment. \\

\noindent
Let $\mu = \norm{h^\star}_1^{1/d}M^{-1/d}$, $\lambda > 0$ and $h_\lambda(x) = \lambda^dh^\star(\lambda \mu x)$. First note, by \textbf{(B3)}, $\forall \epsilon > 0$, $\exists \, R > 0$ such that,
\begin{align}
\int\Phi(h_\lambda)dx & = \int \int_0^{h_\lambda}\int_1^s \frac{A^\prime(z)}{z} dz ds dx \nonumber \\
& \leq \int \int_0^{h_\lambda} \int_R^{\max(s,R)} (m\overline{A} + \epsilon)z^{m-2} dz + \int_1^R\frac{A^\prime(z)}{z} dz ds dx \nonumber \\
& \leq \frac{\overline{A} + \epsilon}{m-1}\norm{h_\lambda}_{m}^m + C(R)\norm{h_\lambda}_1. \label{ineq:hlambda_phi} 
\end{align}
By \textbf{(B1)} and $h^\star \in C_c^\infty$, $\forall \, \epsilon > 0, \, \exists \lambda > 0$ sufficiently large such that, 
\begin{equation}
-\mathcal{W}(t) \leq -(c-\epsilon)\frac{\mu^{-2d + \alpha}\lambda^{\alpha}}{2}\int\int h^\star(x)h^\star(y) \abs{x - y}^{-\alpha} dxdy. \label{ineq:hlambda_W}
\end{equation}
Combining \eqref{ineq:hlambda_W},\eqref{ineq:hlambda_phi} with \eqref{ineq:hstar_supercrit} and Lemma \ref{lem:GHLS}, we have for $\lambda,R$ sufficiently large, 

\begin{align*}
\F(h_\lambda) & \leq \frac{\lambda^{dm - d}M}{(m-1)\norm{h^\star}_1}(\overline{A} + \epsilon)\norm{h^\star}_m^m - \lambda^{\alpha}(\theta-\epsilon)\frac{C_{m^\star}}{2}\left(\frac{\norm{h^\star}_1}{M}\right)^{-2 + \alpha/d}\norm{h^\star}_1^{2-m^\star}\norm{h^\star}_{m^\star}^{m^\star} \\ 
& + C(R)\mu^{-d}\norm{h^\star}_1. 
\end{align*}
By supercriticality, we have $\alpha = dm^\star - d > dm - d$, 
and so for $\epsilon < \theta$, we take $\lambda \rightarrow \infty$ to conclude that for all values of the mass $M > 0$ we have $\inf_{\mathcal{Y_M}}\F = -\infty$. Moreover, since $h^\star \in C_c^\infty$, the second moment of $h_\lambda$ goes to zero and $h_\lambda$ converges to the Dirac delta mass in the sense of measures.
\end{proof}
\begin{proof}({\bf Theorem \ref{thm:SupercritDiff}})
We may justify the formal computations for weak solutions using the regularized problems and taking the limit but we do not include such details.  We treat both bounded and unbounded domains together pointing out the differences when they appear. 
Let 
\begin{equation*}
I(t) = \int \abs{x}^2 u(x,t) dx. 
\end{equation*}
If the domain is bounded then by \eqref{cond:no_flux},  
\begin{align}
\frac{d}{dt}I(t) & = 2d\int A(u) dx + 2 \int \int u(x)u(y) x \cdot \grad \K(x-y) dx dy - \int_{\partial \D} A(u) x\cdot \nu dS \nonumber \\
& = 2d\int A(u) dx + \int \int (x-y)\cdot\grad \K(x-y) u(x)u(y) dx dy - \int_{\partial \D} A(u) x\cdot \nu(x) dS, \label{SM}
\end{align}
where the second integral was obtained by symmetrizing in $x$ and $y$, the time dependence was dropped for notational simplicity and $\nu(x)$ denotes the outward unit normal of $\D$ at $x \in \partial D$.  By translation invariance and convexity of $D$, we may assume without loss of generality that $x \cdot \nu(x) \geq 0$. For the rest of the proof we may treat bounded domains and $\D = \Real^d$ together, since for each,
\begin{equation*}
\frac{d}{dt}I(t) \leq 2d\int A(u) dx + 2 \int \int u(x)u(y) x \cdot \grad \K(x-y) dx dy. 
\end{equation*}

\noindent
We use \textbf{(B2)} on $\K$, to obtain 
\begin{equation*}
\frac{d}{dt}I(t) \leq 2d\int A(u) dx - 2d/p\mathcal{W}(u) + C_1M^2.  
\end{equation*}
By \textbf{(D3)}, \textbf{(B4)} and Lemma \ref{lem:entropy_lowerbd},
\begin{align*}
\int A(u) dx & = \int_{\set{u < R}} A(u) dx + \int_{\set{u > R}} A(u) dx \\
& \leq C(M) + (m-1)\int_{\set{u > R}} \Phi(u) dx \\
& \leq C(M) + (m-1)\int \Phi(u) dx. 
\end{align*} 
Using that $2d(m-1) < 2d(m^\star-1) = 2d/p$ we have,  
\begin{equation*}
\frac{d}{dt}I(t) \leq 2d(m - 1)\F(u) + C(M,C_1).   
\end{equation*}
We use the energy dissipation inequality \eqref{EnrDiss} to bound the first term, 
\begin{equation*}
\frac{d}{dt}I(t) \leq 2d(m - 1)\F(u_0) +  C(M,C_1). 
\end{equation*}
From this differential inequality, the second moment will be zero in finite time and thus the solution blows up in finite time if,
\begin{equation*}
\F(u_0) < -\frac{C(M,C_1)}{2d(m-1)}. 
\end{equation*}
By Lemma \ref{lem:infF_Supercritical}, we may always find initial data with any given mass $M > 0$ such that this is true, since there exists infimizing sequences with vanishing second moments. The final assertion follows from Theorem \ref{thm:Continuation}.  Indeed, we have
\begin{align*}
T_\star \leq -\frac{I(0)}{2d(m-1)\F(u_0)+ C(M,C_1)}.  
\end{align*}

\end{proof}

\subsection{Critical Case: Theorems \ref{thm:SupercritMass} and \ref{thm:SupercritMass_2D}}
\noindent
The proof of Theorem \ref{thm:SupercritMass} follows the proof of Theorem \ref{thm:SupercritDiff}. 

\begin{lemma} \label{lem:infF_criticalMass}
Define $\mathcal{Y_M} = \set{u \in L^1 \cap L^\infty: u\geq 0, \norm{u}_1 = M}$. 
Suppose $\K$ satisfies \textbf{(B1)} and $A(u)$ satisfies \textbf{(B3)} for $m > 1$ and $\overline{A} > 0$.
Suppose further that the problem is critical, that is, $m = m^\star$ and let $M_c$ satisfy \eqref{cond:critical_mass_powerlaw}. 
If $M$ satisfies $M > M_c$, then $\inf_{\mathcal{Y_M}} \F = -\infty$. 
Moreover, there exists an infimizing sequence with vanishing second moments which converges to the Dirac delta mass in the sense of measures.
\end{lemma}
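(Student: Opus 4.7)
The plan is to mirror the proof of Lemma \ref{lem:infF_Supercritical} with a single decisive modification: in the critical regime the entropy and the interaction energy scale at exactly the same rate under mass-preserving concentration, so the sign of the leading-order coefficient, rather than a strict ordering of exponents, must do the work. I would fix $\theta \in (0,1)$ close to one and use density together with Lemma \ref{lem:GHLS} to pick a nonnegative $h^\star \in C_c^\infty \cap \mathcal{Y_M}$ that nearly saturates HLS in the sense
$$\theta\, C_{m^\star} M^{2-m^\star} \norm{h^\star}_{m^\star}^{m^\star} \leq \iint h^\star(x)\,h^\star(y)\,\abs{x-y}^{-d/p}\,dx\,dy.$$
Then I would set $h_\lambda(x) = \lambda^d h^\star(\lambda x)$, which lies in $\mathcal{Y_M}$ for every $\lambda > 0$, has support shrinking to the origin like $O(\lambda^{-1})$, and has vanishing second moment since $\int \abs{x}^2 h_\lambda\,dx = \lambda^{-2}\int \abs{y}^2 h^\star\,dy$.

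The next step is to expand $\F(h_\lambda)$ asymptotically as $\lambda\to\infty$. Integrating \textbf{(B3)} twice yields $\Phi(z)/z^{m^\star} \to \overline{A}/(m^\star-1)$, so for any $\varepsilon>0$ and $R$ sufficiently large,
$$S(h_\lambda) \leq \Bigl(\tfrac{\overline{A}}{m^\star-1} + \varepsilon\Bigr)\lambda^{d(m^\star-1)}\norm{h^\star}_{m^\star}^{m^\star} + o\bigl(\lambda^{d(m^\star-1)}\bigr),$$
the error absorbing the region $\set{h_\lambda \leq R}$, whose Lebesgue measure shrinks with the support. For the interaction, \textbf{(B1)} together with shrinking support forces $\K(x-y) \geq (c-\varepsilon)\abs{x-y}^{-d/p}$ on $\mathrm{supp}(h_\lambda)\times\mathrm{supp}(h_\lambda)$ for $\lambda$ large, and the scaling of the double integral combined with the HLS lower bound above yields
$$\mathcal{W}(h_\lambda) \geq \tfrac{(c-\varepsilon)\theta}{2}\,C_{m^\star} M^{2-m^\star}\,\lambda^{d(m^\star-1)}\norm{h^\star}_{m^\star}^{m^\star}.$$

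At this point the leading-order coefficient of $\F(h_\lambda)$ at order $\lambda^{d(m^\star-1)}$ is
$$\tfrac{\overline{A}}{m^\star-1} + \varepsilon - \tfrac{(c-\varepsilon)\theta}{2}\,C_{m^\star} M^{2-m^\star}.$$
The definition \eqref{cond:critical_mass_powerlaw} of $M_c$, combined with $\lim_{z\to\infty}\Phi(z)/z^{m^\star} = \overline{A}/(m^\star-1)$, gives the identity $\overline{A}/(m^\star-1) = \tfrac{1}{2}\,c\,C_{m^\star} M_c^{2-m^\star}$. Since $M > M_c$ and $2 - m^\star > 0$, one has $M^{2-m^\star} > M_c^{2-m^\star}$; I would then choose $\theta$ sufficiently close to one and $\varepsilon$ sufficiently small to drive the bracket strictly negative. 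This forces $\F(h_\lambda) \to -\infty$, proving $\inf_{\mathcal{Y_M}}\F = -\infty$, and the vanishing second moment and weak-measure convergence $h_\lambda \rightharpoonup M\delta_0$ are immediate from the scaling.

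The main obstacle is the delicate interplay between the three small parameters $\theta$, the $\varepsilon$ coming from \textbf{(B3)}, and the $\varepsilon$ coming from \textbf{(B1)}; all three must be chosen only after $M > M_c$ is fixed, and the subleading contributions (from the low-value region of $h_\lambda$ in the entropy, and from the part of $\K$ where the asymptotic expansion has not kicked in) must genuinely be $o(\lambda^{d(m^\star-1)})$. Compactness of $\mathrm{supp}(h^\star)$ makes this straightforward to verify quantitatively, since the support of $h_\lambda$ has diameter $O(\lambda^{-1})$ and its $L^\infty$ norm grows only polynomially in $\lambda$, so every cross-term is strictly subleading.
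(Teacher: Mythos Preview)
Your proposal is correct and follows essentially the same argument as the paper's proof: both select a near-extremizer $h^\star$ of the HLS inequality, rescale via $h_\lambda(x) = \lambda^d h^\star(\lambda\,\cdot)$, and use \textbf{(B1)}, \textbf{(B3)} together with the identity $\overline{A}/(m^\star-1) = \tfrac{1}{2}cC_{m^\star}M_c^{2-m^\star}$ to make the leading coefficient at order $\lambda^{d(m^\star-1)}$ strictly negative when $M>M_c$. The only cosmetic differences are that the paper fixes $\theta \in \bigl((M_c/M)^{2-m^\star},1\bigr)$ at the outset and carries an auxiliary mass-normalizing parameter $\mu$, whereas you normalize $h^\star$ to mass $M$ from the start and postpone the choice of $\theta$; neither affects the substance.
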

 
\begin{proof}
\noindent
We may proceed as in the proof of Lemma \ref{lem:infF_Supercritical}, but instead choose $\theta \in \left( (M_c/M)^{2-m^\star},1\right)$. 
Let $\alpha = d/p$. 
By optimality of $C_{m^\star}$, as before there exists
 $h^\star$ such that,
\begin{equation}
\theta C_{m^\star} \leq \frac{\abs{\int\int h^\star(x)h^\star(y)\abs{x-y}^{-\alpha}dx dy}}{\norm{h^\star}^{2-m^\star}_1\norm{h^\star}^{m^\star}_{m^\star}} \leq C_{m^\star}. \label{ineq:infF_theta}
\end{equation}
As above, we assume $h^\star \geq 0$ and $h^\star \in C_c^\infty$. \\ 

\noindent
Let $\mu = \norm{h^\star}_1^{1/d}M^{-1/d}$, $\lambda > 0$ and $h_\lambda(x) = \lambda^dh^\star(\lambda \mu x)$. By \textbf{(B1)} and \textbf{(B3)}, $\forall \, \epsilon > 0$ there exists a $\lambda$ and $R$ sufficiently large such that by $h^\star \in C_c^\infty$,
\begin{align*}
\F(h_\lambda)  \leq & \frac{\lambda^{dm - d}M}{(m^\star-1)\norm{h^\star}_1}(\overline{A}+\epsilon)\norm{h^\star}_{m^\star}^{m^\star} + C(R)\mu^{-d}\norm{h^\star}_1 \\
& - \frac{(\theta - \epsilon) C_{m^\star}}{2}\left(\frac{\norm{h^\star}_1}{M}\right)^{-2 + \alpha/d}\lambda^{\alpha} \norm{h^\star}_1^{2-m^\star}\norm{h^\star}_{m^\star}^{m^\star} 
\end{align*}
However in this case $\alpha = dm - d$ and  $m = m^\star$, therefore by \eqref{ineq:infF_theta} and Lemma \ref{lem:GHLS},
\begin{equation*}
\F(h_\lambda) \leq \lambda^{dm^\star - d}\norm{h^\star}_{m^\star}^{m^\star}\left[ \frac{M(\overline{A}+\epsilon)}{(m^\star-1)\norm{h^\star}_1} - \frac{(\theta - \epsilon) C_{m^\star}}{2}\left(\frac{\norm{h^\star}_1}{M}\right)^{-2 + \alpha/d}\norm{h^\star}_1^{2-m^\star} \right].
\end{equation*}
Then, 
\begin{equation*}
\F(h_\lambda) \leq \lambda^{dm^\star - d}\frac{\norm{h^\star}_{m^\star}^{m^\star}}{\norm{h^\star}_1}\left[ \frac{M(\overline{A}+\epsilon)}{(m^\star-1)} - \frac{(\theta - \epsilon)}{2}C_{m^\star} M^{2 - \alpha/d} \right].
\end{equation*}
Then since $\overline{A}/(m^\star-1) = C_{m^\star} M_c^{2-m^\star}/2$ and $\alpha/d - 1 = 2-m^\star$ we have,
\begin{equation*}
\F(h_\lambda) \leq \lambda^{dm^\star - d}\frac{\norm{h^\star}_m^m}{2\norm{h^\star}_1}C_{m^\star} M^{2-\alpha/d}\left[ \left(1 + \frac{\epsilon}{\overline{A}}\right)\left(\frac{M_c}{M}\right)^{2-m^\star} - (\theta - \epsilon) \right].
\end{equation*}
Since $\theta > (M_c/M)^{2-m^\star}$ we may take $\epsilon$ sufficiently small and $\lambda \rightarrow \infty$ to conclude that $\inf_{\mathcal{Y_M}} \F = -\infty$. 
As before, $h_\lambda$ converges to the Dirac delta mass in the sense of measures. 
\end{proof}
\noindent

\begin{proof} (Theorem \ref{thm:SupercritMass}) 
The theorem follows from a Virial identity as in Theorem \ref{thm:SupercritDiff}. 
\end{proof}

\begin{proof} (Theorem \ref{thm:SupercritMass_2D})
As in Theorem \ref{thm:SupercritDiff} we have by \textbf{(C2)}, \textbf{(C3)} and if $D$ is bounded, the convexity of the domain,
\begin{align*}
\frac{d}{dt}I(t) & \leq 2d\overline{A}\int A(u) dx + \int\int u(x) u(y) (x-y)\cdot \grad \K(x-y) dx dy  \\
& \leq 2dM\left( \overline{A} - \frac{cM}{2d} \right) + C_1M^{3/2}I^{1/2}. 
\end{align*}
Clearly, if $M > M_c$ then $I \rightarrow 0$ in finite time
if $I(0)$ is sufficiently small. 
\end{proof}

\section{Conclusion}
\noindent
The prior treatments of \eqref{AE} have restricted attention to either very singular kernels (PKS) 
or very smooth kernels (as in \cite{BertozziSlepcev10}). Moreover, most work has been restricted to power-law diffusion. 
We extend these approaches to develop a unified theory which applies to general nonlinear, degenerate diffusion and attractive kernels which are no more singular than the Newtonian potential. 
Existence arguments may apply to more singular kernels or unbounded initial data, however, to the authors' knowledge, 
Lemma \ref{lem:CZ} or something analogous must be available for any known uniqueness argument to hold.
We generalize the existing notions of criticality for PKS and show that the critical mass phenomenon observed in PKS is a generic property of critical aggregation diffusion models.
We extend the free energy methods of \cite{Dolbeault04,BlanchetEJDE06,Blanchet09} to obtain the sharp critical mass for a class of models
with general nonlinear diffusion and inhomogeneous kernels. In particular, we
show that the critical mass depends only on the singularity of the kernel 
at the origin and the growth of the entropy at infinity. The results presented here hold on bounded, convex domains for $d\geq 2$ and on $\Real^d$ for $d \geq 3$. 

\section{Appendix}

\subsection{Auxiliary Lemmas}

\begin{lemma} \label{L6}
Let $F$ be a convex $C^1$ function and $f=F'$.  Assume that $f(u)\in L^2(0,T,H^1(D)),\;u\in H^1(0,T,H^{-1}(D))$ and $F(u) \in L^\infty (0,T,L^1(D))$.  Then for almost all $0\leq s,\tau, \leq T$ the following holds:
\begin{align*}
\int \left(F(u(x,\tau))-F(u(x,s))\right)\;dx = \int_s^\tau \left\langle u_t, f(u(t))\right\rangle\;dt.   
\end{align*}
\end{lemma}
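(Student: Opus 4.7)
The plan is to exploit the sub- and super-tangent inequalities for the convex function $F$, namely
\begin{equation*}
f(b)(a-b) \leq F(a) - F(b) \leq f(a)(a-b),
\end{equation*}
and sandwich $\int\left( F(u(\tau)) - F(u(s)) \right) dx$ between two Riemann sums, both of which converge to $\int_s^\tau \langle u_t(r), f(u(r)) \rangle\, dr$. Since $u \in H^1(0,T; H^{-1}(D))$, $u$ admits an absolutely continuous representative in $H^{-1}(D)$ and $u(\tau) - u(s) = \int_s^\tau u_t(r)\, dr$ as an identity in $H^{-1}(D)$ for all $s, \tau \in [0,T]$.

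First I would fix a subset $E \subset [0,T]$ of full measure along which the pointwise-in-time values $F(u(t)) \in L^1(D)$ and $f(u(t)) \in H^1(D)$ are defined, and on which every point is a Lebesgue point of the Banach-space-valued map $r \mapsto f(u(r))$ from $[0,T]$ into $H^1(D)$. For $s, \tau \in E$ with $s < \tau$ and any $N$, partition $[s,\tau]$ into $N$ subintervals with endpoints $s = t_0 < t_1 < \cdots < t_N = \tau$ chosen from $E$. Applying the convexity inequalities pointwise in $x$, integrating over $D$, and telescoping the middle sum yields
\begin{equation*}
\sum_{i=0}^{N-1} \int f(u(t_i))(u(t_{i+1}) - u(t_i))\, dx \leq \int \left( F(u(\tau)) - F(u(s)) \right) dx \leq \sum_{i=0}^{N-1} \int f(u(t_{i+1}))(u(t_{i+1}) - u(t_i))\, dx.
\end{equation*}
Using the $H^{-1}$-continuous representative of $u$, each summand on the left rewrites as
\begin{equation*}
\int f(u(t_i))(u(t_{i+1}) - u(t_i))\, dx = \langle u(t_{i+1}) - u(t_i), f(u(t_i)) \rangle = \int_{t_i}^{t_{i+1}} \langle u_t(r), f(u(t_i)) \rangle\, dr,
\end{equation*}
so the lower sum equals $\int_s^\tau \langle u_t(r), \tilde f_N(r)\rangle\, dr$, with $\tilde f_N(r) := f(u(t_i))$ for $r \in [t_i, t_{i+1})$.

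As the mesh of the partition tends to zero, $\tilde f_N \to f(u)$ in $L^2(s, \tau; H^1(D))$ by Lebesgue differentiation along the chosen partition points, and since $u_t \in L^2(0, T; H^{-1}(D))$, Cauchy--Schwarz in $L^2_t(H^1_x) \times L^2_t(H^{-1}_x)$ lets the duality pairing pass to the limit, giving $\int_s^\tau \langle u_t, f(u)\rangle\, dr$. The upper Riemann sum is handled identically, squeezing the middle term to the desired identity for all $s, \tau \in E$, hence for a.e.~$s,\tau \in [0,T]$.

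The main obstacle will be justifying the convergence $\tilde f_N \to f(u)$ in $L^2(s, \tau; H^1(D))$; this forces us to draw the partition endpoints from the full-measure set of Lebesgue points of the Bochner-measurable map $f(u)$, which is the step where the hypothesis $f(u) \in L^2(0,T; H^1)$ is really exploited. A secondary technicality is that $F(u)$ and $f(u)$ must be interpreted through one fixed measurable representative of $u$, but this is legitimate because the hypotheses $F(u) \in L^\infty(0,T; L^1(D))$ and $f(u) \in L^2(0,T; H^1(D))$ already presuppose such a representative.
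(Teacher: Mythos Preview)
The paper does not actually prove this lemma; it is listed in the appendix as an auxiliary result and the proof is deferred to Bertozzi--Slep\v{c}ev \cite{BertozziSlepcev10} (see the sentence preceding \eqref{ContofF} in the body of the paper). Your outline---sandwich $F(u(\tau))-F(u(s))$ using the convexity inequalities $f(b)(a-b)\le F(a)-F(b)\le f(a)(a-b)$, telescope over a partition, rewrite each increment via the $H^{-1}$-absolutely-continuous representative of $u$ as $\int_{t_i}^{t_{i+1}}\langle u_t, f(u(t_i))\rangle\,dr$, and pass to the limit using $u_t\in L^2(0,T;H^{-1})$ together with the $L^2(0,T;H^1)$ convergence of the step approximations of $f(u)$---is exactly the standard argument for this type of chain rule (it is essentially how the cited reference, and more generally Alt--Luckhaus and V\'azquez, proceed).

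One technical point is underdeveloped. You invoke the pointwise inequality and integrate over $D$, writing $\int f(u(t_i))(u(t_{i+1})-u(t_i))\,dx$ as a Lebesgue integral before identifying it with the $H^{-1}$--$H^1$ pairing. The stated hypotheses do not immediately give $f(u(t_i))\,u(t_{i+1})\in L^1(D)$ for $t_i\neq t_{i+1}$; the convexity bound only yields one-sided integrability of the product. In the paper's application this is a non-issue because $u\in L^\infty(D_T)$, so the products are bounded and the integral coincides with the duality pairing; in the abstract lemma you should either flag this or supply the short argument that the one-sided $L^1$ bound plus finiteness of the pairing forces the integral to be finite and equal to the pairing. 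Apart from this, your sketch is correct and complete.
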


\begin{lemma}\label{L9}
Let $F(u,t)\in C^2([0,\infty),[0,\infty))$ be a convex function such that $F(0)=0$ and $F''>0$ on $(0,\infty)$.  Let $f_n$, for $n=1,2,...,$ and $f$ be a non-negative function on $D$ bounded from above by $M>0$.  Furthermore, assume that $f_n\rightharpoonup f\;\text{in}\;L^1(D)$ and $F(f_n) \rightarrow F(f)\; \text{in}\;L^1(D)$, then $\left\|f_n-f\right\|_{L^2(D)}\rightarrow 0$ as $n\rightarrow 0$.  
\end{lemma}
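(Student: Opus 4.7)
The plan is to control $f_n - f$ through the Bregman divergence associated with $F$,
\begin{equation*}
D_F(a,b) := F(a) - F(b) - F'(b)(a-b),
\end{equation*}
which is nonnegative by convexity and, since $F'' > 0$ on $(0,\infty)$ makes $F$ strictly convex on $[0,\infty)$, satisfies $D_F(a,b) > 0$ whenever $a \neq b$. The key reduction is: if $\int_D D_F(f_n, f)\,dx \to 0$, then along a subsequence $D_F(f_n,f) \to 0$ pointwise a.e., which forces $f_n \to f$ a.e., and then since $0 \leq f_n, f \leq M$ the bounded convergence theorem yields $\|f_n - f\|_{L^2(D)} \to 0$ on that subsequence; a standard subsequence-of-a-subsequence argument promotes this to convergence of the full sequence.

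To establish that $\int_D D_F(f_n,f)\,dx \to 0$, I would expand
\begin{equation*}
\int_D D_F(f_n,f)\,dx \;=\; \int_D F(f_n)\,dx \;-\; \int_D F(f)\,dx \;-\; \int_D F'(f)(f_n - f)\,dx.
\end{equation*}
The first difference tends to $0$ by the assumed strong convergence $F(f_n) \to F(f)$ in $L^1(D)$. For the third term, the uniform bound $0 \leq f \leq M$ together with $F \in C^2([0,\infty))$ gives $F'(f) \in L^\infty(D)$, so testing the weak $L^1$ convergence $f_n \rightharpoonup f$ against the bounded function $F'(f)$ yields $\int_D F'(f)(f_n - f)\,dx \to 0$.

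The main subtlety is that the hypothesis $F'' > 0$ on $(0,\infty)$ does not preclude $F''(0) = 0$, so one cannot immediately invoke a quadratic coercivity bound of the form $D_F(a,b) \geq c(a-b)^2$ on $[0,M]$ to obtain $L^2$ control directly from $L^1$ control of the Bregman divergence. This is precisely why the argument is routed through almost-everywhere convergence: strict convexity alone (together with $D_F(a,b) \geq 0$) suffices to identify the unique zero $a = b$, and the uniform $L^\infty$ bound $M$ supplies a dominating function that converts a.e. convergence into $L^2$ convergence. No further regularity of $F$ beyond $C^2$ and strict convexity is needed.
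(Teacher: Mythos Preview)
Your argument is correct. The paper does not actually supply a proof of this lemma; it is stated in the Appendix as an auxiliary result and attributed to \cite{BertozziSlepcev10}, so there is no in-paper proof to compare against.

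A few remarks on your write-up. The Bregman-divergence decomposition is exactly the right idea: the two hypotheses (strong $L^1$ convergence of $F(f_n)$ and weak $L^1$ convergence of $f_n$, tested against $F'(f)\in L^\infty$) kill the two pieces of $\int_D D_F(f_n,f)\,dx$, and nonnegativity of $D_F$ then forces $D_F(f_n,f)\to 0$ in $L^1$. Your handling of the passage from $D_F(f_{n_k}(x),f(x))\to 0$ to $f_{n_k}(x)\to f(x)$ is also fine: boundedness in $[0,M]$ plus continuity of $a\mapsto D_F(a,b)$ and the strict-convexity fact that $D_F(a,b)=0$ iff $a=b$ give the pointwise limit without needing any lower bound on $F''$.

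One small point worth making explicit: the ``bounded convergence theorem'' step uses that $|D|<\infty$, since the dominating constant $4M^2$ is only integrable on a finite-measure set. In the paper this lemma is invoked only in the bounded-domain existence proof (Theorem~\ref{LEADD}), so the assumption is implicit in context, but it would be cleaner to state it. If one wanted the result on $\Real^d$, an extra tightness argument would be needed; as written, your proof is complete for bounded $D$, which is all the paper requires.
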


\begin{lemma} [Weak Lower-semicontinuity]\label{WLS}
Let $\rho_\epsilon$ be non-negative $L^1_{loc}(\pd)$ and $f_\epsilon$ a vector valued function in $L^1_{loc}(\pd)$ such that  $\forall \phi\in C_c^\infty(\overline{\pd}) and \xi\in C_c^\infty(\overline{\pd}, \Real^d)$
\begin{align*}
\intt \rho_\epsilon \phi dxdt &\rightarrow \intt \rho\phi dxdt \\
\intt f_\epsilon \cdot \xi dxdt &\rightarrow \intt f\cdot \xi dxdt.  
\end{align*}
\noindent Then
\begin{align*}
\intt \frac{1}{\rho} \abs{f}^2 dxdt \leq \liminf_{\epsilon\rightarrow 0} \intt \frac{1}{\rho_\epsilon} \abs{f_\epsilon}^2 dxdt 
\end{align*}
\end{lemma}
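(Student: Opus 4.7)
The strategy is to exploit the joint convexity of $G(\rho, f) := |f|^2/\rho$ (extended by $G(0,0) = 0$ and $G(0, f) = +\infty$ for $f \neq 0$) via its Legendre dual representation. A direct calculation shows that for every $\rho \geq 0$ and $f \in \Real^d$,
\begin{equation*}
G(\rho, f) \;=\; \sup\bigl\{\, 2\, a \cdot f + b\, \rho \;:\; (a, b) \in \Real^d \times \Real,\ b + |a|^2 \leq 0 \,\bigr\};
\end{equation*}
indeed, for $\rho > 0$ one first optimizes in $b = -|a|^2$ to obtain $2 a \cdot f - \rho |a|^2$, which is then maximized at $a = f/\rho$, yielding $|f|^2/\rho$; the case $\rho = 0$ is recovered by sending $|a| \to \infty$ in the direction of $f$.

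For each pair of smooth compactly supported test functions $(\xi, \phi) \in C_c^\infty(\overline{D_T}; \Real^d) \times C_c^\infty(\overline{D_T})$ satisfying the pointwise constraint $\phi + |\xi|^2 \leq 0$, the representation yields the pointwise bound $G(\rho_\epsilon, f_\epsilon) \geq 2\, \xi \cdot f_\epsilon + \phi\, \rho_\epsilon$ a.e., which upon integration over $D_T$ and taking $\liminf_{\epsilon \to 0}$ gives, by the hypothesized weak convergences,
\begin{equation*}
\liminf_{\epsilon \to 0} \int_{D_T} \frac{|f_\epsilon|^2}{\rho_\epsilon}\, dx\, dt \;\geq\; \int_{D_T} \bigl( 2\, \xi \cdot f + \phi\, \rho \bigr)\, dx\, dt.
\end{equation*}
Taking the supremum over admissible $(\xi, \phi)$ on the right-hand side would then deliver $\int_{D_T} G(\rho, f)\, dx\, dt$ and complete the proof.

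The main technical obstacle is precisely this final commutation of supremum and integral, since the formal pointwise maximizers $\xi^\star = f/\rho$ and $\phi^\star = -|f/\rho|^2$ are merely measurable and potentially unbounded where $\rho$ is small. The plan is to resolve this via a standard truncate-and-mollify scheme: set $\xi^\star_k := (f/\rho)\, \mathbf{1}_{\{\rho \geq 1/k,\ |f|/\rho \leq k\}}$, mollify to obtain $\xi^\star_{k,\delta} \in C_c^\infty$, and take $\phi^\star_{k,\delta} := -|\xi^\star_{k,\delta}|^2 - \eta$ with $\eta > 0$ small to enforce the constraint strictly. Sending first $\delta \to 0$, then $\eta \to 0$, and finally $k \to \infty$, monotone convergence recovers $\int_{D_T} G(\rho, f)\, dx\, dt$. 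Alternatively, one may appeal directly to Rockafellar's theorem on integral functionals of normal convex integrands (see Ekeland--Temam), which provides the identity $\int_{D_T} G(\rho, f)\, dx\, dt = \sup_{(\xi, \phi)} \int_{D_T} (2\, \xi \cdot f + \phi\, \rho)\, dx\, dt$ for $(\rho, f) \in L^1_{loc}$ directly. Either route finishes the argument.
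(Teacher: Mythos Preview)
Your duality argument via the Legendre representation of the jointly convex integrand $(\rho,f)\mapsto |f|^2/\rho$ is correct and is the standard route to this Benamou--Brenier-type lower semicontinuity. Note, however, that the paper does not supply its own proof of this lemma: it is recorded in the Appendix as an auxiliary result, and in the body of the paper it is attributed to Otto \cite{Otto01} (via Lemma~10 of \cite{CarrilloEntDiss01}). So there is no in-paper argument to compare against; your proof is essentially a reconstruction of the cited result.

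One small technical slip worth fixing: your proposed $\phi^\star_{k,\delta} = -|\xi^\star_{k,\delta}|^2 - \eta$ with $\eta>0$ a constant is not compactly supported, hence not an admissible test function. The repair is immediate: since $|\xi|^2 \in C_c^\infty(\overline{D_T})$ whenever $\xi$ is, simply take $\phi = -|\xi|^2$ exactly (the constraint then holds with equality), reducing the final step to
\[
\sup_{\xi \in C_c^\infty(\overline{D_T};\Real^d)} \int_{D_T} \bigl( 2\,\xi\cdot f - |\xi|^2\,\rho \bigr)\,dx\,dt \;=\; \int_{D_T} \frac{|f|^2}{\rho}\,dx\,dt,
\]
which your truncate-and-mollify scheme (or the Rockafellar/Ekeland--Temam normal-integrand theorem you cite) handles directly. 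With that adjustment the argument is complete.
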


\subsection{Gagliardo-Nirenberg-Sobolev Inequality}
Gagliardo-Nirenberg-Sobolev inequalities are the main tool for obtaining $L^p$ estimates of PKS models and are used in many works, for instance \cite{Kowalczyk05,Blanchet09,SugiyamaADE07,JagerLuckhaus92}. The following inequality follows by interpolation and the classical Gagliardo-Nirenberg-Sobolev inequality.
\begin{lemma}[Inhomogeneous Gagliardo-Nirenberg-Sobolev] \label{lem:GNS}
Let $d \geq 2$ and $D \subset \Real^d$ satisfy the cone condition (see e.g. \cite{Adams03}). 
Let $f:D \rightarrow \Real$ satisfy $f \in L^p\cap L^q$ and $\grad f^k \in L^r$. Moreover let $1 \leq p \leq rk \leq dk$, $k < q < rkd/(d-r)$ and
\begin{equation}
\frac{1}{r} - \frac{k}{q} - \frac{s}{d} < 0. \label{cond:GNS}
\end{equation}
Then there exists a constant $C_{GNS}$ which depends on $s,p,q,r,d$ and the dimensions of the cone for which $D$ satisfies the cone condition such that 
\begin{equation}
\norm{f}_{L^q} \leq C_{GNS}\norm{f}^{\alpha_2}_{L^p} \norm{f^k}^{\alpha_1}_{W^{s,r}}, \label{eq:GNS}
\end{equation}
where $0 < \alpha_i$ satisfy
\begin{equation}
1 = \alpha_1 k + \alpha_2,
\end{equation}
and
\begin{equation}
\frac{1}{q} - \frac{1}{p} = \alpha_1(\frac{-s}{d} + \frac{1}{r} - \frac{k}{p}).
\end{equation}
\end{lemma}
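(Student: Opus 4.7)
\medskip

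The plan is to reduce the stated inequality to the classical Gagliardo--Nirenberg--Sobolev inequality on domains satisfying the cone condition by means of a simple power substitution. Specifically, set $g := f^{k}$, noting that the hypotheses $f\in L^{p}\cap L^{q}$ together with $p\le rk$ ensure $g\in L^{p/k}\cap L^{q/k}$ with $p/k\le r$ and $q/k>1$, and by hypothesis $g\in W^{s,r}$. The key observation is the pointwise identity $\norm{f}_{L^{t}}=\norm{g}_{L^{t/k}}^{1/k}$ for any $t\ge k$, so that proving a standard GNS bound on $g$ will translate directly, after raising to the $1/k$ power, into the inhomogeneous inequality (\ref{eq:GNS}).

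Next, I would invoke the classical Gagliardo--Nirenberg--Sobolev inequality for functions on a domain $D$ with the cone condition (see e.g. Theorem 5.8 of \cite{Adams03}, or the interpolation argument combining Sobolev embedding with H\"older): there exists $C=C(s,p,q,r,d,\textup{cone})>0$ such that
\begin{equation*}
\norm{g}_{L^{q/k}}\le C\,\norm{g}_{W^{s,r}}^{\beta}\,\norm{g}_{L^{p/k}}^{1-\beta},
\end{equation*}
provided that $p/k\le q/k$, that $q/k$ does not exceed the Sobolev exponent $rd/(d-rs)$ (which is precisely the bound $q<rkd/(d-r)$ with $s=1$, and the condition \eqref{cond:GNS} $\tfrac{1}{r}-\tfrac{k}{q}-\tfrac{s}{d}<0$ in general), and with the exponent $\beta\in(0,1)$ determined by the scaling identity
\begin{equation*}
\frac{k}{q}=\beta\!\left(\frac{1}{r}-\frac{s}{d}\right)+(1-\beta)\frac{k}{p}.
\end{equation*}
The strict inequality in \eqref{cond:GNS} is what guarantees $\beta\in(0,1)$ rather than a degenerate endpoint.

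Finally, I raise the displayed inequality to the $1/k$ power. Using $\norm{g}_{L^{q/k}}^{1/k}=\norm{f}_{L^{q}}$ and $\norm{g}_{L^{p/k}}^{1/k}=\norm{f}_{L^{p}}$, and identifying $\alpha_{1}:=\beta/k$ and $\alpha_{2}:=1-\beta$, one checks immediately $\alpha_{1}k+\alpha_{2}=1$. Moreover, subtracting $k/p$ from both sides of the scaling identity for $\beta$ and dividing by $k$ yields
\begin{equation*}
\frac{1}{q}-\frac{1}{p}=\frac{\beta}{k}\!\left(\frac{1}{r}-\frac{s}{d}-\frac{k}{p}\right)=\alpha_{1}\!\left(\frac{-s}{d}+\frac{1}{r}-\frac{k}{p}\right),
\end{equation*}
matching the exponent relation demanded by the lemma. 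Taking $C_{GNS}:=C^{1/k}$ completes the proof.

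There is essentially no hard step here: the content is entirely in the correct bookkeeping of exponents. The only mild subtlety is verifying that the hypotheses $1\le p\le rk\le dk$ and $k<q<rkd/(d-r)$ are exactly what is needed to place $q/k$ strictly between $p/k$ and the Sobolev exponent of $W^{s,r}$, so that the classical inequality applies with a non-degenerate interpolation parameter $\beta$; this is a direct algebraic check. The dependence of $C_{GNS}$ on the cone parameters (rather than on $\textup{diam}\,D$) is inherited from the classical inequality in \cite{Adams03}, which is precisely the feature used in Lemma \ref{LinftyG} to take uniform limits as $n\to\infty$ in the local existence proof on $\Real^{d}$.
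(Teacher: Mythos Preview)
Your proof is correct and uses the same core idea as the paper: substitute $g=f^{k}$ and reduce to the classical Gagliardo--Nirenberg--Sobolev inequality on domains satisfying the cone condition. The paper's argument is slightly more circuitous---it introduces an auxiliary exponent $\beta\in(\max(q,rk),\,rkd/(d-r))$, first applies GNS in the special form $\|f^{k}\|_{L^{\beta/k}}\lesssim\|f^{k}\|_{L^{r}}^{1-\theta}\|f^{k}\|_{W^{s,r}}^{\theta}$ (matching integrability on both sides), and then performs two separate H\"older interpolations to pass from $L^{r}$ down to $L^{p/k}$ and from $L^{\beta}$ down to $L^{q}$---whereas you apply the general interpolation form of GNS between $L^{p/k}$ and $W^{s,r}$ in a single step, which is cleaner.
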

\begin{proof}
We may assume that $f$ is Schwartz then argue by density. 
Let $\beta$ satisfy $\max(q,rk) < \beta < rkd/(d-r)$. First note by the Gagliardo-Nirenberg-Sobolev inequality, [Theorem 5.8, \cite{Adams03}], we have
\begin{align*}
\norm{f^k}_{\beta/k} & \lesssim_{\beta,k,r,s} \norm{f^k}^{1-\theta}_{r}\norm{f^k}^{\theta}_{W^{s,r}} \\
& \leq \norm{f^k}^{(1-\theta)(1-\mu)}_{p/k}\norm{f^k}^{(1-\theta)\mu}_{\beta/k}\norm{f^k}^{\theta}_{W^{s,r}},
\end{align*}
for $\mu \in (0,1)$ determined by interpolation and $\theta = s^{-1}(d/r - dk/\beta) \in (0,1)$.
Moreover, the implicit constant does not depend directly on the size of the domain. 
Therefore, 
\begin{equation*}
\norm{f^k}_{\beta/k} \lesssim \norm{f}_{p}^{(1-\theta)(1-\mu)/(1-\mu(1-\theta))}\norm{f^k}^{\theta/(1-\mu(1-\theta))}_{W^{s,r}}. 
\end{equation*}
Now, where $\lambda \in (0,1)$ determined by interpolation, 
\begin{align*}
\norm{f}_q & \leq \norm{f}_p^{(1-\lambda)}\norm{f^k}_{\beta/k}^{\lambda/k} \\
& \lesssim \norm{f}_{p}^{(1-\lambda) + (1-\theta)(1-\mu)/(1-\mu(1-\theta))}\norm{f^k}^{\lambda\theta/(k-k\mu(1-\theta))}_{W^{s,r}}.
\end{align*} 
\end{proof}

\subsection{Admissible Kernels} \label{Apx:Kernels}
\noindent
We now prove Lemmas \ref{lem:Newt},\ref{lem:CZ} and \ref{lem:CZ_HighReg}. We begin with the following characterizations of $\wkspace{p}$.
\begin{lemma} \label{lem:NotLwp}
Let $F(x) = f(\abs{x}) \in L_{loc}^1 \cap C^0\setminus\set{0}$ be monotone in a neighborhood of the origin. 
If $r^{-d/p} = o(f(r))$ as $r \rightarrow 0$, then $F \notin \wkspace{p}_{loc}$.
\end{lemma}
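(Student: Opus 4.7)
The plan is to argue by contradiction. Suppose $F \in \wkspace{p}_{loc}$; then for any small ball $B_\delta \subset \Real^d$ containing the origin, there is a constant $C$ such that
\begin{equation*}
\alpha^p \abs{\set{x \in B_\delta : F(x) > \alpha}} \leq C \quad \text{for all } \alpha > 0.
\end{equation*}
The first step is to use the hypothesis $r^{-d/p} = o(f(r))$ to extract the structural information that $f$ must be decreasing on some $(0,\delta)$ with $f(r) \to \infty$ as $r \to 0$. This follows since $f(r) r^{d/p} \to \infty$ forces $f$ unbounded near $0$, ruling out the case where the monotonicity near $0$ is non-decreasing (that would make $f$ bounded above on $(0,\delta)$). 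Continuity of $f$ on $(0,\delta)$ together with the fact that $f$ is strictly decreasing down to $0$ (one may shrink $\delta$) gives, for every sufficiently large $\alpha$, a unique $r_\alpha \in (0,\delta)$ with $f(r_\alpha) = \alpha$, and $r_\alpha \to 0$ as $\alpha \to \infty$.

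Next, I would compute the distribution function. By radial symmetry and monotonicity, for $\alpha$ large enough,
\begin{equation*}
\set{x \in B_\delta : F(x) > \alpha} = B_{r_\alpha},
\end{equation*}
so $\abs{\set{x \in B_\delta : F(x) > \alpha}} = \omega_d r_\alpha^d$, where $\omega_d$ denotes the volume of the unit ball. Substituting into the weak-$L^p$ bound and using $\alpha = f(r_\alpha)$ yields
\begin{equation*}
f(r_\alpha)^p\, r_\alpha^d \leq \frac{C}{\omega_d} \quad \text{for all large } \alpha.
\end{equation*}

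The final step is to contradict the hypothesis. As $\alpha \to \infty$ we have $r_\alpha \to 0$, so by assumption $f(r_\alpha) r_\alpha^{d/p} \to \infty$, which is equivalent to $f(r_\alpha)^p r_\alpha^d \to \infty$. This directly contradicts the bound just derived, completing the proof. The only step requiring any care is the extraction of monotone decreasing behavior and existence of $r_\alpha$ from the hypotheses $F \in L^1_{loc} \cap C^0\setminus\set{0}$ and monotonicity near the origin, but this is essentially immediate from $f(r) \to \infty$.
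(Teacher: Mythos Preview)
Your proof is correct and follows essentially the same approach as the paper: both identify the super-level set $\set{F > \alpha}$ as a ball of radius $r(\alpha)$ via monotonicity and radial symmetry, compute $\lambda_F(\alpha) = \omega_d r(\alpha)^d$, and then use the hypothesis $r^{-d/p} = o(f(r))$ to force $\alpha^p \lambda_F(\alpha) \to \infty$. The only cosmetic difference is that you frame it as a contradiction with an assumed weak-$L^p$ bound, whereas the paper argues directly that $\alpha^p \lambda_F(\alpha) \gtrsim \epsilon^{-p}$ for every $\epsilon > 0$; your extra care in deducing that $f$ must be decreasing and blow up at the origin is a detail the paper leaves implicit.
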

\begin{proof}
Since we have assumed $f$ to be monotone in a neighborhood of the origin, without loss of generality we prove the assertions assuming $f \geq 0$ on that neighborhood, since
corresponding work may be done if $f$ is negative. For any $\alpha > 0$, by monotonicity, we have a unique $r(\alpha)$ such that $f(r) > \alpha, \forall r < r(\alpha)$.
We thus have that $\lambda_f(\alpha) = \omega_dr(\alpha)^d$, where $\omega_d$ is the volume of the unit sphere in $\Real^d$. By the growth condition on $f$ and continuity we also have that for $\alpha$ sufficiently large,
\begin{equation}
\frac{1}{\epsilon} r(\alpha)^{-d/p} \leq f(r(\alpha)) = \alpha. \label{eq:dist_littleoh}
\end{equation}
Now, 
\begin{equation*}
\alpha^p\lambda_f(\alpha) = \omega_d\alpha^pr(\alpha)^d.
\end{equation*}
Hence, by (\ref{eq:dist_littleoh}) we have $\forall \,\epsilon > 0$ there is a neighborhood of infinity such that,
\begin{equation*}
\omega_d\alpha^pr(\alpha)^d \gtrsim \epsilon^{-p}.
\end{equation*}
We take $\epsilon \rightarrow 0$ to deduce that $F \notin \wkspace{p}$. 

\end{proof}

\begin{lemma} \label{lem:LwpChar}
Let $F(x) = f(\abs{x}) \in L_{loc}^1\cap C^0\setminus\set{0}$ be monotone in a neighborhood of the origin. 
Then $f \in \wkspace{p}_{loc}$ if and only if $f = \mathcal{O}(r^{-d/p})$ as $r \rightarrow 0$.  
\end{lemma}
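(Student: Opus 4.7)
The plan is to reduce immediately to the case in which $f$ is non-increasing on some interval $(0,\delta)$ and $\lim_{r\to 0^+}f(r)=\infty$. If $f$ is non-decreasing near $0$, or non-increasing but bounded above, then continuity of $F$ away from the origin forces $F$ to be locally bounded, so both $F\in\wkspace{p}_{loc}$ and $f=\mathcal{O}(r^{-d/p})$ hold trivially.

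Under this reduction, the key observation is that radial monotonicity plus continuity give an \emph{exact} formula for the distribution function of $F$ restricted to a small ball. Choosing $\delta$ so that $f$ is continuous and non-increasing on $(0,\delta)$ with $f(r)\to\infty$ as $r\to 0^+$, for every $\alpha>f(\delta)$ there is a unique $r(\alpha)\in(0,\delta)$ with $f(r(\alpha))=\alpha$, and
$$\lambda_{F\mathbf{1}_{B_\delta}}(\alpha)=\omega_d\, r(\alpha)^d,$$
where $\omega_d=\abs{B_1}$. As $\alpha$ ranges over $(f(\delta),\infty)$, $r(\alpha)$ sweeps out all of $(0,\delta)$, which is what will let the estimate at the level of $\alpha$ be transferred to an estimate at the level of $r$.

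For the $\Rightarrow$ direction I would apply the weak-$L^p$ bound to $F\mathbf{1}_{B_\delta}$: $\alpha^p\lambda(\alpha)\lesssim 1$ becomes $\alpha^p\, r(\alpha)^d\lesssim 1$ for all large $\alpha$. Substituting $\alpha=f(r)$ gives $f(r)^p r^d\lesssim 1$ on $(0,\delta)$, which is exactly $f=\mathcal{O}(r^{-d/p})$. For the $\Leftarrow$ direction, assuming $f(r)\leq Cr^{-d/p}$ on $(0,\delta)$, on any compact $K$ I would split $K=(K\cap B_\delta)\cup(K\setminus B_\delta)$: on the outer piece $F$ is continuous, hence bounded, which embeds into $\wkspace{p}$; on the inner piece $F$ is dominated pointwise by $C\abs{x}^{-d/p}$, whose weak-$L^p$ quasinorm on $B_\delta$ is computed explicitly by running the distribution-function identity in reverse. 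Summing the two contributions gives $F\mathbf{1}_K\in\wkspace{p}$.

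I do not expect a real obstacle; the monotone radial structure collapses the problem to a one-variable computation via the identity for $\lambda_{F\mathbf{1}_{B_\delta}}$. The only minor care needed is to keep every assertion localized to a neighborhood of the origin, so that the behavior of $F$ on the complement of a small ball (where continuity away from $0$ supplies an $L^\infty$ bound) does not interfere with the small-scale asymptotics that the statement is really about.
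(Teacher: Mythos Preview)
Your proposal is correct and rests on the same core observation as the paper: for a radial, monotone function on a small ball, the distribution function is exactly $\lambda(\alpha)=\omega_d\,r(\alpha)^d$, so the weak-$L^p$ condition collapses to a one-variable inequality. Your $\Leftarrow$ direction is essentially identical to the paper's.

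For the $\Rightarrow$ direction you take a more direct route: from $\alpha^p r(\alpha)^d\lesssim 1$ you substitute $\alpha=f(r)$ and read off $f(r)^p r^d\lesssim 1$. The paper instead argues by contrapositive, showing that $f\neq\mathcal{O}(r^{-d/p})$ combined with monotonicity forces $r^{-d/p}=o(f(r))$, and then invokes the separately stated Lemma~\ref{lem:NotLwp} to conclude $f\notin\wkspace{p}_{loc}$. Your argument is shorter and self-contained, avoiding both the auxiliary lemma and the $\mathcal{O}$-vs-$o$ dichotomy; the paper's version has the minor advantage of isolating that dichotomy as a reusable statement. One small point to tighten: the uniqueness of $r(\alpha)$ with $f(r(\alpha))=\alpha$ requires \emph{strict} monotonicity, which is not assumed. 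This is harmless---define $r(\alpha)=\sup\{s:f(s)>\alpha\}$ instead, and for the substitution step take $\alpha\nearrow f(r)$ rather than $\alpha=f(r)$---but it should be stated carefully in a final write-up.
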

\begin{proof}
Since we have assumed $f$ to be monotone in a neighborhood of the origin, without loss of generality we prove the assertions assuming $f \geq 0$ on that neighborhood. \\

\noindent
First assume that $f \neq \mathcal{O}(r^{-d/p})$ as $r \rightarrow 0$, which implies that for all $\delta_0 > 0$ and every $C > 0$ there exists an $r_C < \delta_0$ such that
\begin{equation*}
f(r_C) > Cr_C^{-d/p}. 
\end{equation*}
We now show that in a neighborhood of the origin, the function $f(r) - Cr^{-d/p}$ is strictly positive for $r < r_C$. 
Suppose not. Since both $f,r^{-d/p}$ are monotone, there exists $r_0$ such that $f(r) < Cr^{-d/p}$ for $r < r_0$.
However, this contradicts $f \neq \mathcal{O}(r^{-d/\gamma})$ as $r \rightarrow 0$. Thus, we have that
\begin{equation*}
f(r) > Cr^{-d/p}
\end{equation*}
in a neighborhood of the origin ($r < r_C$). Since for all $C > 0$ we can find a corresponding $r_C$, this is equivalent to $r^{-d/p} = o(f(r))$, and by Lemma \ref{lem:NotLwp} we have that $f \notin \wkspace{p}$. \\

\noindent
On the other hand, if $f = \mathcal{O}(r^{-d/p})$ as $r \rightarrow 0$ there exists $\delta > 0$ and $C > 0$ such that for all $r < \delta$,
\begin{equation}
f(r) \leq Cr^{-d/p}. \label{eq:fbigOh}
\end{equation}
By monotonicity, for all $\alpha > 0$ there is a unique $r(\alpha) \in [0,\delta]$ such that 
\begin{equation}
f(r) > \alpha, \textup{ for } r  < r(\alpha), \label{ineq:frbdd}
\end{equation}
where we take $r(\alpha) = 0$ if $f(r) < \alpha$ over the entire neighborhood. By \eqref{eq:fbigOh} and \eqref{ineq:frbdd}, we have,
necessarily that $r(\alpha) \lesssim \alpha^{-p/d}$. Therefore, 
\begin{equation*}
\alpha^p \lambda_f(\alpha) = \alpha^p\omega_d r(\alpha)^d \lesssim 1, 
\end{equation*}
which implies $f \mathbf{1}_{B_1(0)} \in \wkspace{p}$.  
\end{proof}

\begin{remark}
Similar statements may be made about the decay of $F(x)$ at infinity. 
\end{remark}

\begin{proof}(\textbf{Lemma \ref{lem:Newt}})
By the fundamental theorem of calculus and condition \textbf{(BD)}, 
\begin{align*}
\abs{\partial_{x_i}\partial_{x_j}\K(x)} & \leq \int_{1}^\infty \abs{\partial_r\partial_{x_i}\partial_{x_j}\K(rx)}dr \\
& \lesssim \abs{x}^{-d}. 
\end{align*}
Similarly, this argument also implies $\abs{\grad \K} \lesssim \abs{x}^{1-d}$, which in turn implies $\grad \K \in \wkspace{d/(d-1)}$. 
If $d > 2$ then we can carry out this argument another time and show that $\abs{\K} \lesssim \abs{x}^{2-d}$.
Moreover, in $d = 2$ we see that $\K$ could have, at worst, logarithmic singularities at zero and infinity. 

\end{proof}

\begin{proof}(\textbf{Lemma \ref{lem:CZ}})
We compute second derivatives of the kernel $\K$ in the sense of distributions.
Let $\phi \in C_c^\infty$, then by the dominated convergence theorem,
\begin{align*}
\int \partial_{x_i}\K \partial_{x_j} \phi dx & = \lim_{\epsilon \rightarrow 0} \int_{\abs{x} \geq \epsilon} \partial_{x_i}\K \partial_{x_j}\phi dx \\
& = -\lim_{\epsilon \rightarrow 0}\int_{\abs{x} = \epsilon} \partial_{x_j}\K(x)\frac{x_j}{\abs{x}}\phi(x) dS - \textup{PV} \int \partial_{x_i x_j}\K \phi dx.  
\end{align*}
By $\grad \K \in \wkspace{d/(d-1)}$ and Lemma \ref{lem:LwpChar}, we have $\grad \K = \mathcal{O}(\abs{x}^{1-d})$ as $x \rightarrow 0$. Therefore for $\epsilon$ sufficiently small, there exists $C > 0$ such that,
\begin{align*}
\abs{\int_{\abs{x} = \epsilon}\partial_{x_j}\K(x)\frac{x_j}{\abs{x}}\phi(x) dS} & \leq C{\int_{\abs{x} = \epsilon} \abs{x}^{1-d}\abs{\phi(x)}dS} \\
& = C\int_{\abs{x}= 1}\abs{\epsilon x}^{1-d}\abs{\phi(\epsilon x)} \epsilon^{d-1}dS = C\abs{\phi(0)}. 
\end{align*}
Similarly, we may define $\conv{D^2\K}{\phi}$ and we have,
\begin{equation*}
\norm{\conv{D^2\K}{\phi}}_p \leq C\norm{\phi}_p + \norm{\textup{PV}\int \partial_{x_i x_j}\K(y)\phi(x-y)dy}_p. 
\end{equation*}
Therefore, the first term can be extended to a bounded operator on $L^p$ for $1 \leq p \leq \infty$ by density.
The admissibility conditions \textbf{(R)},\textbf{(BD)} and \textbf{(KN)} are sufficient to apply the Calder\'{o}n-Zygmund inequality [Theorem 2.2 \cite{LittleStein}], 
which implies that the principal value integral in the second term is a bounded linear operator on $L^p$ for all $1 < p < \infty$. Moreover the proof provides an estimate of the operator norms,
\begin{equation*}
\norm{\textup{PV}\int \partial_{x_i,x_j}\K(y) u(x-y) dy}_p \lesssim \left\{
     \begin{array}{lr}
       \frac{1}{p-1}\norm{u}_p & 1 < p < 2 \\
       p \norm{u}_p & 2 \leq p < \infty.
     \end{array}
     \right. 
\end{equation*}

\end{proof}

\begin{proof}(\textbf{Lemma \ref{lem:CZ_HighReg}})
The assertion that $D^2\K \in \wkspace{\gamma}_{loc}$ implies $\K \in \wkspace{d/(d/\gamma - 2)}_{loc}$ follows similarly as in Lemma \ref{lem:Newt}. \\

\noindent
Now we prove the reverse implication. Let $\K \in \wkspace{d/(d/\gamma - 2)}_{loc}$. We show that $D^2\K = \mathcal{O}(r^{-d/\gamma})$ as $r \rightarrow 0$. 
Assume for contradiction that $D^2\K \neq \mathcal{O}(r^{-d/\gamma})$ as $r \rightarrow 0$.
This implies that $k^{\prime\prime} \neq \mathcal{O}(r^{-d/\gamma})$ or that $k^\prime(r)r^{-1} \neq \mathcal{O}(r^{-d/\gamma})$ as $r \rightarrow 0$. These two possibilities are essentially the same, so just 
assume that $k^{\prime\prime} \neq \mathcal{O}(r^{-d/\gamma})$. By monotonicity arguments used in the proof of Lemma \ref{lem:LwpChar}, this in turn implies
$r^{-d/\gamma} = o(k^{\prime\prime})$. However, this means that for all $\epsilon$, there exists a $\delta(\epsilon) > 0$ such that
for $r \in (0,\delta(\epsilon))$ we have,
\begin{align*}
k(r) - k(\delta(\epsilon)) = & -\int^r_{\delta(\epsilon)}k^\prime(s)ds = \int^r_{\delta(\epsilon)}\int^s_{\delta(\epsilon)}k^{\prime\prime}(t)dtds + (r-\delta(\epsilon))k^\prime(\delta(\epsilon)) \\
& \gtrsim \epsilon^{-1} r^{2-d/\gamma} + 1,
\end{align*}
which contradicts the fact that $k(r) = \mathcal{O}(r^{2-d/\gamma})$ as $r \rightarrow 0$ by Lemma \ref{lem:LwpChar}. \\

\noindent
The assertion regarding $\grad \K$ is proved in the same fashion. 
\end{proof}

\section*{Acknowledgments}
This work was supported in part by NSF Grant DMS-0907931 and NSF grant EFRI-1024765.  
The authors would like to thank Jos\'e A. Carrillo, Dong Li, Inwon Kim and Thomas Laurent for their helpful discussions.  We would also like to thank the referee for the careful reading of the paper and the insightful comments.

\vfill\eject
\bibliographystyle{plain}
\bibliography{nonlocal_eqns}

\end{document}